\numberwithin{equation}{section}
\definecolor{Arancio}{cmyk}{0,0.61,0.87,0}
\definecolor{blus}{RGB}{0,102,204}
\newcommand{\brd}[1]{\mathbb{#1}}
\newcommand{\R}{\brd{R}}
\newcommand{\C}{\brd{C}}
\newcommand{\N}{\brd{N}}
\def\O{\Omega}
\newcommand{\norm}[2]{\left\Vert {#1} \right\Vert_{#2}}
\newcommand{\eps}{\varepsilon}
\newcommand{\be}{\begin{equation}}
\newcommand{\ee}{\end{equation}}
\newcommand{\loc}{{\text{\tiny{loc}}}}
\newcommand\intn{- \hspace{-0.40cm} \int}
\newtheorem{teo}{Theorem}[section]
\newtheorem{Corollary}[teo]{Corollary}
\newtheorem{Lemma}[teo]{Lemma}
\newtheorem{Theorem}[teo]{Theorem}
\newtheorem{Proposition}[teo]{Proposition}
\theoremstyle{definition}
\newtheorem{Definition}[teo]{Definition}
\newtheorem{remark}[teo]{Remark}
\pgfplotsset{compat=1.17}
\begin{document}

\title[A priori regularity estimates for equations degenerating on nodal sets]{A priori regularity estimates\\ for equations degenerating on nodal sets}
\thanks{{\bf Acknowledgments.}
The authors are research fellows of Istituto Nazionale di Alta Matematica INDAM group GNAMPA. G.T. and S.V. are supported by the GNAMPA project E5324001950001 \emph{PDE ellittiche che degenerano su variet\`a di dimensione bassa e frontiere libere molto sottili}. S.T. is supported by the PRIN project 20227HX33Z \emph{Pattern formation in nonlinear phenomena}. S.V. is supported by the PRIN project 2022R537CS \emph{$NO^3$ - Nodal Optimization, NOnlinear elliptic equations, NOnlocal geometric problems, with a focus on regularity}.
}
\subjclass[2020] {35B45, 35B65, 35J70, 35B53, 30C62, 35R35}
\keywords{Schauder estimates, boundary regularity, quasiconformal mapping, higher order boundary Harnack principle, singular/degenerate equations, ratios of solutions}

\author{Susanna Terracini, Giorgio Tortone and Stefano Vita}

\address{Susanna Terracini\newline\indent
Dipartimento di Matematica "Giuseppe Peano"
\newline\indent
Universit\`a degli Studi di Torino
\newline\indent
Via Carlo Alberto 10, 10124, Torino, Italy}
\email{susanna.terracini@unito.it}
\address{Giorgio Tortone\newline\indent
Dipartimento di Matematica "Giuseppe Peano"
\newline\indent
Universit\`a degli Studi di Torino
\newline\indent
Via Carlo Alberto 10, 10124, Torino, Italy}
\email{giorgio.tortone@unito.it}
\address{Stefano Vita\newline\indent
Dipartimento di Matematica "Felice Casorati"
\newline\indent
Universit\`a di Pavia
\newline\indent
Via Ferrata 5, 27100, Pavia, Italy}
\email{stefano.vita@unipv.it}

\begin{abstract}
We prove \emph{a priori} and \emph{a posteriori} H\"older bounds and Schauder $C^{1,\alpha}$ estimates for continuous solutions of degenerate elliptic equations with variable coefficients of the form
\begin{equation*}
\mathrm{div}\left(|u|^a A\nabla w\right)=0\qquad\mathrm{in \ }\Omega\subset\R^2,\quad a\in\R,
\end{equation*}
where the weight $u$ is itself a solution to an elliptic equation of the type $\mathrm{div}(A \nabla u) = 0$, with $A$ a Lipschitz-continuous, uniformly elliptic matrix. The function $u$ is allowed to have a nontrivial, possibly singular nodal set.

The estimates are uniform with respect to $u$ within a class of normalized solutions having bounded Almgren frequency.
In the special case $a = 2$, our results apply to the ratio of two solutions to the same elliptic equation sharing a common zero set. Precisely, we prove higher-order boundary Harnack principles on nodal domains, via the derived Schauder estimates for the associated degenerate equations. The results are based upon a fine blow-up argument, a Liouville theorem, and quasiconformal maps. 
\end{abstract}
\maketitle

\section{Introduction}
The classical \emph{boundary Harnack principle} states that, under mild regularity assumptions on the boundary, the ratio of two positive harmonic functions both vanishing on $\partial\Omega\cap B_r$ is bounded in $\Omega\cap B_{r/2}$. This holds true in Lipschitz (\cite{Kem72,Dal77,Anc78}), non tangentially accessible (shortly NTA) (\cite{JerKen}) and H\"older domains (\cite{BanBasBur91,BasBur91}) (see also \cite{DesSav2} for a recent new proof). In the first two cases, the ratio is even H\"older continuous (a similar result holds true in the case of optimal shapes \cite{MaiTorVel}).

When the boundary is more regular, \emph{higher order boundary Harnack principles} are available, ensuring H\"older continuity also of the ratio's derivatives (\cite{DesSav,Kuk,TerTorVit1}). This theory has found a number of applications notably for the obstacle (e.g. \cite{DesSav,Kuk,Zhang1}) and Bernoulli type (e.g \cite{MazTerVel1,MazTerVel2, BMMTV}) free boundary problems.

Recently, these results have started to be extended to \emph{boundary Harnack principles on nodal domains}; these refer to pairs of solutions $u,v$ of the same elliptic PDE with variable coefficients
\begin{equation}\label{equv}
L_A u = \mathrm{div}\left(A\nabla u\right)=\frac{\partial}{\partial x_i}\left(a_{ij}\frac{\partial}{\partial x_j}u\right)=0\qquad\mathrm{in \ }B_1,
\end{equation}
sharing their zero set; that is, $Z(u)\subseteq Z(v)$ where $Z(u):=u^{-1}\{0\}$ lies in the interior of $B_1$. Again, one  seeks comparison principles between $u$ and $v$ that are expressed in terms of local H\"older estimates for their ratio $v/u$, and possibly for its derivatives. A breakthrough in this direction has been given by Logunov and Malinnikova  (\cite{LogMal1}), who showed real analyticity of the quotient $v/u$ in the framework of real analytic operators.

In this paper, we are concerned with the boundary Harnack principles on nodal domains under mild regularity assumptions on the coefficients of the PDE. We consider coefficients $A = (a_{ij})_{ij}$ that are symmetric $a_{ij}=a_{ji}$, Lipschitz continuous, and uniformly elliptic; that is,
\begin{equation}\label{UE}
\lambda|\xi|^2\leq A(x)\xi \cdot \xi \leq\Lambda|\xi|^2 \qquad \mbox{in }\, B_1, \ \mbox{for any } \, \xi \in \R^n,
\end{equation}
for some $0<\lambda\leq\Lambda<+\infty$. Then, by the standard  Schauder theory, any weak solution is of class $C^{1,1-}_\loc(B_1)$ - i.e. $C^{1,\alpha}_\loc(B_1)$ for any $\alpha\in(0,1)$ - and the nodal set $Z(u)$ splits into a regular part $R(u)$ and a singular part $S(u)$ defined as
$$
R(u)=\{x\in Z(u) \ : \ |\nabla u(x)|\neq0\},\qquad S(u)=\{x\in Z(u) \ : \ |\nabla u(x)|=0\},
$$
where $R(u)$ is locally the graph of $C^{1,1-}$ functions and $S(u)$ has Hausdorff dimension at most $(n-2)$ (see e.g. \cite{Han,HanLin2,GarLin} for more detail on the structure of the nodal set). When restricted to the regular part $R(u)$ of the nodal set of $u$, the theory fits the standard higher order boundary Harnack principle and the quotient $v/u$ can be shown to be in the same class $C^{1,1-}$ from both sides of $R(u)$ (\cite{DesSav, Kuk}) and even across (\cite{TerTorVit1}).

In contrast to classical boundary Harnack inequalities, the presence of a singular part in the nodal set of $u$ prevents the application of known techniques and requires the development of new tools even for reaching H\"older continuity for the ratio $v/u$. In this direction, Lin and Lin in \cite{LinLin} recently deduced H\"older continuity across the full nodal set $Z(u)$ for some implicit small exponent $\alpha^*\in(0,1)$ depending on the Almgren frequency of the solution $u$ in the denominator. Indeed, despite the presence of the singular set, the authors were able to construct a modified Harnack chain built on the natural scaling invariance of the singularity, reconnecting with the proof for the case of NTA domains. On the opposite end, as already mentioned, in \cite{LogMal1,LogMal2} it was shown that the ratio is real analytic, when the coefficients are real analytic, by exploiting expansions in power series and local division lemmas between polynomials.\\

More recently, in \cite{TerTorVit1}, we approached the problem by framing it within the theory of degenerate elliptic PDEs. Indeed, the ratio $w:=v/u$ solves an elliptic equation in divergence form whose coefficients do degenerate at the nodal set $Z(u)\subseteq Z(v)$:
\begin{equation}\label{eqw}
\mathrm{div}\left(u^2A\nabla w\right)=0\qquad\mathrm{in \ } B_1.
\end{equation}
Even in the simplest case of a linear function $u(x) := \xi\cdot x$, such equations are so degenerate as to be discarded and not used in the literature. Here, solutions must be intended in the sense of weak solutions in the weighted Sobolev space $H^1(B_1,u^2)$ (see  the detailed discussion on the notion of the Sobolev space in \cite[\S 3.1]{TerTorVit2}). Although in general weak solutions to \eqref{eqw} need not be continuous across the nodal set of $u$ (see \cite[Example 1.4]{SirTerVit1} for more details on the effects of strongly degenerate weights), one can see that any solution $w$ which is continuous at least in $B_1\setminus S(u)$ is actually continuous in the whole $B_1$ by \cite{LinLin}, since it can be written as $w=v/u$ where $v$ is a solution to \eqref{equv} satisfying $Z(u)\subseteq Z(v)$ (see \cite[Proposition 3.9]{TerTorVit2}).

As a main result of our previous work \cite{TerTorVit1}, despite the presence of the degenerate weight that spoils the uniform ellipticity, we succeeded in providing Schauder elliptic estimates for $w$, depending on the regularity of coefficients,  thus proving \emph{higher order boundary Harnack principles} on regular nodal domains $R(u)$. More precisely, in the same spirit of the higher order boundary Harnack proved by De Silva and Savin in \cite{DesSav}, we obtained Schauder $C^{k,\alpha}$ regularity for the ratio across the regular set $R(u)$ when coefficients are $C^{k-1,\alpha}$, and this is clearly an optimal result. Furthermore, in the two dimensional case we addressed the possible occurrence of singularities of the nodal set of $u$. We were able to  prove $C^{1,\overline\alpha-}$ regularity also across the singular set $S(u)$, in the case of Lipschitz coefficients, with an explicit $\overline\alpha\in(0,1)$ that inversely depends on the maximal vanishing order of $u$ in the domain. Observe that the degeneracy of equation \eqref{eqw} becomes more and more dramatic as the order of vanishing of $u$ increases. However, we did not know at that time whether this exponent is a natural threshold for the regularity, imposed by the presence of the singularity: in the analytic case, no such restriction appears, and ratios are analytic even across singularities.

We anticipate that in the present paper we improve the two dimensional regularity result across singular points in \cite{TerTorVit1}: the $C^{1,\overline\alpha-}$ threshold may be overcome up to the optimal $C^{1,1-}$-regularity.

\subsection{A priori uniform estimates for the ratio \texorpdfstring{$v/u$}{Lg}}
As the reader may expect, the constants in our regularity estimates in \cite{TerTorVit1} depend on the geometric structure of the nodal set $Z(u)$ and notably on the vanishing orders of the solution $u$. The aim of the present paper is to provide some \emph{a priori} bounds for the ratio $w$ that are uniform on varying $u$, and consequently $Z(u)$,
in a compact class $\mathcal{S}_{N_0}$ of solutions with bounded Almgren frequency (see \eqref{classS.intro}). In order to state our results, we need to introduce the uniformity classes of coefficients and weights for the model degenerate equation \eqref{eqwa} that will be considered in the sequel.

Given $0<\lambda\leq\Lambda$, $L>0$, the class of admissible coefficients $\mathcal A=\mathcal A_{\lambda,\Lambda,L}$ is given by
\begin{equation}\label{coeff.ipotesi}
\begin{aligned}
\mathcal A &=\left\{
A=(a_{ij})_{ij} \, : \, A=A^T, \, A(0)=\mathbb I,\, \mathrm{satisfies} \,\eqref{UE}\mbox{ and } [a_{ij}]_{C^{0,1}(B_1)}\leq L\right\}.
\end{aligned}
\end{equation}
Then, given $N_0>0$, the class $\mathcal S_{N_0}=\mathcal S_{N_0,\lambda,\Lambda,L}$ of solutions $u$ of the elliptic equation in divergence form \eqref{equv}, whose powers are the weight terms in \eqref{eqwa}, is given by
\begin{equation}\label{classS.intro}
\mathcal S_{N_0}:=\left\{u \in H^1(B_1) \, : \, u\mbox{ solves }\eqref{equv}
 \mbox{ with }  A\in\mathcal A, \,0 \in Z(u),\, N(0,u,1)\leq N_0, \, \|u\|_{L^2(B_{1/2})}=1\right\}
\end{equation}
where $N$ is the extended Almgren frequency associated with \eqref{equv} (see Section \ref{structure.nodal} for a precise definition of $N$ and more details). In the case of matrices $A \in \mathcal{A}$ with constant coefficients, the \emph{frequency function} has the form
\be\label{e:ell}
N(x_0,u,r) = \dfrac{r\int_{E_r(x_0)} A \nabla u\cdot \nabla u\,dx}{\int_{\partial E_r(x_0)} u^2\,d\sigma},\quad\mbox{where}\quad
E_r(x_0):=\left\{x \in B_1\colon |A^{-1/2}(x-x_0)|<r\right\}
\ee
are the Euclidean ellipsoids associated to the matrix $A$, comparable with the Euclidean balls in terms of the ellipticity constants $\lambda, \Lambda$. As known and explained later, the function $N$ is almost monotone in $r$ (\emph{the Almgren monotonicity formula}).
We would like to emphasize that the choice of this class is natural in order to achieve uniformity within it. Indeed, the validity of the Almgren monotonicity formula allows us to estimate the size of nodal and critical sets as well as the vanishing orders of solutions in terms of bounds on their frequency (see also \cite{LinLin,NabVal,Han,HanLin2}).

Our first result concerns uniform-in-$\mathcal{S}_{N_0}$ \emph{a priori} H\"older bounds for the gradient of the ratio $w=v/u$ of solutions to \eqref{equv} with $u\in\mathcal S_{N_0}$ and $Z(u)\subseteq Z(v)$ in two dimensions (uniform Schauder estimates). As already mentioned, this amounts to considering equation \eqref{eqw}. In the case $n=2$, it is possible to improve the \emph{a priori} H\"older bound in \cite[Theorem 1.1]{TerTorVit2} by exploiting the geometry of the singular set $S(u)$, which consists of a locally finite set of isolated points at which a finite number of regular curves meet with equal angles (see \cite{Han,Han2,HanLin2,HarSim}).

For the sake of simplicity, throughout the paper we will say that a constant $C>0$ \emph{depends on the class $\mathcal{S}_{N_0}$}, if it only depends on $n, N_0,\lambda,\Lambda$ and $L$.

\begin{Theorem}[\emph{A priori} uniform-in-$\mathcal S_{N_0}$ Schauder $C^{1,\alpha}$-estimates for the ratio in two-dimensions]\label{uniformgradientZ}
Let $n=2$, $A\in\mathcal A, u \in \mathcal{S}_{N_0}$ and $v$ be solutions to
$$
L_A u = L_A v = 0\quad\mbox{in }B_1,\quad\mbox{such that }Z(u)\subseteq Z(v).
$$
Then, if $v/u\in C^{1,\alpha}_{\loc}(B_1)$, there exists a positive constant $C$ depending only on $\mathcal{S}_{N_0}$ and $\alpha\in(0,1)$ such that
\begin{equation*}
\left\|\frac{v}{u}\right\|_{C^{1,\alpha}(B_{1/2})}\leq C\|v\|_{L^2(B_1)}.
\end{equation*}
\end{Theorem}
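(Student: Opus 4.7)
The plan is a Campanato-style blow-up argument by contradiction, combined with a two-dimensional Liouville theorem established via a conformal (resp.\ quasiconformal) change of variables. Assume the conclusion fails, so there exist $\alpha\in(0,1)$, matrices $A_k\in\mathcal A$, weights $u_k\in\mathcal S_{N_0}$ and $L_{A_k}$-harmonic $v_k$ with $Z(u_k)\subseteq Z(v_k)$, such that $w_k=v_k/u_k\in C^{1,\alpha}_{\loc}(B_1)$ satisfies $\|w_k\|_{L^\infty(B_1)}\le 1$ while $M_k:=[w_k]_{C^{1,\alpha}(B_{1/2})}\to\infty$. I would pick near-maximizing pairs $x_k,y_k\in B_{1/2}$ for the seminorm, set $r_k:=|x_k-y_k|$, and define the affinely normalized rescalings
\[
\tilde w_k(x):=\frac{w_k(x_k+r_kx)-w_k(x_k)-\nabla w_k(x_k)\cdot r_kx}{M_k r_k^{1+\alpha}},
\]
together with natural $L^2$-normalizations $\tilde u_k(x):=u_k(x_k+r_kx)/\rho_k$ and $\tilde A_k(x):=A_k(x_k+r_kx)$. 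The pair $(\tilde u_k,\tilde w_k)$ then solves the rescaled degenerate equation $\mathrm{div}(\tilde u_k^2\tilde A_k\nabla \tilde w_k)=0$ on balls of radius $1/(2r_k)\to\infty$, with $\tilde w_k(0)=0$, $\nabla \tilde w_k(0)=0$, $[\tilde w_k]_{C^{1,\alpha}}=1$ almost attained at $e_k:=(y_k-x_k)/r_k\in\partial B_1$, and controlled growth $|\tilde w_k(x)|\le C(1+|x|^{1+\alpha})$.

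The second step is compactness. Using the Almgren monotonicity and the frequency bound in $\mathcal S_{N_0}$, the rescaled weights $\tilde u_k$ converge (along a subsequence) in $C^{0,\beta}_{\loc}\cap H^1_{\loc}$ to a homogeneous $A_\infty$-harmonic polynomial $u_\infty$ of integer degree $N\in\{1,\dots,\lfloor N_0\rfloor\}$, and $\tilde A_k$ converges to a constant symmetric matrix $A_\infty$ which, after a linear change of coordinates, may be taken to be the identity. Crucially, the a priori uniform Hölder estimate of Theorem \ref{uniformHolderZ}, applied on the rescaled balls, supplies the equicontinuity of $\tilde w_k$ across the nodal sets $Z(\tilde u_k)$ needed to extract a locally uniform limit $w_\infty$ that is continuous across $Z(u_\infty)$, rather than merely a weighted-Sobolev limit with distributional jumps along the nodal set. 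One thus obtains an entire $C^{1,\alpha}_{\loc}(\R^2)$ solution to
\[
\mathrm{div}(u_\infty^2 \nabla w_\infty)=0 \qquad \text{in } \R^2,
\]
with $w_\infty(0)=|\nabla w_\infty(0)|=0$, subquadratic growth $|w_\infty(x)|\le C(1+|x|^{1+\alpha})$, and $[w_\infty]_{C^{1,\alpha}}\ge 1/2$.

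The third step is a Liouville theorem forcing $w_\infty$ to be affine, contradicting the normalization. In two dimensions, I would rotate so that $u_\infty(z)=\Real(z^N)$ and use the conformal substitution $\zeta=z^N$, a branched covering branched only at the origin. This pulls back the degenerate equation $\mathrm{div}(u_\infty^2\nabla w_\infty)=0$ to an equation in the $\zeta$-plane with weight $|\zeta|^{2(N-1)/N}$, hence an $A_2$-Muckenhoupt weight for $N\ge 2$ (and trivially uniformly elliptic for $N=1$). A classical Liouville theorem for such weighted elliptic equations, or equivalently separation of variables combined with the sub-$N(1+\alpha)$ growth bound for the pulled-back solution, shows that only the constant and linear homogeneities on the $z$-side survive — note $1+\alpha<2\le N+\alpha$ for $N\ge 2$ — and hence $w_\infty$ is affine. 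For the genuinely variable-coefficient incarnation one replaces the conformal map $z\mapsto z^N$ by a quasiconformal map built out of an $A_k$-harmonic conjugate of $u_k$, as already exploited in \cite{TerTorVit1}; the quasiconformal distortion depends only on $\lambda,\Lambda$ and preserves the Liouville mechanism.

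The main obstacle will be the compactness step at singular points of the nodal set: the rescaled centers $x_k$ may well accumulate at $S(u_k)$ on scale $r_k$, and without a uniform Hölder estimate across $Z(u_k)$ there is no way to pass to a continuous limit $w_\infty$ — the limit could be any $H^1(u_\infty^2)$-weak solution, with arbitrary jumps along $Z(u_\infty)$, which would invalidate the Liouville step. The a priori bound of Theorem \ref{uniformHolderZ}, together with the observation that a $v/u$-solution continuous off $S(u)$ is continuous everywhere (coming from the Lin–Lin result), is precisely what bridges this gap. A secondary technical difficulty is to make the quasiconformal reduction of the third step uniform in $k$: one needs the distortion, the degree $N$, and the weight exponent to be controlled by $\mathcal S_{N_0}$-parameters only, so that the Liouville classification inherits the required uniformity and closes the contradiction.
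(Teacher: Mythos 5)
Your overall architecture (contradiction, affinely normalized blow-up, compactness via Theorem \ref{uniformHolderZ}, two-dimensional Liouville classification) matches the paper's, but there is a genuine gap at the very point where the paper spends most of its effort. When you subtract the affine part $w_k(x_k)+\nabla w_k(x_k)\cdot r_kx$ and rescale, the function $\tilde w_k$ does \emph{not} solve the homogeneous equation $\mathrm{div}(\tilde u_k^2\tilde A_k\nabla\tilde w_k)=0$: since the weight $\tilde u_k^2$ is not constant, the divergence of $\tilde u_k^2\tilde A_k$ applied to the constant vector $\nabla w_k(x_k)$ does not vanish, and one gets a source term of the form
\[
-\mathrm{div}\bigl(\tilde u_k^2\tilde A_k\nabla\tilde w_k\bigr)=\frac{2}{M_k r_k^{\alpha}}\,\tilde u_k\,\bigl(\nabla\tilde u_k\cdot\tilde A_k\nabla w_k(x_k)\bigr)+\dots
\]
The coefficient $|\nabla w_k(x_k)|/(M_k r_k^{\alpha})$ is exactly the quantity the paper denotes $|\nabla W_k(0)|$, and it is \emph{not} a priori bounded along the contradiction sequence. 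If this term does not vanish in the limit, the blow-up limit $w_\infty$ is not a solution of the homogeneous degenerate equation and the Liouville step cannot be applied. The paper's proof is essentially devoted to killing this term: it chooses the blow-up center to be either $x_k$ or its projection onto $Z(u_k)$ according to the ratio $\delta_k/r_k$, introduces a critical radius at which the Almgren frequency of $u_k$ first exceeds $1+\eps$, and proves a ``hooking lemma'' producing two regular nodal points with transversally independent normals; the conormal condition $A\nabla w\cdot\nabla u=0$ on $R(u)$ at those two points then yields the bound $|\nabla W_k(0)|\le C(d_k+R_k\overline R)^{\alpha}$, after which a case analysis on the asymptotics of $d_k$ and $R_k$ (together with doubling and scaled Schauder estimates for $U_k$) shows the source term tends to zero; the remaining ``almost flat'' case is handled by reducing to the higher order boundary Harnack on regular nodal domains. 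None of this appears in your proposal, and without it the identification of the limit equation fails.

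Two secondary remarks. First, for the Liouville step with $a=2$ the paper does not use the conformal substitution $z\mapsto z^N$ at all: it multiplies back, $V=UW$, obtains an entire harmonic function with $Z(U)\subseteq Z(V)$ and growth $|V/U|\le C(1+|x|)^{1+\alpha}$, and invokes Murdoch's division lemma (Theorem \ref{liouvilletheorem}), which works in every dimension; your conformal route is the one reserved for general exponents, and your claimed pulled-back weight $|\zeta|^{2(N-1)/N}$ is incorrect (by conformal invariance of the Dirichlet energy in two dimensions the weight transforms by composition, becoming $(\Real\zeta)^2$ under $\zeta=z^N$, or $y^2$ under the hodograph map $(\overline u,u)$ on each nodal component). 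Second, your use of Theorem \ref{uniformHolderZ} to get continuity of the limit across $Z(u_\infty)$ is indeed the right ingredient and matches the paper.
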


\subsection{A priori uniform estimates for solution to degenerate equations with general exponents}
An interesting feature of our approach is its natural extension to a larger class of singular/degenerate equations related to \eqref{eqw} with a general exponent $a\in\R$ (positive or negative)
\begin{equation}\label{eqwa}
\mathrm{div}\left(|u|^a A\nabla w\right)=0\qquad\mathrm{in \ }B_1.
\end{equation}

A clarification of the functional framework for the equation \eqref{eqwa} is in order and we refer to \cite[\S 3.1]{TerTorVit2} for a full and comprehensive treatment. Weak solutions of \eqref{eqwa} are elements of the weighted Sobolev space $H^1(B_1,|u|^adx)$. The latter is defined as the completion of smooth functions with respect to the natural weighted norm associated with the weak formulation of the problem. However, one can show that the space can be equivalently defined as the space of functions having weak gradient and with finite norm; that is, the $(H\equiv W)$ property holds true (see \cite[Proposition 3.5]{TerTorVit2}). Moreover, the interaction of the vanishing orders of $u$ and the power $a$ calls for an in-depth analysis. For example, we need a bound $a>-a_{\mathcal{S}}$, where the critical exponent $a_{\mathcal{S}}\in(0,1]$ is taken to ensure the local integrability of the weight $|u|^a$, uniformly-in-$\mathcal{S}_{N_0}$ (see \eqref{a_S} and \cite[Lemma 3.1]{TerTorVit2}). Moreover, many properties of the weighted Sobolev space depend on the matching of $a$ and $N_0$, notably the capacitary estimates of $Z(u)$ and $S(u)$ (see \cite[Lemma 3.1 and Lemma 3.3]{TerTorVit2}).

Like in the case $a=2$, we are seeking uniform regularity estimates for solutions to weighted equations as in \eqref{eqwa}. As already mentioned, the class $\mathcal S_{N_0}$   provides effective control over both the coefficients and the geometry of the nodal set of $u$ along with its vanishing orders. Below are our main results for the equations \eqref{eqwa}.

\begin{Theorem}[\emph{A priori} uniform-in-$\mathcal S_{N_0}$ estimates in two-dimensions and general exponent]\label{uniformwa}
Let $n=2$, $a>-a_{\mathcal{S}}$, $A\in\mathcal A$, $u \in \mathcal{S}_{N_0}$ and $w \in H^1(B_1,|u|^a)$ be a solution to \eqref{eqwa} in the sense of Definition \ref{definition.energy.a}. Then the following holds true:
\begin{itemize}
    \item[\rm{(i)}] if $w\in C^{0,\alpha}_{\loc}(B_1)$, then
\begin{equation*}
\left\|w\right\|_{C^{0,\alpha}(B_{1/2})}\leq C\|w\|_{L^\infty(B_1)},
\end{equation*}
for some constant $C>0$ depending only on $\mathcal{S}_{N_0}, a$ and $\alpha\in(0,1)$;
\item[\rm{(ii)}] if $a\geq0$ and $w\in C^{1,\alpha}_{\loc}(B_1)$, then
\begin{equation*}
\left\|w\right\|_{C^{1,\alpha}(B_{1/2})}\leq C\|w\|_{L^\infty(B_1)},
\end{equation*}
for some constant $C>0$ depending only on $\mathcal{S}_{N_0}, a$ and $\alpha\in(0,1)$.
\end{itemize}
\end{Theorem}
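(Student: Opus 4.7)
The strategy is a standard contradiction--compactness--Liouville scheme, in the spirit of Campanato and the blow-up arguments of De Silva--Savin. Fix $\alpha\in(0,1)$ and suppose one of the two estimates fails. Then there are sequences $A_k\in\mathcal A$, $u_k\in\mathcal S_{N_0}$, and solutions $w_k\in H^1(B_1,|u_k|^a)$ of \eqref{eqwa} with $\|w_k\|_{L^\infty(B_1)}\le 1$ whose $C^{0,\beta}$-seminorm on $B_{1/2}$ blows up, where $\beta=\alpha$ in case (i) and $\beta=1+\alpha$ in case (ii). After subtracting the best affine tangent $p_k$ at a maximizing point $x_k$ (with $p_k\equiv 0$ in case (i)) and rescaling at a scale $r_k\to 0$ produced by a Campanato-type iteration, one obtains renormalized sequences
\begin{equation*}
\tilde w_k(x):=\frac{w_k(x_k+r_kx)-p_k(x_k+r_kx)}{M_k\,r_k^{\beta}},
\qquad \tilde u_k(x):=\frac{u_k(x_k+r_kx)}{\rho_k},
\qquad \tilde A_k(x):=A_k(x_k+r_kx),
\end{equation*}
with $M_k\to\infty$ encoding the diverging excess and $\rho_k$ chosen so that $\|\tilde u_k\|_{L^2(B_{1/2})}=1$. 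By construction each $\tilde w_k$ solves $\mathrm{div}(|\tilde u_k|^a \tilde A_k\nabla\tilde w_k)=0$ on a ball of radius $R_k\to\infty$, obeys the growth bound $|\tilde w_k(x)|\le C(1+|x|^{\beta})$, and is nontrivial in the sense that some Campanato-type excess on $B_1$ equals $1$.

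The next step is to pass to the limit along a subsequence. The class $\mathcal S_{N_0}$ is compact under this rescaling: the extended Almgren monotonicity of Section \ref{structure.nodal} combined with standard frequency convergence yields that $\tilde u_k\to u_\infty$ locally uniformly and in $H^1_{\loc}$, with $u_\infty$ a nontrivial homogeneous limit; in $n=2$, after a rotation and normalization, $u_\infty=\Real(z^N)$ for some integer $N\le N_0$. The matrices $\tilde A_k$ converge uniformly on compact sets to a constant symmetric $A_\infty$, which may be reduced to the identity by a linear change of variables. For the rescaled solutions, the assumption $a>-a_{\mathcal S}$ provides uniform local integrability of $|\tilde u_k|^a$ (Lemma \ref{l:a-N0}), the uniform capacity bounds on $Z(\tilde u_k)$ (Lemma \ref{l:capacity}) prevent concentration of weighted energy on the nodal set, and interior elliptic estimates away from $Z(\tilde u_k)$ together with the subcritical growth control produce locally uniform convergence of $\tilde w_k$ to an entire function $w_\infty\in H^1_{\loc}(\R^2,|u_\infty|^a)$ solving
\begin{equation*}
\mathrm{div}\bigl(|u_\infty|^a\nabla w_\infty\bigr)=0 \quad\text{in }\R^2,
\end{equation*}
with $|w_\infty(x)|\le C(1+|x|^{\beta})$ and inheriting the same nontriviality and (in case (ii)) the vanishing of value and gradient at the origin.

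The contradiction is obtained through a Liouville-type rigidity theorem for the limit equation. Since $u_\infty=\Real(z^N)$, the map $z\mapsto z^{1/N}$ (taken sheet by sheet) is a quasiconformal change of variables that unfolds the $N$ nodal lines of $u_\infty$ and conjugates the weight $|u_\infty|^a$ to the flat model weight $|\Real(\zeta)|^a$, transforming the equation into a uniformly elliptic weighted operator of the form $\mathrm{div}(|y_1|^a B\nabla\cdot)=0$ on $\R^2$. For this flat model, a Fourier separation of variables in the angular direction produces a complete discrete family of homogeneous solutions $r^\gamma\Phi(\theta)$, whose admissible exponents $\gamma$ lie in an explicit set bounded below by $1$ in case (i) (respectively by $2$ in case (ii), where the assumption $a\ge 0$ excludes sub-affine homogeneous modes coming from the weight). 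Hence any solution obeying the subcritical growth $|w_\infty|\le C(1+|x|^{\beta})$ with $\beta<1$ (respectively $\beta<2$) must reduce to a constant (respectively an affine function) in the unfolded plane, and pulling back one infers the same on the original plane. This contradicts the nontriviality of the excess of $\tilde w_k$ on $B_1$ and closes the proof of both (i) and (ii).

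The main obstacle will be the Liouville theorem, and in particular the careful verification that the quasiconformal unfolding preserves the weighted Sobolev framework and the weak formulation, so that the reduced equation may be safely analyzed in the flat-weight setting; a secondary but nontrivial difficulty is proving compactness of the rescaled family $\tilde w_k$ across the degenerate nodal set $Z(\tilde u_k)$, for which the uniform capacity estimates and the $(H\equiv W)$ property of Section \ref{s:functional-setting-a} are essential.
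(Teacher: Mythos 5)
Your part (i) follows essentially the paper's route (blow-up by contradiction, compactness of $\mathcal S_{N_0}$, convergence to an entire solution with weight a harmonic polynomial, contradiction via the Liouville theorem), and up to bookkeeping it is fine. For part (ii), however, there is a genuine gap at the very point where the paper concentrates its effort. Once you subtract the affine tangent $p_k$ and rescale, the function $\tilde w_k$ does \emph{not} solve the homogeneous equation: since $\nabla p_k$ is a nonzero constant vector, one gets
\begin{equation*}
-\mathrm{div}\bigl(|\tilde u_k|^a\tilde A_k\nabla\tilde w_k\bigr)= a\,|\tilde u_k|^{a-1}\,\mathrm{sgn}(\tilde u_k)\,\bigl(\nabla\tilde u_k\cdot\tilde A_k\nabla p_k\bigr)\cdot\tfrac{1}{M_k r_k^{\alpha}}+\dots,
\end{equation*}
exactly the linear term \eqref{e:terminelineare} (see \eqref{e:a1}--\eqref{e:a2}). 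The normalized gradient $\nabla w_k(x_k)/(M_kr_k^\alpha)$ is not a priori bounded, so this right-hand side cannot simply be discarded, and your claimed limit equation $\mathrm{div}(|u_\infty|^a\nabla w_\infty)=0$ is unjustified. The paper handles this by \emph{hooking}: it locates two regular nodal points with quantitatively transversal normals via a drop of the Almgren frequency at a critical radius (Lemma \ref{l:notanera}), uses the conormal condition $A\nabla w\cdot\nu=0$ on $R(u)$ at both points to bound $|\nabla W_k(0)|$ by $(d_k+R_k\overline R)^\alpha$, and then runs a case analysis on the relative sizes of $d_k$, $R_k$ and $r_k$ (Sections \ref{s:hooking}, \ref{s:inter}, \ref{s:almost-flat}, the last reducing to the higher order boundary Harnack near regular points). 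None of this appears in your proposal, and without it the proof of (ii) does not close.

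A secondary issue concerns the Liouville step. Because the blow-up centers move and the scale $r_k$ is dictated by the diverging seminorm rather than by the vanishing order of $u_k$, the limit of $\tilde u_k$ is a harmonic polynomial that need \emph{not} be homogeneous (Proposition \ref{p:blow-up}); your unfolding $z\mapsto z^{1/N}$ only applies to the homogeneous model $\Real(z^N)$. The paper's Theorem \ref{liou_a>-1} therefore works nodal component by component via the conformal hodograph $\Theta=(\overline u,u)$ on components with connected boundary, and then glues across the nodal set by a strong unique continuation principle for the degenerate equation (Section \ref{sec:unique}); this extra step is needed in your argument as well. Finally, the role of $a\ge 0$ in (ii) is not to exclude ``sub-affine modes'' in the flat model: it enters through the functional setting (Definition \ref{definition.energy.a}, the capacity of $S(u)$) and through the hooking estimates, while the Liouville theorem itself holds for all $a>-\min\{1,2/d\}$.
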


     \emph{A priori} estimates in PDEs are a useful tool to prove \emph{a posteriori} regularity: having at one's disposal a regularization-approximation scheme, the estimates then extend to any given solution of a target PDE. Here, it is not so clear how to build a general regularization-approximation scheme neither for problem \eqref{eqwa} or for the boundary Harnack problem.
        However, in Section \ref{sec:fixed} we build a regularization-approximation scheme in the two dimensions, which implies the sharp $C^{1,1-}$ \emph{a posteriori} regularity even across singular points, once more uniform-in-$\mathcal S_{N_0}$ (see Theorem \ref{effective2}).

Following the program defined in \cite{SirTerVit1, SirTerVit2,TerTorVit1} for degenerate equations, and adapting the classical approach in \cite{Sim}, the Schauder estimates in the theorems above are proved via a blow-up argument and the validity of Liouville-type theorems. However, several difficulties make the case of singular points very peculiar, especially regarding the gradient bounds.  As already observed in \cite{TerTorVit1} in the case of regular nodal sets, to overcome the presence of degenerations of the coefficients, the idea is to control the oscillation of the gradient at points close to the nodal set by \emph{hooking} them to their projections on $R(u)$ (see Section \ref{s:hooking}).

\subsection{Liouville theorem}
The previous analysis relies on some Liouville type theorems for entire solutions of
\begin{equation*}
    \mathrm{div}(|u|^a\nabla w)=0\qquad\mathrm{in} \ \R^n,
\end{equation*}
with $u$ a homogeneous harmonic polynomial. In our opinion, these have independent interest, also related to their conformal equivariance.

We would like to highlight that in the general exponent case the classification of entire profiles needs to be carried out through a new Liouville theorem (comparing to the Liouville theorem in \cite[Theorem 1.2]{TerTorVit2} which works for the case $a=2$), which is just two dimensional and relies also on the validity of unique continuation principles for degenerate PDEs (see Section \ref{sec:unique}).

\begin{Theorem}[Liouville theorem for general exponents]\label{liou_a>-1}
Let $n\geq2$, $u$ be a harmonic polynomial of degree $d$ and $a>-\min\{1,2/d\}$. Suppose that $0 \in Z(u)$ and let $w$ be an entire and continuous solution to
\begin{equation}\label{e:a-entire}
\mathrm{div}\left(|u|^a\nabla w\right)=0\qquad\mathrm{in \ }\R^n.
\end{equation}
Suppose that there exist $\gamma\geq0$ and $C>0$ such that
\begin{equation*}
|w(x)|\leq C(1+|x|)^{\gamma}\qquad\mathrm{in \ } \R^n.
\end{equation*}
Then the following points hold true:
\begin{itemize}
\item[\rm{(i)}] if $d=1$ then, up to a rotation, $w$ is a polynomial of degree at most $\lfloor \gamma\rfloor$ and symmetric with respect to $y$-variable;
\item[\rm{(ii)}] if $d\geq 2$ and $n=2$, then, up to a rotation, there exists a harmonic polynomial $P$ of degree at most $\lfloor \gamma/d\rfloor$ and symmetric with respect to $y$-variable, such that
$$
w(x,y)  =  P(\overline{u}(x,y),u(x,y)),
$$
where $\overline u$ is the harmonic conjugate of $u$ satisfying $\overline{u}(0)=0$.
\end{itemize}
\end{Theorem}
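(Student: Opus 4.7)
The strategy splits along the two cases of the statement. In case (i), the classification follows from a direct weighted Liouville argument for the model equation $\mathrm{div}(|y|^a\nabla w)=0$. In case (ii), specific to two dimensions, the conformal equivariance of \eqref{e:a-entire} under the holomorphic polynomial associated to $u$ reduces the task to an application of case (i) on the target plane. Throughout, the Almgren-type monotonicity formula for \eqref{eqwa} and the unique continuation principles developed in Section \ref{sec:unique} are the key tools.

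For case (i), a rotation brings $u$ to the form $u(x',y)=y$ with $x'\in\R^{n-1}$, so that the equation reduces to $\mathrm{div}(|y|^a\nabla w)=0$ on $\R^n$ with $a>-1$. The polynomial-growth assumption combined with a weighted Caccioppoli iteration on dyadic annuli yields
\[
\int_{B_r}|y|^a|D^k w|^2\,dx\;\leq\;C\,r^{\,n+a+2\gamma-2k}\xrightarrow[r\to\infty]{}0
\]
whenever $k>\gamma$. Hence $D^k w\equiv 0$ in the weighted sense, and continuity promotes this to $w$ being a polynomial of degree at most $\lfloor\gamma\rfloor$. To extract the symmetry in $y$, I would decompose $w=w_s+w_a$ into its parts even and odd in $y$; both are polynomial solutions of the weighted equation. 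Inserting the odd ansatz $w_a(x',y)=\sum_k g_k(x')y^{2k+1}$ into the equation produces a lowest-order contribution of the form $a\,g_0(x')|y|^{a-1}\mathrm{sgn}(y)$, which for $a$ in the admissible range forces $g_0\equiv 0$; the ensuing recursion on the coefficients $g_k$ then propagates to $g_k\equiv 0$ for every $k$, so that $w_a\equiv 0$ and $w=w_s$ is symmetric in $y$.

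For case (ii), I identify $\R^2\equiv\C$ and let $F$ be the holomorphic polynomial of degree $d$ with $F(0)=0$ realizing $u=\Real F$ and $\overline u=\Imm F$, where $\overline u$ is the harmonic conjugate of $u$ satisfying $\overline u(0)=0$. Setting $(X,Y):=(\overline u,u)$, a direct calculation using the Cauchy--Riemann relations $|\nabla\overline u|=|\nabla u|$, $\nabla\overline u\cdot\nabla u=0$, together with the harmonicity of both functions, gives the transformation rule
\[
\mathrm{div}_{(x,y)}(|u|^a\nabla w)=|\nabla u|^2\;\mathrm{div}_{(X,Y)}(|Y|^a\nabla_{(X,Y)}P)\qquad\text{whenever }w(x,y)=P(\overline u(x,y),u(x,y)).
\]
Hence, on any open set where $F$ is locally invertible, the pullback $P$ solves the model equation of case (i) in the target plane. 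Since $|F(z)|\asymp|z|^d$ as $|z|\to\infty$, the polynomial bound $|w|\leq C(1+|z|)^\gamma$ translates into $|P(\zeta)|\leq C(1+|\zeta|)^{\gamma/d}$; applying case (i) with $n=2$ and $d=1$ to $P$ yields a polynomial solution symmetric in $Y$ of degree at most $\lfloor\gamma/d\rfloor$, and the desired representation $w=P(\overline u,u)$ follows.

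The main obstacle is promoting $P$ from a locally defined object to a single-valued function on the whole of $\C$. Since $F$ is a $d$-fold branched covering with cyclic deck group $G$ (generated, after normalization of the homogeneous model via a rotation, by $z\mapsto e^{2\pi i/d}z$), this amounts to proving that $w$ itself is $G$-invariant. The plan is to decompose $w=\sum_{j=0}^{d-1}w_j$ into isotypic components for $G$; each $w_j$ is again an entire polynomial-growth solution of the equation, but for $j\neq 0$, when lifted via $F$, it acquires a non-trivial monodromy around the branch point $z=0$, which is incompatible with being single-valued and entire. The admissible range $a>-\min\{1,2/d\}$, combined with the unique continuation principles recalled in Section \ref{sec:unique}, is precisely what rules out such non-invariant components: they are forced to vanish identically, leaving only the $G$-invariant $w_0$ and yielding the claimed representation.
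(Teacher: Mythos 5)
Your part (i) follows a genuinely different route from the paper's, which reflects $w$ evenly across $\{x_n=0\}$ on each side separately, invokes the Liouville theorem of \cite[Lemma B.2]{DelFelVit} for the reflected half-space solutions, and then matches the two resulting polynomials through Lemma \ref{tracepolynomial} and Corollary \ref{tracepolynomial2}. Your direct dyadic iteration is plausible in the tangential directions, but the higher-order Caccioppoli bound is only asserted: normal derivatives of a solution of $\mathrm{div}(|y|^a\nabla w)=0$ do not solve the same equation (indeed $|y|^a\partial_y w$ solves a conjugate equation with weight $|y|^{-a}$), so passing from $k=1$ to $k\geq 2$ in the $y$-direction requires an argument — this is essentially the content of the Liouville lemma the paper cites. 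Your parity computation at the end is fine for polynomials, but it presupposes the polynomial structure you have not yet fully established.

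The genuine gap is the globalization step in part (ii). You identify the hodograph map with a $d$-fold branched covering whose deck group is cyclic ``after normalization of the homogeneous model via a rotation'', and then propose to kill the non-invariant isotypic components of $w$. But $u$ is an arbitrary harmonic polynomial of degree $d\geq 2$, not a homogeneous one: the associated holomorphic polynomial $F=\overline u + i u$ generically admits no non-trivial deck transformations, $S(u)$ may consist of several distinct points, and there is no group $G$ with respect to which $w$ can be decomposed. Even in the homogeneous case the monodromy argument is inconclusive as stated, since each component $w_j$ is a perfectly single-valued entire function on $\C$; only its push-forward under $F$ would be multivalued, and nothing requires that push-forward to be single-valued a priori. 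The paper circumvents all of this: it restricts to one nodal component $\O$ with \emph{connected boundary}, where Lemma \ref{c:conformal-polynomial} shows that $\Theta=(\overline u,u)$ is a homeomorphism of $\overline{\O}$ onto the closed half-plane (connectedness of $\partial\O$ excludes interior critical points), applies part (i) there to get $w=P(\overline u,u)$ on $\O$, and then propagates this identity across $R(u)$ to every other nodal domain by the strong trace unique continuation principle (Proposition \ref{traceuniquecontinuation}) applied to $V=w-P(\overline u,u)$ in Fermi coordinates. Without this, or an equivalent unique continuation step, your representation of $w$ remains local to one sheet and the theorem is not proved.
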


\subsection{A posteriori higher regularity across singular sets}
So far we have worked on \emph{a priori estimates}. As has often been the case in history, they can become a very useful tool in obtaining effective estimates. Thus, we finally propose a $2$-dimensional regularization-approximation scheme for the degenerate equation \eqref{eqwa} that leverages the \emph{a priori} estimate in Theorem \ref{uniformwa} part (ii) and leads to the optimal $C^{1,1-}$ effective regularity of solutions even across the singular set. When $a=2$, this gives the higher order $C^{1,1-}$ boundary Harnack principle on nodal domains.

The following theorem improves \cite[Theorem 1.5]{TerTorVit1} both in terms of H\"older exponents of the gradient of solutions and in terms of uniformity-in-$\mathcal S_{N_0}$ of the constants.

\begin{Theorem}[\emph{A posteriori} uniform-in-$\mathcal S_{N_0}$ Schauder $C^{1,1-}$-estimates in two-dimensions and general exponent]\label{effective2}
Let $n=2, a\geq 0, A\in \mathcal{A}$ and $u\in \mathcal{S}_{N_0}$. Given $w$ a continuous solution to \eqref{eqwa} in the sense of Definition \ref{definition.energy.a}, we have $w \in C^{1,1-}_\loc(B_1)$ and
$$
A\nabla w\cdot\nabla u=0 \quad\mathrm{on \ } R(u)\cap B_1,\qquad \left|\nabla w\right|=0 \quad\mathrm{on \ } S(u)\cap B_1.
$$
Moreover, for any $\alpha\in(0,1)$ there exists a constant $C>0$ depending on $\alpha,a$ and the class $\mathcal{S}_{N_0}$, such that
\begin{equation*}
\left\|w\right\|_{C^{1,\alpha}(B_{1/2})}\leq C\left\|w\right\|_{L^\infty(B_1)}.
\end{equation*}
\end{Theorem}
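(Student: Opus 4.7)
The strategy is a two-dimensional regularization-approximation scheme that uses Theorem~\ref{uniformwa}(ii) as an a priori tool and extracts the regularity in the limit by compactness. Given $u\in\mathcal S_{N_0}$ and $A\in\mathcal A$, I would first construct a family of approximations $u_\varepsilon$, solving $L_{A_\varepsilon}u_\varepsilon=0$ for some $A_\varepsilon\in\mathcal A$ close to $A$, converging to $u$ locally uniformly, remaining in a class $\mathcal S_{N_0'}$ with $N_0'$ depending only on $\mathcal S_{N_0}$, and --- crucially --- satisfying $S(u_\varepsilon)=\emptyset$, i.e. $Z(u_\varepsilon)=R(u_\varepsilon)$. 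In two dimensions this is possible because $S(u)$ is a locally finite collection of isolated points where an even number of regular arcs meet with equal angles; such configurations split into simple transversal intersections under generic small perturbations, and the compactness of $\mathcal S_{N_0}$ should allow the procedure to be quantitative, preserving Lipschitz uniform ellipticity and the frequency bound.

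For each $\varepsilon$ I would solve the Dirichlet problem $\mathrm{div}(|u_\varepsilon|^a A_\varepsilon\nabla w_\varepsilon)=0$ in $B_{3/4}$ with datum $w|_{\partial B_{3/4}}$. Since $Z(u_\varepsilon)$ is purely regular, \cite[Theorem~1.5]{TerTorVit1} yields $w_\varepsilon\in C^{1,\beta}_{\loc}(B_{3/4})$ for some $\beta\in(0,1)$; the hypotheses of Theorem~\ref{uniformwa}(ii) are thus met, giving, for every $\alpha\in(0,1)$,
\[
\|w_\varepsilon\|_{C^{1,\alpha}(B_{1/2})}\leq C\|w_\varepsilon\|_{L^\infty(B_{3/4})}\leq C\|w\|_{L^\infty(B_1)},
\]
with $C=C(\mathcal S_{N_0'},a,\alpha)$ independent of $\varepsilon$ (the last bound coming from the weighted maximum principle). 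By Arzel\`a-Ascoli, along a subsequence $w_\varepsilon\to\widetilde w$ in $C^{1,\alpha'}_{\loc}(B_{1/2})$ for any $\alpha'<\alpha$, and the $C^{1,\alpha}$-bound persists. Uniform $L^p_{\loc}$-integrability of $|u_\varepsilon|^a\to|u|^a$, granted by $a>-a_{\mathcal S}$ and Lemma~\ref{l:a-N0}, allows to pass to the limit in the weak formulation, so $\widetilde w$ is a continuous solution of \eqref{eqwa} in $B_{1/2}$ with the same boundary trace as $w$; a uniqueness statement for continuous solutions (from the capacitary estimates of Lemma~\ref{l:capacity}) then forces $\widetilde w=w$, transferring the bound.

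The boundary identities are a by-product of the $C^{1,\alpha}$-regularity now in hand. At a point of $R(u)$, a local change of variables flattening the nodal curve brings \eqref{eqwa} into the model form $\mathrm{div}(|x_2|^a\nabla w)=0$, whose trace analysis imposes the conormal vanishing $A\nabla w\cdot\nabla u=0$. At a singular point of vanishing order $d\geq 2$, a linear rescaling of $w$ converges to an entire $C^{1,\alpha}$-solution of the frozen degenerate equation with at most linear growth, and Theorem~\ref{liou_a>-1}(ii) classifies such a profile as a polynomial of degree at most $\lfloor 1/d\rfloor=0$, i.e. a constant, forcing $\nabla w=0$ at the singular point. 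The main obstacle of the scheme is the construction of the approximation $u_\varepsilon$ in the first step: one must simultaneously eliminate all singular zeros while keeping $A_\varepsilon$ Lipschitz and uniformly elliptic and --- most delicately --- preserving a uniform Almgren frequency bound. This is nontrivial because a naive perturbation can globally reshape nodal arcs and inflate the frequency, so a quantitative argument based on the compactness of $\mathcal S_{N_0}$ and on the classification of tangent profiles at singular points is required; once this step is in place, the rest is fairly standard.
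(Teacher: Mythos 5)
Your scheme diverges from the paper's at the very first step, and the divergence creates a gap that I do not see how to close for $a\geq 1$ (in particular for the motivating case $a=2$). You propose to remove the singular point, replacing $u$ by a nearby $A_\varepsilon$-harmonic $u_\varepsilon$ with $S(u_\varepsilon)=\emptyset$, and then to solve the Dirichlet problem for the weight $|u_\varepsilon|^a$ with datum $w|_{\partial B_{3/4}}$. But for $a\geq 1$ the nodal set $Z(u_\varepsilon)$ has zero $H^1(|u_\varepsilon|^a)$-capacity (Lemma \ref{l:capacity}), so this Dirichlet problem decouples into independent problems on the nodal components of $u_\varepsilon$, and the minimizer is generically \emph{discontinuous} across $Z(u_\varepsilon)$ (already for $u_\varepsilon(x,y)=y$, $a=2$ and boundary datum equal to $x$ on the upper semicircle and $-x$ on the lower one, the solution is $x$ above and $-x$ below). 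Hence $w_\varepsilon\notin C^{1,\alpha}_{\loc}$, the qualitative hypothesis of Theorem \ref{uniformwa}(ii) fails, and the whole chain of uniform estimates never starts; there is no variational way to restore continuity precisely because the nodal set is invisible to the energy space. A second, related problem is that killing the singularity changes the topology of the nodal picture: perturbing $u=xy$ to $xy-\varepsilon$ merges two opposite quadrants into a single component through a neck at the origin, so the decoupled $\varepsilon$-problems do not converge componentwise to the decoupled limit problems, and your uniqueness argument for $\widetilde w=w$ (which also needs control of $w_\varepsilon$ up to $\partial B_{3/4}$, where you only have interior convergence) does not go through as stated.

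The paper's regularization is designed exactly to avoid this: the singular point is \emph{kept}, together with its vanishing order ($S(u_\varepsilon)\cap B_{R_0}=\{0\}$, $\mathcal V(0,u_\varepsilon)=N$, and convergence of the blow-ups, Lemmas \ref{l:sus} and \ref{lem:xi_eps}); what is regularized are the coefficients, which are made equal to the identity near the singularity \eqref{e:approximating-A} so that the determinant is locally constant and the Hartman--Wintner quasiconformal hodograph transform (Lemma \ref{l:quasiconformal-solution}, Theorem \ref{teodim21}) yields the qualitative $C^{1,\alpha}$-regularity of $w_\varepsilon$ on each nodal component. The approximating solutions $W_\varepsilon$ are then defined component by component on the \emph{fixed} straightened domain $\tilde\Omega=\Theta(\Omega)$ as solutions of mixed boundary value problems (conormal condition on $\{y=0\}$, Dirichlet datum $W$ elsewhere), so that continuity across the nodal set is never an issue and the convergence $W_\varepsilon\to W$ follows from a direct energy estimate. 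The uniformity in $\varepsilon$ is supplied by the a priori estimate on nodal components (Theorem \ref{uniformwa_sector}), and only at the very end is Theorem \ref{uniformwa}(ii) invoked to make the resulting effective bound uniform in $\mathcal S_{N_0}$. If you want to salvage your route, you would have to replace the global Dirichlet problem by componentwise mixed problems on domains that do not change topology as $\varepsilon\to0$ — which essentially forces you back to the paper's construction.
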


The regularization-approximation scheme is performed in five steps:
\begin{itemize}
    \item[(i)] Prove effective regularity for equations having coefficients whose determinant is constant (see Theorem \ref{teodim21}). This is done by refining the quasiconformal transformation introduced by Hartman and Wintner in \cite{HartWint} (see also \cite[Lemma 3.6]{TerTorVit1}), with a particular attention to its behaviour close to the singular set.
    \item[(ii)] Given $A\in\mathcal{A}$ and $u$ a $A$-harmonic function, let us focus on the problem around an isolated singularity $0 \in S(u)$ with vanishing order $N>1$. Then, construct an approximating pair $(u_\varepsilon,A_\varepsilon)$, where $A_\varepsilon \in \mathcal{A}$ is a sequence of matrices whose determinant is constant in $B_\varepsilon$; $u_\varepsilon$ is an $A_\varepsilon$-harmonic function with the same vanishing order of $u$ at $0$. Conclude the construction by showing the convergence of the blow-up limits of $u_\varepsilon$ at the fixed singularity to that of the original function $u$.
    \item[(iii)] Given a solution $w$ to \eqref{eqwa}, and localizing at a singular point of the weight $u$, construct a $\varepsilon$-approximaint PDE with coefficients and weight terms as in (ii). As $\varepsilon\to0$, select a sequence of $\eps$-approximating solutions $w_\varepsilon$, converging to $w$.
    \item[(iv)] Use the blow-up procedure introduced in the proof of Theorem \ref{uniformwa} part (ii) to make such estimate uniform-in-$\varepsilon$.
    The uniformity of the estimate for the sequence, together with the convergence, implies \emph{a posteriori} $C^{1,1-}$-regularity for $w$ with an estimate possibly depending on $u$ and $Z(u)$, see Theorem \ref{t:effective}.
    \item[(v)] Apply Theorem \ref{uniformwa} part (ii) to make the effective estimate for $w$ uniform-in-$\mathcal S_{N_0}$.
\end{itemize}

Let us remark that, in view of \cite[\S 3.4]{TerTorVit2}, the $L^\infty(B_1)$-control in the estimate of Theorem \ref{uniformwa} (i) can be replaced by the weaker $L^2(B_1,|u|^a)$-control whenever $a\in(-a_{\mathcal{S}},a_{\mathcal{S}})\cup(1,+\infty)$. For the same reason, the estimates in Theorem \ref{uniformwa} (ii) and Theorem \ref{effective2} can be improved accordingly whenever $a\in[0,a_{\mathcal{S}})\cup(1,+\infty)$.

\subsection{Structure of the paper}
Section \ref{sec:gradient} is devoted to the proof of Theorem \ref{uniformgradientZ}; that is, the \emph{a priori} uniform-in-$\mathcal S_{N_0}$ gradient estimates for the ratio of two solutions sharing zero sets. Then, in Section \ref{sec:equationA}, we generalize the previous result to the case of solutions to \eqref{eqwa} with general exponent in the weight term and we prove Theorem \ref{uniformwa}.

In Section \ref{sec:fixed}, we construct a two dimensional regularization-approximation scheme around isolated singularities of the weight, which lead to the sharp \emph{a posteriori} $C^{1,1-}$-regularity of solutions to \eqref{eqwa} when the coefficients are Lipschitz continuous, see Theorem \ref{effective2}. The regularization crucially exploits a quasiconformal hodograph transformation, which provides regularity when coefficients have constant determinant around singular points.

Finally, in Section \ref{sec:liouville}, we prove the Liouville Theorem \ref{liou_a>-1}. The latter relies on unique continuation principles for solutions to degenerate or singular equations and on a conformal hodograph transformation.

\section{A priori uniform-in-\texorpdfstring{$\mathcal{S}_{N_0}$}{Lg} gradient estimates for the ratio in two dimensions}\label{sec:gradient}
In this section, we consider the planar case $n=2$ and we improve \cite[Theorem 1.1]{TerTorVit2} by showing uniform-in-$\mathcal S_{N_0}$ local H\"{o}lder estimates for the gradient of the ratio of solutions sharing nodal sets.

\subsection{Almgren monotonicity formula and structure of the nodal set}\label{structure.nodal}
Let $n\geq 2$ and $u \in H^1(B_1)$ be a weak solution to \eqref{equv}
where $A \in \mathcal{A}$ is a symmetric, uniformly elliptic, Lipschitz continuous matrix (see \eqref{coeff.ipotesi}). In such case, by elliptic regularity any weak solution is of class $C^{1,1-}_\loc(B_1) \cap H^2_\loc(B_1)$.
 Thus, by \cite{Han} (see also \cite{HanLin2,GarLin}) the nodal set $Z(u)=u^{-1}\{0\}$ of $u$ splits into a regular part $R(u)$, which is locally a $(n-1)$-dimensional hyper-surface of class $C^{1,1-}$, and the singular part $S(u)$ which has Hausdorff dimension at most $(n-2)$.

For what it concerns the study of singular points, it is necessary to introduce the notion of vanishing order.

\begin{Definition}[Vanishing order]
Given $u \in H^1(B_1)$, the \emph{vanishing order} of $u$ at $x_0 \in B_1$ is defined as
$$
\mathcal{V}(x_0,u) = \sup\left\{\beta\geq 0: \ \limsup_{r \to 0^+} \frac1{r^{n-1+2\beta}} \int_{\partial B_r(x_0)} u^2 \,d\sigma <+\infty\right\}.
$$
\end{Definition}

The number $\mathcal{V}(x_0,u) \in [0,+\infty]$ is characterized by the property that
$$
\limsup_{r \to 0^+} \frac1{r^{n-1+2\beta}} \int_{\partial B_r(x_0)} u^2  \,d\sigma= \begin{cases} 0 & \text{if $0 <\beta< \mathcal{V}(x_0,u)$} \\
+\infty  & \text{if $\beta > \mathcal{V}(x_0,u)$}.
\end{cases}
$$
The Lipschitz continuity of the coefficients of $A$ allows to prove the strong unique continuation principle, see \cite{GarLin}, which consists in the fact that non-trivial solutions can not vanish with infinite order at $Z(u)$. Ultimately, it implies that non-trivial solutions can not vanish identically in any open subset of their reference domain,
which is the classical unique continuation principle.

Under the assumption of Lipschitz continuity of the leading coefficients, it is possible to prove the validity of a Almgren-type monotonicity formula, which is a key tool to the local analysis of solutions near their nodal set. For the sake of completeness, we recall the construction of the generalized Almgren frequency function in the case of variable coefficients, by following the general ideas in \cite{GarLin,HanLin2,Han} (see also the recent developments in \cite{CheNabVal,NabVal}). In these papers, the authors introduce a specific change of metrics to define the frequency function which turns out to be almost monotone at small scales.
Since these results are nowadays classical, we simply review the main notations and consequences of the monotonicity formula and we suggest to the interested reader to address \cite[Section 3]{CheNabVal} for more recent contributions in this direction.

By following the construction in \cite{CheNabVal}, fixed $x_0\in B_1$ we consider the function
$$
r^2(x_0,x):=a^{ij}(x_0)(x-x_0)_i(x-x_0)_j,
$$
where $x=x_i e_i$ is the classical decomposition with respect to the canonical basis of $\R^n$ and $a^{ij}$ are the entries of the inverse matrix $A^{-1}$. Then, let us consider the new metric $g(x)=g(x_0,x)$ centered at $x_0$ defined as
\begin{equation}\label{e:g}
g_{ij}(x_0,x)=\eta(x_0,x) a^{ij}(x),\quad\mbox{with}\quad
\eta(x_0,x)=\frac{a_{kl}(z)a^{ks}(x_0)a^{lt}(x_0)(x-x_0)_s(x-x_0)_t}{r^2(x_0,x)}.
\end{equation}
It is worth pointing out that the geodesic distance in the metric $g(x)$ centered at $x_0$ corresponds to $r(x_0,x)$, for every $x_0,x \in B_1$. Moreover, for every $x_0 \in B_1$ we can define the associated geodesic ball, which does coincide with the Euclidean ellipsoid associated to the matrix $A(x_0)$. More precisely, for every $\rho>0$
\begin{align*}
E_\rho(x_0)&=\{x\in B_1\colon r(x_0,x) <\rho\}\\
&=\left\{x \in B_1\colon |A^{-1/2}(x_0)(x-x_0)|<\rho\right\} = x_0 + \rho A^{-1/2}(x_0)(B_1),
\end{align*}
where for convenience of notation we write $E_\rho(x_0)$ for the geodesic ball of radius $\rho$ centered
at $x_0$ with respect to the metric $g_{x_0}$. Obviously, if $A$ has constant coefficients (resp. $A\equiv \mathbb I$), it is easy to see that the geodesic ball coincides with the ellipsoids introduced in \eqref{e:ell} (resp. $E_\rho(x_0)$ coincides with the Euclidean ball of radius $\rho$ centered at $x_0$). Moreover, since $A \in \mathcal{A}$, the geodesic ball $E_\rho(x_0)$ is well-defined in the Euclidean ball of radius $\lambda^{1/2}(1-|x_0|)\rho$.

Before giving the definition of the Almgren-type frequency function, we can rewrite equation \eqref{equv} in terms of the Laplace-Beltrami operator associated to the metric $g$. Indeed, by a direct computation, $u$ is a solution to \eqref{equv} if and only if it solves
\begin{equation}\label{e:NabVal}
\Delta_g u + \left(\nabla_g \log(|g|^{-1/2}\eta)\cdot \nabla_g u\right)_g =0,
\end{equation}
where $|g|$ is the determinant of the metric $g$ and
$$
\Delta_g u =\frac{1}{\sqrt{|g|}}\partial_i\left( \sqrt{|g|} g^{ij}\partial_j u\right)\quad\mbox{and}\quad \nabla_g u = g^{ij}\partial_j u e_i.
$$
Equivalently, we can rewrite \eqref{e:NabVal} in terms of a weighted divergence form equations, that is
$$
\mathrm{div}_g(\omega \nabla_g u) =0,\qquad\mbox{with }\quad\omega= |g|^{-1/2}\eta(x_0,x).
$$
Finally, we are ready to introduce the generalized frequency function associated to solutions to \eqref{equv}. For every $x_0 \in B_1, r \in (0,\Lambda^{-1/2}(1-|x_0|))$, set
$$
 H(x_0,u,r) := \int_{\partial E_r(x_0)} \omega u^2\, d\sigma_g,\qquad  D(x_0,u,r) := \int_{E_r(x_0)} \omega |\nabla_g u|^2_{g}  \,d V_{g}.
$$
Notice that, if $A\in \mathcal{A}$ has constant coefficients, then the previous energies take form
$$
H(x_0,u,r)= \int_{\partial E_r(x_0)} u^2\,d\sigma, \qquad D(x_0,u,r) = \int_{E_r(x_0)} A \nabla u\cdot \nabla u\,dx.
$$
By adapting the results of \cite{GarLin, CheNabVal,NabVal, HanLin2} to our notations, we can state the following monotonicity result.
\begin{Proposition}[\cite{GarLin,CheNabVal,HanLin2}]\label{prop.monoton}
Let $u$ be a solution to \eqref{equv}, $x_0 \in B_1, r_0:=\Lambda^{-1/2}(1-|x_0|)$. Then there exists $C>0$ depending only on $n,\lambda,\Lambda$ and $L$, such that the map
\be\label{e:Almgren}
N(x_0,u,\cdot) \colon r \mapsto e^{C r}\frac{r D(x_0,u,r)}{H(x_0,u,r)}
\ee
is monotone non-decreasing for  $r<r_0$. Moreover, for $0<s<r< r_0$ it holds
that
\begin{equation}\label{doub}
\left|\frac{H(x_0,u,s)}{H(x_0,u,r)}\exp\left(-2\int_s^r \frac{N(x_0,u,t)}{t}\,dt\right)-1\right|\leq C r.
\end{equation}
\end{Proposition}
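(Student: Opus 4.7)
The plan is to reduce to the case $x_0=0$ by translation and work entirely in the metric $g=g(0,x)$ defined in \eqref{e:g}. The great advantage of this metric is that the geodesic spheres $\partial E_r(0)$ are the level sets $\{r(0,\cdot)=r\}$ of a smooth distance function, the outward unit $g$-normal $\nu_g$ coincides with $\partial_r$ up to normalization, and the equation takes the weighted divergence form $\mathrm{div}_g(\omega\nabla_g u)=0$ with a weight $\omega$ which is bounded between positive constants and whose logarithmic derivative $|\nabla_g\log\omega|_g$ is bounded by a constant depending only on $n,\lambda,\Lambda,L$. All the error terms that will appear in the computation are produced by $|\nabla \eta|$, $|\nabla\omega|$ and by the non-constancy of $g$ on $E_r$, and are therefore $O(1)$ at scale $r$ with $L$-dependent constants.

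I would then compute $H'(r)$ and $D'(r)$ separately. For the height, the coarea formula in the $g$-metric together with an application of the divergence theorem yields
\[
H'(r)=\frac{n-1}{r}H(r)+2\int_{E_r}\omega|\nabla_g u|_g^2\,dV_g+R_H(r)=\frac{n-1}{r}H(r)+2D(r)+R_H(r),
\]
where $|R_H(r)|\leq C\,H(r)$ uses $[a_{ij}]_{C^{0,1}}\leq L$ and the bound on $|\nabla_g\log\omega|_g$. For the Dirichlet energy, a Rellich--Pohozaev identity, obtained by multiplying the equation by $r\partial_r u$ and integrating by parts on $E_r$, gives
\[
D'(r)=\frac{n-2}{r}D(r)+2\int_{\partial E_r}\omega(\partial_r u)^2\,d\sigma_g+R_D(r),\qquad |R_D(r)|\leq C\,D(r).
\]
The principal terms are exactly those one would obtain in the constant-coefficient case, and the remainders $R_H$, $R_D$ come from differentiating $\omega$ and $g$ with respect to the radial direction; their size is controlled by the Lipschitz norm of $A$.

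Combining these two identities, one differentiates $\log N(x_0,u,g,r)=\log r+\log D(r)-\log H(r)$ and obtains
\[
\frac{N'(r)}{N(r)}=\frac{2}{H(r)}\left(\int_{\partial E_r}\omega(\partial_r u)^2 d\sigma_g-\frac{\bigl(\int_{\partial E_r}\omega u\,\partial_r u\,d\sigma_g\bigr)^2}{\int_{\partial E_r}\omega u^2 d\sigma_g}\right)+\mathcal{E}(r),
\]
where the first summand is non-negative by the Cauchy--Schwarz inequality and $|\mathcal{E}(r)|\leq C$. Hence $N'(r)/N(r)\geq -C$, which is exactly the statement that $r\mapsto e^{Cr}N(x_0,u,g,r)$ is non-decreasing. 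The doubling estimate \eqref{doub} then follows by integrating the identity
\[
\frac{d}{dr}\log H(r)=\frac{n-1}{r}+\frac{2N(x_0,u,g,r)}{r}+O(1)
\]
on $[s,r]$ and exponentiating: the $O(1)$ correction produces, after integration on an interval of length at most $r$, a multiplicative factor of the form $1+O(r)$, which is the content of \eqref{doub}.

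The main technical obstacle is the bookkeeping of the $L$-dependent remainder terms $R_H,R_D,\mathcal{E}$: one must verify that every boundary and bulk contribution arising from differentiating $\omega$, $\eta$ and $g$ in the radial direction can be absorbed into multiplicative errors of size $1+Cr$, uniformly for $x_0\in B_1$ and $r<\Lambda^{-1/2}(1-|x_0|)$. This is where the hypothesis $A\in\mathcal{A}$ (in particular $A(0)=\mathbb{I}$ for the normalization and the uniform Lipschitz bound $[a_{ij}]_{C^{0,1}}\leq L$) enters in a crucial way; the rest is a routine adaptation of the Garofalo--Lin computation, for which I would refer to \cite{GarLin,HanLin2,CheNabVal,NabVal}.
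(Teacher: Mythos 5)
The paper itself offers no proof of this Proposition: it is quoted as a classical result and the reader is sent to \cite{GarLin,CheNabVal,HanLin2}. Your sketch reconstructs exactly the argument those references contain (differentiate $H$ and $D$ in the Garofalo--Lin metric, use that the conormal on $\partial E_r$ is radial so that $D(r)=\int_{\partial E_r}\omega\, u\,\partial_r u\,d\sigma_g$, invoke a Rellich--Pohozaev identity for $D'$, conclude by Cauchy--Schwarz, and integrate $H'/H$ for the doubling), so in substance you are giving the proof the paper delegates to the literature, and the structure is sound.

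Two bookkeeping slips are worth fixing. First, your displayed identity for $N'(r)/N(r)$ is actually (up to the factor $r$) the identity for $N'(r)$: with $N=rD/H$ one has $N'=\tfrac{2r}{H^2}\bigl(\int_{\partial E_r}\omega u^2\int_{\partial E_r}\omega(\partial_r u)^2-(\int_{\partial E_r}\omega u\,\partial_r u)^2\bigr)+\mathcal{E}$, whereas $N'/N$ carries the prefactor $2/D$ against the Cauchy--Schwarz discrepancy, not $2/H$; as written your formula would force $N\equiv r$. This is harmless for the conclusion, since either version has a non-negative principal term and an $O(1)$ (resp.\ $O(N)$) error, which is all you use to get $(e^{Cr}N)'\ge 0$. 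Second, integrating your identity $\tfrac{d}{dr}\log H=\tfrac{n-1}{r}+\tfrac{2N}{r}+O(1)$ produces $\tfrac{H(s)}{H(r)}\exp(-2\int_s^r N/t\,dt)=(s/r)^{n-1}(1+O(r))$, which matches \eqref{doub} only if $H$ denotes the surface \emph{average} $r^{1-n}\int_{\partial E_r}\omega u^2\,d\sigma_g$; that is the normalization used in the cited references and evidently the intended reading of the statement, but you should say so explicitly, since otherwise the factor $(s/r)^{n-1}$ survives and \eqref{doub} as displayed would be false for $s\ll r$.
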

We refer as generalized Almgren frequency the formula in \eqref{e:Almgren}. By the monotonicity, we immediately infer the existence of the limit
$$
N(x_0,u,0^+)=\lim_{r\to 0^+}N(x_0,u,r),\quad\mbox{for every }x_0\in B_1,
$$
and, in view of the existence and uniqueness of the tangent map proved in \cite{Han,HanLin2}, such limit coincides with the vanishing order of $u$ at $x_0$, that is
$
\mathcal{V}(x_0,u)= N(x_0,u,0^+).
$
Secondly, Proposition \ref{prop.monoton} yields to the following doubling-type estimate: by \cite[Theorem 1.3]{GarLin} (see also \cite[Section 3.1]{HanLin2}) there exists $C=C(n,\lambda,\Lambda,L)$ such that
\begin{equation}\label{tipo.doubling}
\intn_{B_{R}(x_0)}u^2 \,dx\leq C\left(\frac{R}{r}\right)^{2N(x_0,u,R)}\intn_{B_{r}(x_0)}u^2 \,dx,
\end{equation}
for every $x_0 \in B_{1/2}, 0<r\leq R\leq r_0$.\\
We conclude this preliminaries by introducing a  quantity that will be used through the paper several times. In light of the monotonicity formula, since there exists $\overline{C}>0$, depending only on $n,\lambda, \Lambda, L$, such that
$$
N(x_0,u,r)\leq \overline{C} N(0,u,1)\leq \overline{C} N_0, \quad\mbox{for every }x_0 \in B_{7/8}, r\leq 1/16,
$$
we can defined 
\begin{equation}\label{overlineN0}
    \overline N_0:=\sup_{u\in \mathcal{S}_{N_0}}\max_{x_0\in \overline{B_{7/8}}, r\leq 1/16} N(x_0,u,r) \leq  \overline{C}N_0.
\end{equation}
It is well known that $\overline{N}_0$ allows  to bound the vanishing orders of $u$ on $Z(u)\cap B_{7/8}$ uniformly-in-$\mathcal{S}_{N_0}$. Thus, we define
  \begin{equation}\label{a_S}
     a_{\mathcal{S}}:=\min\left\{1,\frac{2}{\overline N_0}\right\}\in(0,1].
 \end{equation}

We proceed now by recalling the notion of solutions to \eqref{eqwa} considered through the paper. We refer to \cite{TerTorVit2} for a detailed discussion on the functional setting associated to equations of the form \eqref{eqwa}.

\begin{Definition}\label{definition.energy.a}
Let $A \in \mathcal{A}$, $u \in \mathcal{S}_{N_0}$ and $a > -a_{\mathcal{S}}$ (with $a_{\mathcal{S}}$ as in \eqref{a_S}). Then 
\begin{enumerate}
    \item[\rm{(i)}] if $a\geq 1$, then we say that $w\in H^1(B_1,|u|^a)$ is a solution to \eqref{eqwa} in $B_1$ if
\begin{equation*}
\int_{B_1}|u|^a A\nabla w\cdot\nabla\phi\,dx=0,\quad \text{for every } \phi\in C^\infty_c(B_1)\,;
\end{equation*}
\item[\rm{(ii)}] if $a \in (-a_{\mathcal{S}},1)$, then we say that $w\in H^1(B_1,|u|^a)$ is a solution to \eqref{eqwa} in $B_1$ satisfying
\be\label{e:equation-exponent-a-neumann}
|u|^a  A \nabla w \cdot \nabla u = 0\quad\mbox{on }R(u)\cap B_1,
\ee
if, on every connected component $\O_u$ of $\{u\neq 0\}$, we have
$$
\int_{\O_u \cap B_1}|u|^a A\nabla w\cdot\nabla\phi\,dx=0,\quad \text{for every } \phi\in C^\infty_c(B_1).
$$
\end{enumerate}
\end{Definition}

In \cite[Section 3]{TerTorVit2}, the authors deepen the study of the weighted Sobolev spaces associated to powers of solution to elliptic PDE. Precisely, in \cite[Proposition 3.5]{TerTorVit2} they prove the following interplay between the exponent $a_{\mathcal{S}}$ and the characterization of the Sobolev spaces $W^{1,2}(B_1,|u|^a)$ and $H^1(B_1,|u|^a).$ We recall the statement for the sake of readability.
\begin{Proposition}\cite[Proposition 3.5]{TerTorVit2}\label{p:H=W}
Let $u\in\mathcal{S}_{N_0}, a>-a_{\mathcal{S}}$ and
\begin{enumerate}
    \item[\rm{(i)}] if $a\geq 1$, set  $
W^{1,2}(B_1,|u|^a) := \left\{w \in W^{1,1}_\loc(B_1\setminus Z(u))\colon \norm{w}{H^1(B_1,|u|^a)}<+\infty\right\};$\vspace{0.1cm}
\item[\rm{(ii)}] if $a \in [a_{\mathcal{S}},1)$, set
$W^{1,2}(B_1,|u|^a) := \left\{w \in W^{1,1}_\loc(B_1\setminus S(u))\colon \norm{w}{H^1(B_1,|u|^a)}<+\infty\right\};$\vspace{0.1cm}
\item[\rm{(iii)}] if $a\in (-a_{\mathcal{S}},a_{\mathcal{S}})$, set $
W^{1,2}(B_1,|u|^a) := \left\{w \in W^{1,1}_\loc(B_1)\colon \norm{w}{H^1(B_1,|u|^a)}<+\infty\right\}.$\vspace{0.05cm}
\end{enumerate}
Then, $W^{1,2}(B_1,|u|^a) \equiv H^1(B_1,|u|^a)$.
\end{Proposition}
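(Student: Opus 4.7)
The plan is to establish the two inclusions separately. The inclusion $H^1(B_1,|u|^a) \subseteq W^{1,2}(B_1,|u|^a)$ follows from a standard completion argument: any Cauchy sequence $(w_j)\subset C^\infty(\overline{B_1})$ with respect to the weighted norm \eqref{e:norm-ua} converges in $L^2(B_1,|u|^a)$ to some $w$, and since the weight $|u|^a$ is locally bounded away from zero on the relevant open subset of $B_1$ for each case (namely $B_1\setminus Z(u)$ in (i), $B_1\setminus S(u)$ in (ii), and $B_1$ in (iii), thanks to Lemma \ref{l:a-N0}), the gradients form a Cauchy sequence in $L^2_\loc$ of that set; hence $w\in W^{1,1}_\loc$ of the relevant domain and Fatou's lemma bounds its weighted norm.

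For the reverse inclusion $W^{1,2}(B_1,|u|^a)\subseteq H^1(B_1,|u|^a)$, the approach depends on the range of $a$. In case (iii), Lemma \ref{l:a-N0}(ii) gives that $|u|^a$ is Muckenhoupt $\mathcal A_2$, so the classical Meyers--Serrin density theorem for Muckenhoupt weights (see, e.g., \cite{Zhikov}) immediately yields that $C^\infty(\overline{B_1})$ is dense in the weighted norm. In cases (i) and (ii), I would use the vanishing of the weighted capacity of the strong degeneracy set $E$ provided by Lemma \ref{l:capacity} (with $E=Z(u)$ in (i) and $E=S(u)$ in (ii)), approximating a given $w\in W^{1,2}(B_1,|u|^a)$ in three successive steps. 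First, truncate in the range by setting $w_j := \max(-j,\min(j,w))$; since $|w_j|\leq j$ and $|\nabla w_j|\leq |\nabla w|\chi_{\{|w|\leq j\}}$ a.e., dominated convergence gives $w_j\to w$ in the weighted norm. Second, cut off near $E$ by setting $v_{j,\eps}:=w_j(1-\phi_\eps)$ with $\phi_\eps$ the capacitary cutoffs from the proof of Proposition \ref{p:alternative-H}: the computation \eqref{e:strong-convergence-HW}, combined with the uniform bound $|w_j|\leq j$ and $\|\nabla \phi_\eps\|_{L^2(|u|^a)}\leq\eps$, yields $v_{j,\eps}\to w_j$ in the weighted norm as $\eps\to 0$. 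Third, mollify $v_{j,\eps}$ via a slight inward dilation $x\mapsto v_{j,\eps}(rx)$ with $r\nearrow 1$ composed with a standard kernel of radius $\delta\searrow 0$, producing a sequence in $C^\infty(\overline{B_1})$ converging to $v_{j,\eps}$ in the weighted norm. A diagonal extraction over $(j,\eps,\delta)$ concludes.

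The main obstacle is the mollification in case (ii): the support of $v_{j,\eps}$ still meets the regular part $R(u)$ where the weight vanishes, so the weighted and unweighted $H^1$-norms are \emph{not} locally equivalent on this support, and standard mollification cannot be controlled directly in the weighted norm. I would resolve this by exploiting the $C^{1,1-}$-regularity of $R(u)$ on $B_1\setminus S(u)$: in local tubular coordinates around any point of $R(u)\cap\mathrm{supp}\,v_{j,\eps}$, the implicit function theorem makes $|u|^a$ comparable to the one-dimensional weight $|x_n|^a$, which is Muckenhoupt $\mathcal A_2$ for the range $a\in[0,1)$ covered by case (ii); hence Meyers--Serrin for Muckenhoupt weights applies on each local tubular neighborhood, and a finite partition of unity over $\mathrm{supp}\,v_{j,\eps}$ (combined with the upper bound $|u|^a\leq\|u\|_{L^\infty(B_1)}^{a}$ valid because $a\geq 0$) combines the local approximations into a global $C^\infty(\overline{B_1})$ approximation. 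In case (i) the mollification step is considerably simpler, because the cutoff makes $|u|^a$ uniformly bounded above and below on $\mathrm{supp}\,v_{j,\eps}$ and the weighted and unweighted Sobolev norms are equivalent there, so a standard mollification suffices without further preparation.
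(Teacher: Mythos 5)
Your overall architecture matches the paper's: the middle range $a\in(-a_{\mathcal S},a_{\mathcal S})$ is dispatched by the classical $\mathcal A_2$ theory, and the degenerate range by capacitary cutoffs near the strong degeneracy set followed by smoothing. However, your argument for the inclusion $H^1\subseteq W^{1,2}$ contains a false claim in cases (ii) and (iii): the weight $|u|^a$ is \emph{not} locally bounded away from zero on $B_1\setminus S(u)$, nor on $B_1$, since it vanishes on all of $R(u)$ (resp.\ $Z(u)$). Consequently you cannot conclude that the gradients of a Cauchy sequence are Cauchy in $L^2_\loc$ of those sets. The correct route — and the one the paper takes in its Step~1 — is to use that on compact subsets of $B_1\setminus S(u)$ (resp.\ of $B_1$ in case (iii)) the weight is locally $\mathcal A_2$, so that $|u|^{-a}$ is locally integrable and the Cauchy--Schwarz inequality
$\int_K |w|+|\nabla w|\,dx\leq (\int_K|u|^{-a})^{1/2}(\int_K|u|^a(|w|^2+|\nabla w|^2))^{1/2}$
yields membership in $W^{1,1}_\loc$. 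Your reasoning is only valid verbatim in case (i), where the weight genuinely is bounded below on compact subsets of $B_1\setminus Z(u)$.

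For the reverse inclusion your route diverges from the paper's at the smoothing step in case (ii). The paper avoids local flattening entirely: it replaces $|u|^a$ by the modified weight $\rho_\eps$, equal to $\|u\|_{L^\infty(B_1)}^a$ on $N_\eps(S(u))$ and to $|u|^a$ elsewhere, observes that $\rho_\eps$ is globally $\mathcal A_2$, applies the $\mathcal A_2$ version of $(H\equiv W)$ to get a smooth $\tilde w_\eps$ with $\|w_\eps-\tilde w_\eps\|_{H^1(\rho_\eps)}\leq\eps$, and transfers the estimate using $|u|^a\leq\rho_\eps$. Your tubular-coordinate/partition-of-unity construction is workable (near points of $R(u)$ away from $S(u)$ one indeed has $|u|\sim\mathrm{dist}(\cdot,R(u))$ and $|x_n|^a$ is $\mathcal A_2$ for $a\in[a_{\mathcal S},1)\subset(0,1)$), but it is considerably more laborious and you would still need to control the commutator terms $v\nabla\chi$ produced by the partition of unity in the weighted norm. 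On the other hand, your preliminary truncation $w_j=\max(-j,\min(j,w))$ is a genuine improvement: the paper's estimate \eqref{e:strong-convergence-HW} invokes $\|w\|_{L^\infty(B_1)}$, which a general element of $W^{1,2}(B_1,|u|^a)$ need not possess, so your extra step closes a small gap in the cutoff argument that the paper leaves implicit.
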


We notice that, in light of the results in \cite[\S 3.2]{TerTorVit2}, Theorem \ref{uniformgradientZ} can be reformulated in terms of solution $w$ to degenerate PDEs as follows.
\begin{Theorem}\label{t:grad-version-w}
Let $n=2, A\in\mathcal A, u \in \mathcal{S}_{N_0}$ and $w \in H^1(B_1,u^2)$ be a solutions to
$$
\mathrm{div}(u^2 A\nabla w) = 0\quad\mbox{in }B_1,
$$
in the sense of Definition \ref{definition.energy.a}. Then, if $w\in C^{1,\alpha}_{\loc}(B_1)$, there exists a positive constant $C$ depending only on $\mathcal{S}_{N_0}$ and $\alpha\in(0,1)$ such that
\begin{equation*}
\left\|w\right\|_{C^{1,\alpha}(B_{1/2})}\leq C\|w\|_{L^2(B_1,u^2)}.
\end{equation*}
\end{Theorem}

Among the various peculiarities of the planar case, we remark that the structure of the singular set is more explicit. Indeed,
it is well known that the singular set $S(u)$ is a locally finite set of isolated points (see \cite{GarLin, Han}) at which the nodal sets consists in $2N$ intersecting curves. We would like to remark here that the proof of the following result is $n$-dimensional apart from what is contained in Sections \ref{s:hooking}, \ref{s:inter} which is purely $2$-dimensional.

\subsection{Proof of Theorem \ref{uniformgradientZ}}\label{s:section-proof-gradient2}
Let $A_k\in \mathcal A$, $u_k\in \mathcal S_{N_0}$ and $v_k$ be solutions to
\be\label{e:ipotesi}
L_{A_k}u_k= L_{A_k}v_k=0 \quad\mbox{in }B_1,\qquad \mbox{with }Z(u_k)\subseteq Z(v_k),
\ee
and $w_k:=v_k/u_k\in C^{1,\alpha}_{\loc}(B_1)$ be their ratio satisfying, up to normalization, $\norm{w_k}{L^\infty(B_1)}=1$.

Now, consider a radially decreasing cut-off function $\eta\in C^\infty_c(B_1)$ with $0\leq\eta\leq1$, $\eta\equiv1$ in $B_{1/2}$ and such that $\mathrm{supp}\eta=B_{5/8}=:B$. Then, if we show that $(\eta w_k)_k$ is uniformly bounded in $C^{1,\alpha}(B_1)$, using that $\eta \equiv 1$ in $B_{1/2}$, we infer the same bound for $(w_k)_k$ in $C^{1,\alpha}(B_{1/2})$. Notice also that, since for every $x_0 \in B_1\setminus B_{5/8}$ and $k>0$ we have $(\eta w_k)(x_0)=|(\nabla w_k) \eta|(x_0)=0$, then it is sufficient to ensure that the following seminorms
$$
\max_{i=1,\dots,n}\left[\partial_{i}(\eta w_k)\right]_{C^{0,\alpha}(B_1)} \leq C,
$$
uniformly as $k\to +\infty$. Thus, by contradiction suppose that, up to a subsequence,
$$
\max_{i=1,\dots,n}\left[\partial_{i}(\eta w_k)\right]_{C^{0,\alpha}(B_1)}\to +\infty,$$
that is, there exist two sequences of points $x_k,\zeta_k\in B$, such that
\begin{equation}\label{e:contr}
L_k = \frac{|\partial_{i_k}(\eta w_k)(x_k)-\partial_{i_k}(\eta w_k)(\zeta_k)|}{|x_k-\zeta_k|^\alpha}\to+\infty.
\end{equation}
Naturally, up to relabeling we can assume that $i_k = 1$ for every $k>0$. Now, given $r_k=|x_k-\zeta_k| \in (0,\text{diam} B)$ we set $\xi_k \in Z(u_k)$ to be the closest point of $Z(u_k)$ to $x_k$, that is
\be\label{e:delta_k}
\delta_k = \mathrm{dist}(x_k,Z(u_k))=|\xi_k - x_k|.
\ee
Clearly, $\xi_k \in R(u)$. Now, we introduce the following blow-up sequences centered at $\hat{x}_k\in B_1$ to be chosen later:
$$
 V_k(x)=\frac{\eta(\hat{x}_k + r_k x)}{L_kr_k^{1+\alpha}}\left(w_k(\hat{x}_k+r_k x)-w_k(x_k)\right),\,\, W_k(x)=\frac{\eta(\hat{x}_k)}{L_kr_k^{1+\alpha}}\left(w_k(\hat{x}_k+r_k x)-w_k(\hat{x}_k)\right),
$$
defined in $(B_1-\hat{x}_k)/r_k =: \O_k$. Through the proof we consider centers satisfying $\hat{x}_k\in B_{3/4}$, for $k$ sufficiently large, so that the limit domain (all the limits are up to subsequences) $\O_\infty = \lim_{k\to +\infty} \O_k$ is not empty and contains a ball (see Lemma \ref{l:involuto} for the precise choice of the centers).\\
In what follows, we collect some general property of the two sequences (see
\cite[Theorems 5.1 5.2]{SirTerVit1} and \cite[Theorem 2.1]{TerTorVit1} for more details on the features of these sequences).  It is worth noting that, in view of the definition of $V_k$, for every $x,y \in \O_k$ it holds that
\be\label{e:mancante}
|\partial_i V_k(x)- \partial_i V_k(y)| \leq |x-y|^{\alpha} + \left|\frac{w_k(\hat{x}_k)}{L_k}\right|r_k^{1-\alpha}[\eta]_{C^{0,1}}|x-y|,
\ee
for every $i=1,\dots,n$. Now, let us focus on the compactness of the sequence $V_k$: let $K$ be a compact set of $\O_\infty$ and $x,y\in K$. Then, for $k$ sufficiently large, we have $x,y\in \O_k$ and so \eqref{e:mancante} holds true, for every $i=1,\dots,n$ and $k$ sufficiently large. On the other hand, since $L_k\to +\infty, K$ is compact and $w_k$ is bounded in $L^\infty(B)$, there exists $\bar{k}=\bar{k}(K)$ such that, for every $k\geq \bar{k}$ we have
$$
\left|\frac{w_k(\hat{x}_k)}{L_k}\right|r_k^{1-\alpha}[\eta]_{C^{0,1}}|x-y|^{1-\alpha}\leq 1
$$
and so
\be\label{e:bound.above.H}
\max_{i=1,\dots,n} \left[\partial_i V_k\right]_{C^{0,\alpha}(K)} \leq 2,\qquad \mbox{for every }k\geq \bar{k}(K).
\ee
On the other hand, since
\be\label{e:bound.below.H}
\left|\partial_1 V_k(0)-\partial_1 V_k\left(\frac{x_k-\zeta_k}{r_k}\right)\right| = 1 + O\left(\frac{r_k^{1-\alpha}}{L_k}\right)
\ee
as $k\to +\infty$, it implies that on every compact set of $\O_\infty$ the quantities $[\partial_i V_k]_{C^{0,\alpha}(K)}$ are uniformly bounded by above and $[\partial_1 V_k]_{C^{0,\alpha}(K)}$ is also uniformly bounded by below. Let us then define the sequences
\be\label{e:blowlin}
\overline{V}_k(x)= V_k(x)- \nabla V_k(0)\cdot x,\qquad \overline{W}_k(x)= W_k(x)- \nabla W_k(0)\cdot x, \quad\mbox{for }x \in \O_k
\ee
where
\be\label{e:grad}
\nabla V_k(0) = \frac{\eta(\hat{x}_k)}{L_k r_k^\alpha}\nabla w_k(\hat{x}_k) = \nabla W_k(0).
\ee
The importance of such sequences lies on the fact that $|\nabla \overline{V}_k|(0) = |\nabla \overline{W}_k|(0)$. Indeed, with some few exceptions which will be treated later (see Section \ref{s:inter}), in general $|\nabla W_k(0)|$ needs not to be uniformly bounded with respect to $k$.\\
Notice that, by rewriting the definitions of $W_k$ and $\overline{W}_k$, we get
$$
w_k(\hat{x}_k + r_k x) = w_k(\hat{x}_k) + r_k(\nabla w_k)(\hat{x}_k)\cdot x + \frac{L_k r_k^{1+\alpha}}{\eta(\hat{x}_k)}\overline{W}_k(x).
$$
On the other hand, since $\overline{V}_k(0)=0= |\nabla \overline{V}_k|(0)$ and $$
[\partial_i \overline{V}_k]_{C^{0,\alpha}(K)} =[\partial_i V_k]_{C^{0,\alpha}(K)},\quad\mbox{for every }i=1,\dots,n,\quad K\subset \O_\infty\mbox{ compact},
$$
we can apply the Ascoli-Arzel\'{a} theorem and deduce that, up to consider a subsequence, $\overline{V}_k \to \overline{V}$ in $C^{1,\beta}_\loc(\O_\infty)$, for every $\beta \in (0,\alpha)$. Moreover, by \eqref{e:bound.above.H} and \eqref{e:bound.below.H}, we have that the limit $\overline{V}\in C^{1,\alpha}_\loc(\O_\infty)$ satisfies
\be\label{e:lim}
\max_{i=1,\dots,n} \left[\partial_i \overline{V}\right]_{C^{0,\alpha}_\loc(\O_\infty)} \leq 2,\qquad \partial_1 \overline{V}(0)-\partial_1 \overline{V}\left(\xi\right)=1, \quad\mbox{with }\xi = \lim_{k\to +\infty}\frac{x_k-\zeta_k}{r_k}\in \mathbb S^{n-1},
\ee
namely $\overline{V}$ has non constant gradient. Before considering the sequence $\overline{W}_k$, we need to show that both the sequences $r_k$ and $\delta_k$ (see \eqref{e:delta_k}) approach zero as $k\to +\infty$.\\

Ultimately, the following lemma allows to consider blow-up sequences centered either at a point $x_k$ associated to \eqref{e:contr} or at its closest projection $\xi_k$ on the nodal set $Z(u_k)$.
\begin{Lemma}\label{l:involuto}
Under the previous assumptions one has $r_k\to 0^+$ and $\delta_k\to 0^+$. Moreover, we can choose as centers of the blow-up analysis the following points
\be\label{e:z.cap}
\hat{x}_k = \begin{cases}
x_k,  &\mbox{if } \lim_{k\to +\infty}\frac{\delta_k}{r_k}= +\infty\\
\xi_k, &\mbox{if } \lim_{k\to +\infty}\frac{\delta_k}{r_k}<+\infty;
\end{cases}
\ee
that is, there exists $\overline{k}>0$ such that $\hat{x}_k\in B_{3/4}$ for both the cases in \eqref{e:z.cap} and $r_k\leq 1/8$ for any $k\geq\overline k$.
\end{Lemma}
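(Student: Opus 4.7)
Modifying the contradiction setup if necessary, we may assume $(x_k,\zeta_k)$ are chosen so as to almost realize the H\"older seminorm of $\partial_1(\eta w_k)$ over $B$, specifically $L_k \geq \frac12 [\partial_1(\eta w_k)]_{C^{0,\alpha}(B)}$; the contradiction hypothesis then forces this seminorm to diverge along with $L_k$. Set $f_k := \eta w_k$, which is compactly supported in $B_{5/8}$ and satisfies $\|f_k\|_{L^\infty(\R^n)} \leq \|w_k\|_{L^\infty(B_1)} = 1$. The standard Gagliardo--Nirenberg interpolation, obtained by Taylor expanding $f_k(x+re_i) - f_k(x) - r\partial_i f_k(x)$ and optimizing in $r$, gives
\[
\|\nabla f_k\|_{L^\infty} \leq C\|f_k\|_{L^\infty}^{\alpha/(1+\alpha)}[\nabla f_k]_{C^{0,\alpha}}^{1/(1+\alpha)} \leq C L_k^{1/(1+\alpha)}.
\]
Combined with $L_k r_k^\alpha = |\partial_1 f_k(x_k) - \partial_1 f_k(\zeta_k)| \leq 2\|\nabla f_k\|_{L^\infty}$, this yields $r_k^\alpha \leq CL_k^{-\alpha/(1+\alpha)} \to 0^+$. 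The delicate point is precisely the reduction to pairs $(x_k,\zeta_k)$ comparable to the supremum, so that the interpolation produces a useful estimate --- a standard convention in Simon-type blow-up schemes.

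\textbf{Step 2: $\delta_k \to 0^+$.} Suppose by contradiction $\delta_k \geq c>0$ along a subsequence; without loss of generality $c<3/8$ so that $B_{c/2}(x_k) \subset B_{13/16}$. Every point of $B_{c/2}(x_k)$ has distance at least $c/2$ from $Z(u_k)$, hence Remark \ref{r:growht} gives the uniform lower bound $|u_k| \geq C_1(c/2)^{\overline N_0} =: c_1>0$ on that ball. Meanwhile, the doubling estimate \eqref{tipo.doubling} bounds $\|u_k\|_{L^2(B_{7/8})}$, and interior Schauder for $L_{A_k}u_k=0$ produces $\|u_k\|_{C^{1,\alpha}(B_{13/16})} \leq C$ uniformly-in-$\mathcal{S}_{N_0}$. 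Rewriting the equation for $w_k$ on $B_{c/2}(x_k)$ in divergence form with a first-order reaction,
\[
\partial_j\bigl(a_{ij}^k \partial_i w_k\bigr) + \tfrac{2\partial_j u_k}{u_k} a_{ij}^k \partial_i w_k = 0,
\]
the leading coefficients are uniformly Lipschitz and the reaction coefficient is uniformly H\"older bounded (since $u_k \in C^{1,\alpha}$ and $|u_k| \geq c_1$). Classical elliptic regularity yields $\|w_k\|_{C^{1,\alpha}(B_{c/4}(x_k))} \leq C\|w_k\|_{L^\infty(B_1)} \leq C$ uniformly in $k$. Since $r_k\to 0$ by Step 1, $\zeta_k \in B_{c/4}(x_k)$ for $k$ large; hence $L_k \leq [\partial_1(\eta w_k)]_{C^{0,\alpha}(B_{c/4}(x_k))} \leq C$, contradicting $L_k \to \infty$.

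\textbf{Step 3: Centers in $B_{3/4}$ and $r_k \leq 1/8$.} The bound $r_k \leq 1/8$ for $k$ large is immediate from Step 1. In Case 1 ($\delta_k/r_k \to \infty$) set $\hat{x}_k = x_k \in B = B_{5/8} \subset B_{3/4}$ for all $k$. In Case 2 ($\delta_k/r_k$ bounded) set $\hat{x}_k = \xi_k$, the nearest point of $Z(u_k)$ to $x_k$; then $|\xi_k| \leq |x_k| + \delta_k \leq 5/8 + \delta_k < 3/4$ for $k$ large thanks to Step 2, so $\hat{x}_k \in B_{3/4}$.
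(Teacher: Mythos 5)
Your proof is correct. Steps 2 and 3 coincide with the paper's argument: the paper also rules out $\delta_k\geq c>0$ by observing that $u_k^2A_k$ is uniformly elliptic on a fixed ball around $\overline x=\lim x_k$ (via the nondegeneracy bound of Remark \ref{r:growht}), so that classical Schauder theory bounds $L_k$; and the admissibility of $\hat x_k=\xi_k$ is deduced exactly as you do, from $x_k\in B_{5/8}$ and $\delta_k\to0$. (Your ball $B_{c/2}(x_k)$ may exit $B_{3/4}$ where Remark \ref{r:growht} is stated, but this is cured by taking $c<1/4$ without loss of generality.) Step 1 is where you genuinely diverge: the paper proves $r_k\to0$ by a blow-up compactness argument — if $r_k\to\bar r>0$ then $\sup|V_k|\leq 2\|\eta w_k\|_{L^\infty}/(L_kr_k^{1+\alpha})\to0$, forcing the limit $\overline V$ to be linear and contradicting the non-constancy of $\nabla\overline V$ established in \eqref{e:lim} — whereas you use the elementary interpolation $\|\nabla f_k\|_{L^\infty}\leq C\|f_k\|_{L^\infty}^{\alpha/(1+\alpha)}[\nabla f_k]_{C^{0,\alpha}}^{1/(1+\alpha)}$ to get the quantitative rate $r_k^\alpha\leq CL_k^{-\alpha/(1+\alpha)}$. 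Your route is shorter and self-contained (it does not lean on the compactness of $\overline V_k$ prepared before the lemma), at the price of normalizing $(x_k,\zeta_k)$ to nearly realize the seminorm; note that this normalization is not an extra hypothesis in disguise, since the paper's own estimates \eqref{e:mancante}--\eqref{e:bound.above.H} already require $[\partial_i(\eta w_k)]_{C^{0,\alpha}}\lesssim L_k$, i.e.\ the same convention. Both arguments are valid.
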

\begin{proof}
Let us split the proof into two steps.\\

 Step 1: we prove that $r_k \to 0^+$. First, suppose by contradiction that $r_k\to \bar{r}>0$ and choose in the definition of the sequence $V_k$ the centers $\hat{x}_k = x_k$. The letter choice is always admissible since $x_k\in B_{3/4}$. Then, for $k$ sufficiently large, $\O_\infty$ contains an open ball $B_R$ for some $R>0$ and one gets
  $$
  \sup_{x\in \O_k}|V_k(x)| \leq \frac{2}{L_k r_k^{1+\alpha}} \norm{\eta w_k}{L^\infty(\text{supp}\eta)} \to 0,
  $$
  that is $V_k \to 0^+$ uniformly on every compact set in $\O_\infty$. Therefore, if we consider the sequence $\overline{V}_k$, it implies
  $$
  \overline{V}(x) = \lim_{k\to +\infty}\nabla V_k(0)\cdot x \quad \mbox{in }\O_\infty.
  $$
  Notice also that the sequence $(\nabla V_k(0))_k$ is bounded and, up to a subsequence, it converges to some non-trivial vector $\nu \in \R^n$, so that $\overline{V}(x) = \nu \cdot x$. Indeed, if one component $(\partial_i V_k(0))_k$ is unbounded, then
  $$
  |\overline{V}|(R e_i) = R \lim_{k\to +\infty} |(\partial_i V_k)(0)| = +\infty,
  $$
  in contradiction with the fact that $\overline{V} \in C^{1,\alpha}(B_R)$. Finally, the condition $\overline{V}(x) = \nu \cdot x$ contradicts the fact that the gradient of $\overline{V}$ is not constant.\\

  Step 2: we prove that $\delta_k \to 0^+$. If $\delta_k \to \overline{\delta}>0$ up to subsequences, having $|x_k-\zeta_k|=r_k \to 0^+$ by Step 1, it would exists $\overline{x}=\lim_{k\to +\infty}x_k$ such that
  $$
  B_{\overline{\delta}/2}(\overline{x})\cap Z(u_k)= \emptyset \qquad\mbox{and}\qquad x_k,\zeta_k\in   B_{\overline{\delta}/2}(\overline{x}),
  $$
  for every $k$ sufficiently large. Therefore, since the matrix $u^2A$ is uniformly elliptic in $B_{\overline{\delta}/2}(\overline{x})$, 
  by standard elliptic regularity we infer that $w \in C^{1,\alpha}(B_{\overline{\delta}/2}(\overline{x}))$ and so
  $$
  L_k = \frac{|\partial_{1}(\eta w_k)(x_k)-\partial_{1}(\eta w_k)(\zeta_k)|}{|x_k-\zeta_k|^\alpha} \leq \max_{i=1,\dots,n}\left[\partial_{i}(\eta w_k) \right]_{C^{0,\alpha}(B_{\overline{\delta}/2}(\overline{x}))}<C,
  $$
uniformly with respect to $k\to +\infty$. The condition above gives a contradiction for $k$ sufficiently large.\\

  We conclude the proof by stressing that the vanishing condition of the sequence $\delta_k$ makes the choice $\hat{x}_k = \xi_k$ admissible too. Indeed, as we have already mentioned, one is free to choose the centers of blow-ups $\hat{x}_k$ as long as they belong to $B_{3/4}$ for $k$ sufficiently large. Thus, $\xi_k\in B_{3/4}$ since $x_k \in B=B_{5/8}$ and $\delta_k\to 0^+$.
 \end{proof}
 Finally, summing up all the previous observations, we have showed that $\overline{V}_k$ converges in $C^{1,\beta}_\loc(\R^n)$ for every $\beta \in (0,\alpha)$, and then also uniformly on every compact set of $\R^n$, to a function $\overline{V}$ whose gradient is non-constant and globally $\alpha$-H\"{o}lder continuous in $\R^n$ (i.e. replace $\O_\infty=\R^n$ in \eqref{e:lim}).\\

In the following Lemma we show a crucial property of the two blow-up sequences, that is that they have the same asymptotic behavior as $k\to +\infty$.
\begin{Lemma}
Let $K \subset \R^n$ be a compact set, then $
\lim_{k\to +\infty}\norm{\overline{W}_k-\overline{V}_k}{L^\infty(K)}=0$.
\end{Lemma}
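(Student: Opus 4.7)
The plan is to exploit the identity $\nabla V_k(0)=\nabla W_k(0)$ from \eqref{e:grad}, so that the linear parts in $\overline V_k$ and $\overline W_k$ cancel and the statement reduces to proving $\|W_k-V_k\|_{L^\infty(K)}\to 0$ for every compact $K\subset\R^n$. A direct algebraic manipulation of the definitions produces the decomposition
\[
W_k(x)-V_k(x) \;=\; -\frac{[\eta(\hat x_k + r_k x) - \eta(\hat x_k)]\,[w_k(\hat x_k + r_k x) - w_k(\hat x_k)]}{L_k r_k^{1+\alpha}} \;+\; \frac{\eta(\hat x_k + r_k x)\,[w_k(x_k) - w_k(\hat x_k)]}{L_k r_k^{1+\alpha}},
\]
splitting the problem into one summand measuring the variation of the cut-off $\eta$ and one measuring the shift from $\hat x_k$ to $x_k$.

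For the first summand, I would combine the Lipschitz continuity of $\eta$ (yielding $|\eta(\hat x_k + r_k x) - \eta(\hat x_k)|\leq [\eta]_{C^{0,1}}r_k |x|$) with the uniform $C^{0,\beta}$-Hölder estimate for the ratios provided by Theorem \ref{uniformHolderZ}, applicable for every $\beta\in(0,1)$ with constant depending only on $\mathcal S_{N_0}$ and $\beta$, which gives $|w_k(\hat x_k + r_k x) - w_k(\hat x_k)|\leq C_\beta r_k^\beta |x|^\beta$. Choosing $\beta\in(\alpha,1)$, the first summand is bounded by $C_\beta\,[\eta]_{C^{0,1}} |x|^{1+\beta} r_k^{\beta-\alpha}/L_k$, which vanishes uniformly on any compact set as $k\to +\infty$ since $\beta-\alpha>0$, $r_k\to 0^+$ and $L_k\to+\infty$.

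The delicate part is the second summand. By the dichotomy of Lemma \ref{l:involuto}, in Case 1 we have $\hat x_k=x_k$ and the term is identically zero, so the proof is complete there. In Case 2, $\hat x_k=\xi_k\in R(u_k)$ with $\delta_k=|\xi_k-x_k|\leq Cr_k$, and applying the same uniform Hölder estimate yields $|w_k(\xi_k)-w_k(x_k)|\leq C_\beta \delta_k^\beta\leq C' r_k^\beta$, so that the contribution is of order $1/(L_k r_k^{1+\alpha-\beta})$.

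The main obstacle is to extract the decay of this last quantity: we only know a priori that $L_k\to+\infty$ and $r_k\to 0^+$, without any quantitative relation between the two rates. I expect to handle it by exploiting a first order Taylor expansion of $w_k$ at the regular point $\xi_k$, writing $w_k(x_k)-w_k(\xi_k)=\nabla w_k(\xi_k)\cdot(x_k-\xi_k)+\mathcal{R}_k$. The linear contribution $\nabla w_k(\xi_k)\cdot(x_k-\xi_k)$ multiplied by $\eta(\hat x_k+r_k x)/(L_kr_k^{1+\alpha})$ can be reabsorbed into the subtracted linear part $\nabla V_k(0)\cdot x$ thanks to \eqref{e:grad}, up to terms that are in turn controlled by the first (already vanishing) summand via the Lipschitz regularity of $\eta$; while the remainder $\mathcal{R}_k$, estimated via the uniform $C^{0,\beta}$ bound with $\beta$ tending to $1^-$ and the relation $\delta_k\lesssim r_k$, produces the required decay. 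This bookkeeping — controlling the first-order correction without appealing to a non-uniform $C^{1,\alpha}$ bound on $w_k$ — is what I foresee as the technical heart of the argument.
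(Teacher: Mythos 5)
Your reduction to $\|W_k-V_k\|_{L^\infty(K)}\to 0$ is right, and your treatment of the first summand (Lipschitz bound on $\eta$ times the uniform $C^{0,\beta}$ bound from Theorem \ref{uniformHolderZ}, with $\beta\ge\alpha$) is exactly the paper's entire proof. The second summand you isolate, $\eta(\hat x_k+r_kx)\bigl(w_k(x_k)-w_k(\hat x_k)\bigr)/(L_kr_k^{1+\alpha})$, is an artifact of reading the displayed definition of $V_k$ literally: the paper's own proof of the lemma, and also \eqref{e:grad} (which asserts $\nabla V_k(0)=\eta(\hat x_k)\nabla w_k(\hat x_k)/(L_kr_k^{\alpha})$ with no $\nabla\eta$ term), are only consistent with $V_k$ subtracting $w_k(\hat x_k)$ rather than $w_k(x_k)$; with that (evidently intended) definition your second summand vanishes identically and the proof closes.

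That said, your proposed handling of the second summand would not work if the term were genuinely present, and you should be aware why. First, $\nabla w_k(\xi_k)\cdot(x_k-\xi_k)$ is a constant in $x$, so it cannot be ``reabsorbed'' into $\nabla V_k(0)\cdot x$; moreover that linear correction cancels identically between $\overline V_k$ and $\overline W_k$ by \eqref{e:grad}, so there is nothing left to absorb it into. Second, $|\nabla w_k(\xi_k)|$ carries no uniform-in-$k$ bound (controlling exactly this kind of quantity is the purpose of the hooking arguments later in the section), so even the size of the constant is out of reach. Third, the Taylor remainder $\mathcal R_k$ is $o(\delta_k)$ only if one has a uniform modulus of continuity for $\nabla w_k$, i.e.\ precisely the a priori $C^{1,\alpha}$ bound being proved; the uniform $C^{0,\beta}$ estimate only gives $|w_k(x_k)-w_k(\xi_k)|\lesssim\delta_k^{\beta}\lesssim r_k^{\beta}$, leaving the indeterminate quantity $r_k^{\beta-1-\alpha}/L_k$ that you correctly flagged. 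So the honest conclusion is: modulo the typo in the definition of $V_k$, your argument coincides with the paper's; taken at face value, the extra term cannot be dispatched along the lines you sketch.
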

\begin{proof}
The proof follows immediately by the definition of the two sequences. Indeed, by \cite[Theorem 1.1]{TerTorVit2} we already know that $w_k \in C^{0,\gamma}(\text{supp}\eta)$, for every $\gamma \in (0,1)$ and uniformly in $k$. Then, in view of \eqref{e:grad}, for every $\gamma \in (0,1)$ we get that
$$
|\overline{W}_k(x)-\overline{V}_k(x)| =\frac{1}{L_k r_k^{1+\alpha}}|\eta(\hat{x}_k+r_k x) - \eta(\hat{x}_k)||w_k(\hat{x}_k+r_k x) - w_k(\hat{x}_k)|
\leq \frac{C}{L_k r_k^{1+\alpha}} r_k|z| (r_k |x|)^\gamma,
$$
which converges to zero as $k\to+\infty$.
\end{proof}
Finally, let us consider the equation satisfied by $W_k$ and $\overline{W}_k$. First, by translating and rescaling \eqref{equv}, we get
$$
\text{div}\left(U_k^2 \overline{A}_k \nabla W_k\right) = 0 \quad\mbox{in }\O_k,
$$
and consequently
$$
-\text{div}\left(U_k^2 \overline{A}_k \nabla \overline{W}_k\right) = 2U_k (\nabla U_k \cdot \overline{A}_k\nabla W_k(0)) \quad\mbox{in }\O_k,
$$
where $\nabla W_k(0)$ is defined in \eqref{e:grad}, and
\be\label{e:blow-up-finalproof}
U_k(x) = \frac{u_k(\hat{x}_k + r_k x)}{H(\hat{x}_k,u_k,r_k)^{1/2}}, \qquad \overline{A}_k(x) = A_k(\hat{x}_k + r_k x)
\ee
are the normalized blow-up sequences associated to $u_k$. We stress that the previous equations must be understood in the sense of Definition \ref{definition.energy.a}. Now, by a Caccioppoli-type inequality we have
\be\label{e:cacio-e-pepe}
\int_{\O_k}\phi^2 U_k^2|\nabla W_k|^2\,dx \leq C \int_{\O_k}U_k^2 |\nabla \phi|^2 W_k^2\,dx\leq \frac{C \norm{\eta}{L^\infty}^2}{L_k^2 r_k^{2+2\alpha-2\gamma}}\int_{\O_k} U_k^2 |\nabla \phi|^2 \,dx
\ee
for some $C$ depending on $\lambda,\Lambda$ and $[w_k]_{C^{0,\gamma}}$. We stress that $[w_k]_{C^{0,\gamma}}$ is uniformly bounded by \cite[Theorem 1.1]{TerTorVit2}. 
On the other hand, if we write the equation of $\overline{W}_k$ in its weak form, for any $\phi \in C^\infty_c(\R^n)$ with $k$ large enough so that
$\text{supp}\phi \subset B_{\tilde{R}}$, we have
\be\label{e:line}
\int_{B_{\tilde{R}}}U^2_k \nabla \phi \cdot \overline{A}_k \nabla \overline{W}_k \,dx = 2\int_{B_{\tilde{R}} } \phi  U_k  (\nabla U_k \cdot \overline{A}_k\nabla W_k(0))\,dx.
\ee
Clearly, by exploiting the compactness of the coefficients in $\mathcal{A}$ we know that there exists a constant $C$ depending on $\lambda,\Lambda$ and the uniform bound on $[w_k]_{C^{0,\gamma}}$, such that
$$
\left|\int_{B_{\tilde{R}} }U^2_k (\overline{A}_k -  \overline{A})\nabla \phi \cdot \nabla \overline{W}_k  \,dx\right| \leq r_k L \frac{C \norm{\eta}{L^\infty}^2}{L_k r_k^{2+2\alpha-2\gamma}} \int_{\O_k} U_k^2 |\nabla \phi|^2 \,dx \to 0^+
$$
where $\overline{A} \in \mathcal{A}$ is the limit of the variable coefficients. Notice that, being $\overline{A}_k \in C^{0,1}$, the limit $\overline{A}\in \mathcal{A}$ has constant coefficients. Let $U$ be the limit of the blow-up sequence $U_k$. Since $Z(U_k)\to Z(U)$ locally with respect to the Hausdorff convergence, we easily get
$$
\left|\int_{B_{\tilde{R}} }(U^2_k -U^2) \overline{A}\nabla \phi \cdot \nabla \overline{W}_k  \,dx\right| \to 0^+, \quad\mbox{for every }\phi \in C^\infty_c(\R^n \setminus Z(U)).
$$
Moreover, in light of \cite[Proposition 3.5]{TerTorVit2}, by applying Fatou's Lemma in \eqref{e:cacio-e-pepe} we deduce that the limit $\overline{W}$ of the sequence $\overline{W}_k$ belongs to $H^1_\loc(\R^n,U^2 )$. Indeed, once we prove that the linear term in \eqref{e:line} vanishes asymptotically as $k$ approaches infinity, we can completely characterize the blow-up limit of $\overline{W}_k$ and $\overline{V}_k$ as a solution to
\be\label{e:equation-lim}
\text{div}(U^2 \overline{A}\nabla \overline{W}) = 0 \quad\mbox{in }\R^n,
\ee
in the sense of Definition \ref{definition.energy.a}.
More precisely, suppose that for every $\phi \in C^\infty_c(\R^n)$ it holds
\be\label{e:terminelineare}
\left|\int_{B_{\tilde{R}} } \phi  U_k  (\nabla U_k \cdot \overline{A}_k\nabla W_k(0))\,dx\right| \to 0^+
\ee
as $k\to +\infty$. Then, summing up all the features we have obtained on the two blow-up sequences in \eqref{e:blowlin}, we have that they both converge to a function $\overline{W} \in H^1_\loc(\R^n,U^2)$ whose gradient is non-trivial and globally $\alpha$-H\"{o}lder continuous. Moreover, $\overline{W}$ satisfies the equation \eqref{e:equation-lim} in the sense of Definition \ref{definition.energy.a}.\\
By classing compactness (e.g. \cite{Lin} or \cite[Section 2.2]{NabVal}) we already know  that, up to a change of coordinates induced by $\overline{A}$, the blow-up limit $U$ is an harmonic polynomial.
Moreover, by \cite[Proposition 3.9]{TerTorVit2} the product $\overline{U} \, \overline{W}$ is an entire harmonic function satisfying $Z(\overline{U})\subseteq Z(\overline{U} \, \overline{W})$. Finally, since
$$
\overline{W}(0) = |\nabla \overline{W}|(0)=0,
$$
by exploiting the global $\alpha$-H\"{o}lder regularity of the gradient of $\overline{W}$, we get
$$|\overline{W}(x)|\leq C\left(1+|x|\right)^{1+\alpha}\quad\mbox{in }\R^n.$$
Hence, by the Liouville theorem \cite[Theorem 1.2]{TerTorVit2}, we know that $\overline{W}$ is a linear function, in contradiction with
the fact that $\overline{W}$ has non constant gradient.\\

In order to conclude the proof, we need to show the asymptotic estimate of the linear term \eqref{e:terminelineare}, for which a fine analysis of solutions in the class $\mathcal{S}_{N_0}$ is needed.\\

Before proceeding, let us recall some notation that will be used in this section. First, given $x_k$ to be the point associated to \eqref{e:contr}, we set the \emph{macroscopic variables} (as in \eqref{e:delta_k}, \eqref{e:z.cap})
$$
\delta_k = \mathrm{dist}(x_k,Z(u_k))=|\xi_k - x_k|,\qquad \mbox{and}\qquad
\hat{x}_k = \begin{cases}
x_k,  &\mbox{if } \lim_{k\to +\infty}\frac{\delta_k}{r_k}= +\infty\\
\xi_k, &\mbox{if } \lim_{k\to +\infty}\frac{\delta_k}{r_k}<+\infty,
\end{cases}
$$
where the last definition must be interpreted asymptotically, for $k$ sufficiently large.\\ On the other hand, in view of the blow-up sequences centered at $\hat{x}_k$ with rescaling factor $r_k$, we also define the \emph{microscopic variables}
\be\label{e:micro.variable}
\mathrm{p}^1_k = \frac{\xi_k - \hat{x}_k}{r_k} = \begin{cases}
0 &\mbox{if }\hat{x}_k=\xi_k,\\
\frac{\xi_k - \hat{x}_k}{r_k} & \mbox{if }\hat{x}_k=x_k,
\end{cases}\qquad d_k = \mathrm{dist}(\mathrm{p}^1_k,Z(U_k))=\begin{cases}
0,  &\mbox{if } \hat{x}_k = \xi_k\\
\frac{\delta_k}{r_k}, &\mbox{if } \hat{x}_k = x_k
\end{cases}
\ee
where $U_k$ is defined by \eqref{e:blow-up-finalproof}. Notice that either $d_k =0$ or it approaches infinity as $k\to +\infty$. Heuristically, the variables in \eqref{e:micro.variable} can be seen as the infinitesimal counterpart of the one in \eqref{e:micro.variable}, normalized with respect to the rescaling factor.

\subsection{The hooking Lemma}\label{s:hooking} In order to estimate the linear term in \eqref{e:terminelineare}, we begin by describing how the variation of the generalized Almgren frequency near regular points detects the presence of singularities and the oscillation of the normal vector to the nodal set. We would like to stress the fact that everything we prove in the following subsection is purely $2$-dimensional.\\

We proceed by stating the result for entire solutions of \eqref{equv} in $\R^2$, then we deduce its localized counterpart in Lemma \ref{l:notanera}.
\begin{Lemma}\label{l:nera1}
Let $A\in \mathcal{A}$ be a constant matrix and $u$ be an entire solution to
$$
L_A u=0 \quad\mbox{in }\R^2, \qquad \norm{u}{L^2(\partial B_1)}=1,\qquad 0 \in R(u)
$$
with Almgren frequency bounded at infinity by some $N_0 \in \R$; that is,
$$
\lim_{r\to +\infty}N(0,u,r)\leq \overline{N}.
$$
Let also $\eps>0$ and $N(0,u,1)\geq 1+\eps$.

Then there exist $\overline{R}>0$ and $\overline{\theta}>0$ depending only on $\eps$ and $\overline{N}$, such that there exists a new regular point $x_1 \in R(u)\cap \partial B_{\overline{R}}$ satisfying
$$
|\mathrm{angle}\left(\nabla u(0),\pm\nabla u(x_1)\right)|\geq \overline{\theta}.
$$
\end{Lemma}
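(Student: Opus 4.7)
The strategy is to reduce the problem to a statement about planar harmonic polynomials of bounded degree via Almgren monotonicity, analyse their nodal-gradient structure using the complex representation, and extract the uniform constants by a compactness argument on the normalised finite-dimensional class.

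\textbf{Reduction and complex representation.} Since $A$ is constant and $A(0)=\mathbb I$, one has $A=\mathbb I$ and $u$ is entire harmonic in $\mathbb R^2$. The monotonicity of the Almgren frequency together with $\lim_{r\to\infty}N(0,u,r)\leq\overline N$ forces polynomial growth of order at most $\overline N$, so by a Liouville-type statement $u$ is a harmonic polynomial of some degree $d\leq\lfloor\overline N\rfloor$. The hypothesis $0\in R(u)$ gives $N(0,u,0^+)=1$; if $d=1$ the frequency would be identically $1$, contradicting $N(0,u,1)\geq 1+\varepsilon$. Hence $d\geq 2$. Writing $u=\mathrm{Re}\,f$ for a holomorphic polynomial $f(z)=c_1z+\dots+c_dz^d$ with $c_1=f'(0)\neq 0$, and identifying $\nabla u(z)$ with $\overline{f'(z)}$, the direction of the gradient at $z$ is encoded by $-\arg f'(z)$.

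\textbf{Asymptotic angular spread.} For $R$ sufficiently large the term $c_dz^d$ dominates $f$ on $\partial B_R$: the set $Z(u)\cap\partial B_R$ consists of $2d$ transverse regular points at angles $\theta_k\approx(\pi/2-\arg c_d+k\pi)/d$, and at those points $\arg f'(z)\approx(d-1)\theta_k+\arg(dc_d)$. As $k$ varies from $0$ to $2d-1$ these values collapse to $d$ distinct directions modulo $\pi$, uniformly spaced at gaps $\pi/d$ on the projective circle. An elementary computation shows that, for any reference direction (in particular $\arg\nabla u(0)=-\arg c_1$), the maximum angular distance to such a set of $d$ uniformly spaced directions is at least $\pi/2-\pi/(2d)\geq\pi/4$ whenever $d\geq 2$. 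This furnishes a regular nodal point $x_1\in R(u)\cap\partial B_R$ with the desired angle, with an error that becomes negligible once $R$ passes the transition scale of $u$.

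\textbf{Uniformity of $\overline R$.} To make $\overline R$ depend only on $\varepsilon$ and $\overline N$, expand $u$ in the polar Fourier basis $u(r,\theta)=\sum_{j=1}^{d}r^j(a_j\cos(j\theta)+b_j\sin(j\theta))$; the identity $N(0,u,1)=\sum j(a_j^2+b_j^2)/\sum(a_j^2+b_j^2)$ combined with the normalisation $\sum(a_j^2+b_j^2)=1/\pi$ yields $\sum_{j\geq 2}(j-1)(a_j^2+b_j^2)\geq \varepsilon/\pi$. Hence at least one intermediate mode of degree $j^*\in\{2,\dots,\lfloor\overline N\rfloor\}$ has $L^2(\partial B_1)$-norm bounded below uniformly by $\sqrt{\varepsilon/(\pi(\overline N-1))}$, and the scale at which this mode overtakes the linear part is bounded by some $C(\varepsilon,\overline N)$. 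Taking $\overline R$ just above this threshold and running the previous step with $P_{j^*}$ in place of the top-degree mode $P_d$ closes the argument. A cleaner variant is a direct compactness-contradiction: if no uniform pair worked, a sequence $u_k$ in the normalised class with $\theta_k\to 0$ would, after rescaling, subconverge to a nontrivial homogeneous harmonic polynomial $Q$ of degree $\geq 2$ on $\partial B_1$ whose regular nodal gradients are all parallel, modulo sign, to a fixed direction; this is impossible since those gradients are tangent to $\partial B_1$ at $2d$ equispaced points and therefore span $d\geq 2$ distinct directions modulo $\pi$.

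\textbf{Main obstacle.} The delicate point is precisely the uniformity of the threshold $\overline R$: when $d$ is close to $\overline N$ the leading coefficient $c_d$ can be arbitrarily small, so the purely asymptotic regime recedes arbitrarily far. The remedy is to replace the top-degree mode by the intermediate mode $P_{j^*}$ whose size is quantitatively controlled by the frequency surplus $\varepsilon$ via the Fourier identity above, or, equivalently, to invoke the compactness-contradiction argument, which bypasses the explicit control of transition scales by passing to a homogeneous harmonic limit on which the angular spread of nodal gradients is rigid as soon as the degree is at least two.
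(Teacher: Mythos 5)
Your ``cleaner variant'' at the end --- the compactness--contradiction on the normalized, finite-dimensional class of harmonic polynomials of degree between $2$ and $\lfloor\overline N\rfloor$, followed by the rigidity of nodal gradients of a (blow-down) homogeneous harmonic polynomial of degree $d\geq 2$, whose $2d$ equispaced nodal rays carry gradients spanning $d$ distinct directions modulo $\pi$ --- is essentially the paper's own proof: the paper extracts a nontrivial limit polynomial $v_\infty$ with $N_\infty\geq 1+\eps$ all of whose regular nodal gradients are parallel to a fixed $\xi$ modulo sign, and contradicts this by looking at the $2N_\infty$ asymptotic nodal branches, which are invariant under rotation by $\pi/N_\infty$. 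Your reduction to degree $d\geq 2$ and the computation of the asymptotic gradient directions via $\arg f'$ are also correct, and the lower bound $\pi/2-\pi/(2d)\geq\pi/4$ on the angular spread is a nice quantitative supplement the paper does not make explicit.

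However, the route you present first and claim ``closes the argument'' --- locating an intermediate Fourier mode $j^*\geq 2$ with $a_{j^*}^2+b_{j^*}^2$ bounded below by the frequency surplus, and then ``running the previous step with $P_{j^*}$ in place of $P_d$'' at the scale where $P_{j^*}$ overtakes the linear part --- has a genuine gap. The previous step rested on the chosen mode dominating \emph{all} other modes on $\partial B_R$, which is what identifies the nodal points and their gradient directions there. An intermediate mode never dominates the modes of higher degree at large radii, and at the transition scale against the linear part the higher coefficients (controlled only from above by the normalization, not relative to $|c_{j^*}|$) may already be comparable or dominant; so neither $Z(u)\cap\partial B_{\overline R}$ nor $\arg f'$ there is described by $P_{j^*}$, and no angular spread can be read off. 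This is precisely why the uniform $\overline R$ must be obtained by compactness rather than by an explicit transition-scale computation; your second variant does this correctly, so the proposal stands, but only through that route.
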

\begin{proof}
Let us fix $\eps>0$ and consider $v(x):=u(A^{1/2}x)$, which is an entire harmonic function with bounded frequency at infinity. Hence, by applying a classical blow-down argument, we already know that $v$ is an harmonic polynomial of degree $N$, with $1+\eps \leq N\leq \overline{N}$.

Now, suppose by contradiction that there exists a sequence of harmonic polynomials $v_k$ of degree $N_k$, with $1+\eps \leq N_k\leq \overline{N}$, such that for every $x \in R(v_k)\cap B_k$, we have $$
|\mathrm{angle}(A^{-1/2}\nabla v_k(0),\pm A^{-1/2}\nabla v_k(x))|\leq \frac{1}{k}.
$$
Since $\norm{v_k}{L^2(\partial B_1)}=1$, by compactness there exists a non-trivial limit harmonic polynomial $v_\infty$ of degree $N_\infty$ (with $1+\eps \leq N_\infty\leq \overline{N}$), and a point $\xi \in \R^2\setminus \{0\}$, such that
$$
0 \in R(v_\infty)\quad\mbox{and}\quad \nabla v_\infty(x)=|\nabla v_\infty(x)|\xi,\,\mbox{ for every }x \in R(v_\infty).
$$
Notice that $\xi \neq 0$ since the matrix $A^{-1}$ is uniformly elliptic.\\
On the other hand, since $N_\infty\geq 1+\eps>1$, we know that regular set $R(v_\infty)$ shadows $2N_\infty$-lines which are invariant with respect to rotations of angle $\pi/N_\infty$. More precisely, there exist a direction $\nu \in \mathbb S^1$ and two sequences $R_k \to +\infty, \, \eta_k^i \in \mathbb S^1$ with $i=0,\dots,2N_\infty-1$, such that
$$
R_k\eta_k^i \in R(v_\infty)\cap \mathbb S^1 \quad\mbox{and}\quad
\lim_{R_k\to +\infty}\norm{\frac{\nabla v_\infty}{|\nabla v_\infty|}( R_k\eta_k^i)-\mathcal{O}_{i\frac{\pi}{N_\infty}}\nu}{L^\infty(B_1)} =0,
$$
where $\mathcal{O}_{\theta}$ is the counterclockwise rotation of angle $\theta \in [0,2\pi)$. Therefore, there exist $2N_\infty$ sequences of points in $x_k^i\in R(v_\infty)$, with $i=1,\dots,2N_\infty$ such that, for $k$ sufficiently large, we have
$$
\mathrm{angle}(\nabla v_\infty(x_k^i),\pm\nabla v_\infty(x_k^j))\geq \frac{10}{11}\frac{\pi}{2N_\infty}\geq  \frac{10}{11}\frac{\pi}{2\overline{N}},
$$
for every $i\neq j$, which leads to a contradiction.
\end{proof}
\begin{Lemma}\label{l:notanera}
Let $\eps>0$. Then there exist $\delta>0$ small and $ \overline{\rho}>1$ sufficiently large such that the following holds true: given $A \in \mathcal{A}$ and $u \in H^1(B_{\overline{\rho}})$ satisfying
$$
L_A u= 0 \quad\mbox{in }B_{\overline{\rho}},\quad 0 \in R(u)\quad\mbox{and}\quad N(0,u,\overline{\rho})\leq \overline{N},
$$
for some $\overline{N}>0$, and assuming that
$$
[A]_{C^{0,1}(B_{\overline{\rho}})}\leq \delta \quad\mbox{and}\quad N(0,u,1)\geq 1+\eps,
$$
then, there exist $\overline{R}\in (1,\overline{\rho})$ and $\overline{\theta}>0$, depending only on $\eps$ and $\overline{N}$, for which it exists a new regular point $\mathrm{q} \in R(u)\cap \partial B_{\overline{R}}(0)$ such that
$$
|\mathrm{angle}\left(\nabla u(0),\pm\nabla u(\mathrm{q})\right)|\geq \overline{\theta}.
$$
\end{Lemma}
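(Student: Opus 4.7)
The strategy is a compactness-and-contradiction argument that reduces Lemma \ref{l:notanera} to the entire-space Lemma \ref{l:nera1}, leveraging the smallness of $[A]_{C^{0,1}}$ to flatten the coefficient matrix to $A(0)=\mathbb I$ on any fixed ball and the largeness of $\overline\rho\gg 1$ to recover an entire solution in the limit.

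Fixing $\eps$ and $\overline N>0$, let $\overline R,\overline\theta$ denote the constants provided by Lemma \ref{l:nera1} for these parameters, and suppose by contradiction that the conclusion of Lemma \ref{l:notanera} fails with the choices $\overline R$ and $\overline\theta/2$. This yields sequences $\delta_k\to 0^+$, $\overline\rho_k\to+\infty$, matrices $A_k\in\mathcal A$ with $[A_k]_{C^{0,1}(B_{\overline\rho_k})}\le\delta_k$, and solutions $u_k$ of $L_{A_k}u_k=0$ in $B_{\overline\rho_k}$ with $0\in R(u_k)$, $N(0,u_k,\overline\rho_k)\le\overline N$ and $N(0,u_k,1)\ge 1+\eps$, yet such that every $q\in R(u_k)\cap\partial B_{\overline R}$ satisfies $|\mathrm{angle}(\nabla u_k(0),\pm\nabla u_k(q))|<\overline\theta/2$. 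Since both the frequency and the angle are scale-invariant in $u_k$, I would renormalize so that $|\nabla u_k(0)|=1$; together with $u_k(0)=0$ this pins the $C^{1,\alpha}$-expansion of $u_k$ at the origin and gives the small-scale asymptotics $H(0,u_k,r)\asymp r^{n+1}$ as $r\to 0^+$.

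The key compactness step then propagates this normalization to all scales: combining the cap $N\le\overline N$ with the doubling-type estimate \eqref{doub} furnishes $H(0,u_k,R)\le C_R$ on each fixed ball $B_R$, while a Caccioppoli estimate and elliptic regularity yield uniform bounds in $H^1(B_R)\cap C^{1,\alpha}(B_R)$. Since $A_k(0)=\mathbb I$ and $[A_k]_{C^{0,1}(B_R)}\le\delta_k\to 0$ force $A_k\to\mathbb I$ uniformly on every compact set, a diagonal extraction yields $u_k\to u_\infty$ in $C^{1,\alpha}_\loc(\R^2)\cap H^1_\loc(\R^2)$ with $u_\infty$ entire harmonic, $u_\infty(0)=0$, and $|\nabla u_\infty(0)|=1$. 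Continuity of the Almgren frequency under $C^{1,\alpha}_\loc$-convergence then gives $N(0,u_\infty,1)\ge 1+\eps$ and $N(0,u_\infty,r)\le\overline N$ for every $r>0$; by Proposition \ref{lem.natural}, $u_\infty$ is a harmonic polynomial of degree in $[2,\overline N]$ with $0\in R(u_\infty)$, hence fulfills every hypothesis of Lemma \ref{l:nera1}.

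Applying Lemma \ref{l:nera1} to $u_\infty$ produces a regular point $x_1\in R(u_\infty)\cap\partial B_{\overline R}$ with $|\mathrm{angle}(\nabla u_\infty(0),\pm\nabla u_\infty(x_1))|\ge\overline\theta$. At such an asymptotic nodal point of a harmonic polynomial the nodal-curve tangent is perpendicular to the radial direction, so $Z(u_\infty)$ meets $\partial B_{\overline R}$ transversally at $x_1$; the $C^{1,\alpha}_\loc$-convergence and the implicit function theorem then produce $q_k\in Z(u_k)\cap\partial B_{\overline R}$ with $q_k\to x_1$ and $\nabla u_k(q_k)\to\nabla u_\infty(x_1)\ne 0$, so $q_k\in R(u_k)$ and $|\mathrm{angle}(\nabla u_k(0),\pm\nabla u_k(q_k))|\ge\overline\theta/2$ for $k$ large, contradicting the negated statement. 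The principal obstacle is the compactness step: obtaining uniform $H^1_\loc(\R^2)$ estimates on $u_k$ from the mixed information ``$|\nabla u_k(0)|=1$ at the origin plus bounded frequency on the growing domain $B_{\overline\rho_k}$''. This requires carefully coupling the $C^{1,\alpha}$ expansion at $0$ with the doubling estimate \eqref{doub} to bridge small and large scales, a bridge that is delicate because the vanishing order of $u_k$ at $0$ is exactly $1$ while the overall frequency exceeds $1+\eps$.
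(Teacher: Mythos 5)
Your overall strategy (contradiction plus compactness, reducing to the entire-space Lemma \ref{l:nera1}) is the same as the paper's, but the normalization you choose creates a genuine gap that cannot be bridged. Setting $|\nabla u_k(0)|=1$ does \emph{not} yield uniform $H^1_\loc\cap C^{1,\alpha}_\loc$ bounds. The frequency cap $N\le\overline N$ together with \eqref{doub} controls ratios of $H(0,u_k,\cdot)$ at different scales, but since the vanishing order at $0$ is exactly $1$ while $N(0,u_k,1)\geq 1+\eps$, the doubling estimate run from $r\to 0^+$ up to $r=1$ only gives a \emph{lower} bound $H(0,u_k,1)\gtrsim|\nabla u_k(0)|^2$, never an upper bound. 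Concretely, take $u_k(z)=\eps_k\,\mathrm{Re}(z)+\mathrm{Re}(z^2)$ with $\eps_k\to 0$ (harmonic, $A_k=\mathbb I$, $0\in R(u_k)$, $N(0,u_k,1)\to 2$, frequency bounded by $2$ at all scales): after your normalization one gets $\mathrm{Re}(z)+\eps_k^{-1}\mathrm{Re}(z^2)$, whose $L^2(\partial B_1)$-norm blows up. So the "delicate bridge" you flag at the end is not merely delicate — it does not exist. The paper instead normalizes by $\|u_k\|_{L^2(\partial B_1)}=1$, which gives compactness immediately via the doubling estimate and elliptic regularity.

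With the correct normalization a second problem appears in your "transfer back" step: $|\nabla u_k(0)|$ may tend to $0$ (in the example above the origin becomes a singular point of the limit), so $0\in S(u_\infty)$ is possible, Lemma \ref{l:nera1} cannot be applied to $u_\infty$ as stated (it requires $0\in R(u_\infty)$), and the directions $\nabla u_k(0)/|\nabla u_k(0)|$ need not converge to anything attached to $u_\infty$ at the origin. The paper circumvents both issues by not applying Lemma \ref{l:nera1} as a black box: it extracts only the limit unit vector $\xi=\lim\nabla u_k(0)/|\nabla u_k(0)|$, passes the negated hypothesis to the limit to obtain the rigidity $\nabla u_\infty(x)=|\nabla u_\infty(x)|\xi$ on all of $R(u_\infty)$, and then rederives the contradiction from the asymptotic $2N_\infty$-fold nodal structure of the polynomial (i.e., it reruns the \emph{proof} of Lemma \ref{l:nera1} rather than invoking its statement). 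Two smaller points: your claim that at $x_1$ the nodal tangent is "perpendicular to the radial direction" is backwards (for a homogeneous polynomial the nodal rays are radial, hence transversal to the sphere), and since $u_\infty$ need not be homogeneous, transversality of $Z(u_\infty)$ with the fixed sphere $\partial B_{\overline R}$ is not automatic; locating the approximating points $q_k$ exactly on $\partial B_{\overline R}$ would require an additional perturbation of the radius.
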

\begin{proof}
By contradiction, suppose there exist $A_k \in \mathcal{A}, u_k \in H^1(B_k)$
satisfying
$$
L_{A_k} u_k= 0 \quad\mbox{in }B_{k},\qquad 0 \in R(u_k),\qquad N(0,u_k,k)\leq \overline{N},
$$
and such that
$$
[A_k]_{C^{0,1}(B_{k})}\leq \frac{1}{k} \quad\mbox{and}\quad N(0,u_k,1)\geq 1+\eps
$$
but the thesis of Lemma \ref{l:notanera} does not hold true. Namely, we have
$$
|\mathrm{angle}\left(\nabla u_k(0),\pm\nabla u_k(\mathrm{q})\right)|\leq \frac{1}{k},\quad \mbox{for every }\mathrm{q} \in Z(u_k)\cap B_{\overline{\rho}}.
$$
Fix $R>1$, then, up to normalizing the sequence $u_k$ with respect to the $L^2$-norm on $\partial B_1$, there exists $\overline{k}>0$ such that, for every $k>\overline{k}$,
$$
[A_k]_{C^{0,1}(\overline{B_R})}\leq \frac{1}{k},\quad L_{A_k} u_k=0\quad\mbox{in }B_R,\quad \norm{u_k}{L^2(\partial B_1)}=1.
$$
Moreover, $N(0,u_k,R)\leq N_0, N(0,u_k,1)\geq 1+\eps$ and
\be\label{e:ab}
|\mathrm{angle}\left(\nabla u_k(0),\pm\nabla  u_k(\mathrm{q})\right)|\leq \frac{1}{k},\quad \mbox{for every }\mathrm{q} \in Z(u_k)\cap B_{R}.
\ee
Therefore, by a classical diagonal argument, up to a subsequence it holds that:
\begin{enumerate}
  \item[\rm{(i)}] $A_k \to A \in \mathcal{A}$, for some constant matrix $A$;
  \item[\rm{(ii)}] $u_k \to u$ strongly in $C^{1,\alpha}_\loc(\R^2)\cap H^1(\R^2)$, where $u$ is a non-trivial entire solution to
      $$
      L_A u=0 \quad \mbox{in }\R^2\qquad\mbox{such that}\quad\norm{u}{L^\infty(B_1)}=1;
      $$
  \item[\rm{(iii)}] by the strong convergence and the monotonicity of the Almgren frequency, it holds
\be\label{e:invew}
\lim_{R\to +\infty}N(0,u,R)\leq \overline{N}\qquad\mbox{and}\qquad N(0,u,1)\geq 1+\eps.
\ee
\end{enumerate}
On the other hand, by the strong convergence, the absurd hypothesis \eqref{e:ab} implies that there exists $\xi \in \R^2\setminus \{0\}$ such that
\be\label{e:invew2}
\nabla u(x)= |\nabla u(x)|\xi, \qquad\mbox{for every }x \in R(u).
\ee
In view of \eqref{e:invew} and \eqref{e:invew2}, the contradiction follows by repeating the same proof of Lemma \ref{l:nera1} for entire solutions in $\R^2$ with constant leading coefficients.
\end{proof}

Let us proceed with the definition of the critical radius associated to the blow-up sequence \eqref{e:blow-up-finalproof}. As we will see, the validity of the hooking Lemma for the blow-up sequence \eqref{e:blow-up-finalproof} strictly relies on a careful asymptotic analysis of this peculiar radius.
      \begin{Definition}[The critical radius]\label{d:radius}
Let $\eps>0$ to be chosen later and $\overline{k}>0$ be the index of Lemma \ref{l:involuto}. Then, for every $k\geq \overline{k}$ we set
$$
\begin{aligned}
       R_k := R_k^\eps &= \mathrm{inf}\left\{r\in \left(0,\frac{1}{8 r_k}\right) \colon N(\xi_k,u_k,r_k r)\geq 1+\eps\right\}\\
       &= \mathrm{inf}\left\{r\in \left(0,\frac{1}{8 r_k}\right) \colon N(\mathrm{p}^1_k,U_k,r)\geq 1+\eps\right\}.
       \end{aligned}
       $$
        In view of Lemma \ref{l:involuto}, we already know that the Almgren frequency is well defined since $B_{R_k r_k}(\xi_k)\subset B_1$, for every $k \geq \overline{k}$.
       Clearly, for a fixed $k \geq \overline{k}$ the critical radius associated to $\eps>0$ does exist if and only if $1+\eps \leq N(\xi_k,u_k,1/8)$, which corresponds to the case in which the sequence $u_k$ does detect the presence of singular points at macroscopic scales.
       \end{Definition}
       \begin{remark}
       In \cite[Section 2.3]{NabVal} the authors introduced a similar quantity in order to construct a specific covering of the tubular neighborhood of the singular set in terms of critical balls (see \cite[Theorem 1.1]{NabVal}). Such similarity suggests a deep interplay between the two problems, where in our setting the drop of the Almgren frequency detects the oscillation of the gradient of $W_k$ close to $Z(U_k)$, due to the possible persistence of singularities at small scales.
        \end{remark}

At this point, we know that three scenarios can occur:
        \be\label{case0}
        \text{\rm{(i)} \quad either $\displaystyle{\lim_{k\to +\infty}N(\xi_k,u_k,1/8)= 1},\qquad \left(\text{i.e. } \lim_{k\to +\infty}N\left(\mathrm{p}^1_k,U_k,\frac{1}{8 r_k}\right)= 1\right)$}
        \ee
        or there exist $\eps \in (0,1)$ and $\overline{k}>0$ (possibly larger than the one in Definition \ref{d:radius}) such that it exists $R_k:=R_k^\eps \in (0,1/(8r_k))$ for which
        $$
        N(\mathrm{p}^1_k,U_k,R_k)=1+\eps,\quad\mbox{ for every }k\geq \overline{k}.
        $$
        In this case we have the two remaining possibilities:
        \begin{enumerate}
          \item[\rm{(ii)}] either there exists $M>0$ such that
          $$
          r_k R_k \geq M, \quad\mbox{for }k\geq \overline{k},
          $$
          which implies that
          \be\label{case1}
          N(\mathrm{p}^1_k,U_k,R)\leq 1+\eps,\quad\mbox{for every }R< \mathrm{min}\left\{\frac{M}{r_k},\frac{1}{8 r_k}\right\};
          \ee
          \item[\rm{(iii)}] or $r_k R_k \to 0^+$.
        \end{enumerate}
Those cases admit a more geometric interpretation in terms of the hooking Lemma of Section \ref{s:hooking}. More precisely:
\begin{enumerate}
\item[\rm{(i)}] either, from a microscopic point of view, there are no hooking points $\mathrm{p}^2_k$ resulting from an increment of any size $\eps \in (0,1)$ of the Almgren frequency centered at $\mathrm{p}^1_k$;
\end{enumerate}
or such a point exists, and this admits two behaviors:
\begin{enumerate}
\item[\rm{(ii)}] asymptotically, the two points $\mathrm{p}^1_k, \mathrm{p}^2_k$ \emph{stay apart} while maintaining a quantitative effect on the Almgren frequency of size $\eps \in (0,1)$;
\item[\rm{(iii)}] asymptotically, the two points \emph{collapse} more rapidly than the scaling factor, and therefore the presence of singularities is perceived both in terms of the frequency formula and the hooking point.
        \end{enumerate}
We stress that, in light of the monotonicity of the Almgren frequency formula, it is not restrictive to assume that the energy increase is of size $\eps \in (0,1-\alpha)$.\\
        We postpone the cases associated to \eqref{case0} and \eqref{case1} to Section \ref{s:almost-flat} and we address the latter one in which $R_k$ exists for every $k$ sufficiently large and satisfies $r_k R_k \to 0^+$.\\

        First of all, we observe that in this case, the normalized blow-up sequence \eqref{e:blow-up-finalproof} falls within the hypothesis of Lemma \ref{l:notanera}. Given $\eps>0$, let $\overline{\rho}>0,\delta >0$ be the constants in Lemma \ref{l:notanera}. By definition we already know that
$$
L_{\overline{A}_k} U_k = 0 \quad\mbox{in }\O_k \supseteq B_{\frac{1}{8r_k}},\quad
0 \in R(U_k),\quad \mathrm{p}^1_k \in R(U_k)\cap \partial B_{d_k}
$$
with
$$
[\overline{A}_k]_{C^{0,1}\left(B_{\frac{1}{8r_k}}(\mathrm{p}^1_k)\right)} = r_k [A_k]_{C^{0,1}\left(B_{\frac{1}{8}}(\xi_k)\right)} \leq L r_k.
$$
On the other hand, by classical results on the Almgren frequency (see also \cite[Theorem 3.2.10]{HanLin2}), there exists $C=C(\lambda,\Lambda)$ such that
$$
C N_0 \geq  N\left(\xi_k, u_k, \frac18\right) = N\left(\mathrm{p}^1_k,U_k,\frac{1}{8r_k}\right),
$$
for $k\geq \overline{k}$, where $\overline{k}>0$ is the index of Lemma \ref{l:involuto}. Indeed, such bounds follows by showing that for $\xi_k \in B_{3/4}$, for $k\geq \overline{k}$. On the other hand, since the critical radius $R_k=R_k^\eps$ exists and satisfies
$$
N(\mathrm{p}^1_k,U_k,R_k)\geq 1+\eps,\qquad \mbox{with } R_k r_k\to 0^+,
$$
up to consider $k$ sufficiently large so that
$$
\frac{1}{8R_k r_k}\geq \overline{\rho},\qquad L R_k r_k\leq \delta,
$$
the sequence $U_k$ falls under Lemma \ref{l:notanera}'s assumptions. More precisely, by applying Lemma \ref{l:notanera} to
$$
\overline{U}_k(x) = \frac{U_k(\mathrm{p}^1_k + R_k x)}{\norm{U_k}{L^2(\partial B_{R_k}(\mathrm{p}^1_k))}}
 $$
 and rephrasing the results in terms of $U_k$, we deduce that for $k\geq \overline{k}$ there exists $\mathrm{p}^2_k \in R(U_k)\cap \partial B_{R_k\overline{R}}(\mathrm{p}^1_k)$ such that
$$
 \mathrm{angle}\left(\nu_1,\pm\nu_2\right)\geq \overline{\theta},\qquad\mbox{where }\nu_i = \frac{\nabla U_k(\mathrm{p}^i_k)}{\norm{\nabla U_k(\mathrm{p}^i_k)}{}}
 $$
 and $\overline{R},\overline{\theta}>0$ depend only on $\eps$ and the class $\mathcal{S}_{N_0}$ (see Figure \ref{figure1}). Through the proof, we omit the dependence of $\nu_i$ on the index $k\geq \overline{k}$.
        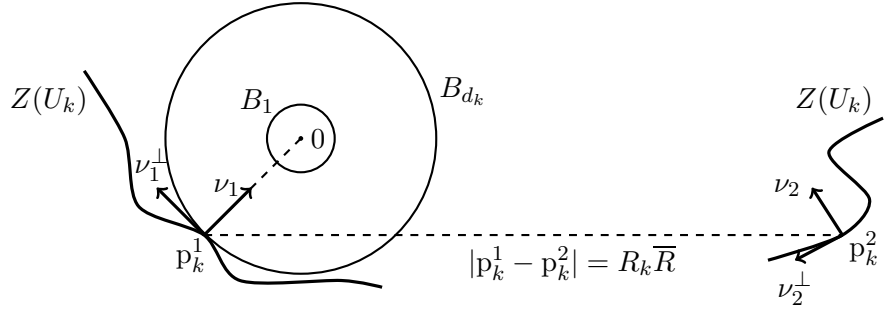
\begin{figure}[ht]
\begin{tikzpicture}[scale =0.9]
      \draw[thick,black,dashed] (0,0) -- (-1.428,-1.428);
      \draw[thick] (2,0) arc (0:360:2cm and 2cm);
      \draw[thick] (0.5,0) arc (0:360:0.5cm and 0.5cm);
      \fill circle[radius=1pt] (0,0) node [anchor=west] {$0$};
      \draw (-1.628,-1.728) node {$\mathrm{p}^1_k$};
      \draw [very thick,->] (-1.428,-1.428) -- (-0.728,-0.728) node [anchor=east] {$\nu_1$};
      \draw [very thick,->] (-1.428,-1.428) -- (-2.128,-0.728);
       \draw (-2.2,-0.78) node [anchor=south] {$\nu_1^\perp$};
      \draw (1.9,0.7) node [anchor=west] {$B_{d_k}$};
      \draw (-0.24,0.55) node [anchor=east] {$B_1$};
      \draw [black,very thick] plot [smooth] coordinates {(-3.2,1) (-2.6,0) (-2.4,-1) (-1.43,-1.428) (-0.9,-2.1) (0.6,-2.1) (1.2,-2.2)};
      \draw[thick,black,dashed] (-1.428,-1.428)--(8,-1.428);
      \draw (4,-1.428) node [anchor=north] {$|\mathrm{p}^1_k - \mathrm{p}^2_k| = R_k \overline{R}$};
      \draw [black,very thick] plot [smooth] coordinates { (6.9,-1.8) (8,-1.428) (8.4,-0.9) (7.8,-0.2) (8.6,0.3) };
      \draw (7.95,-1.62) node [anchor=west] {$\mathrm{p}^2_k$};
            \draw [very thick,->] (8,-1.428) -- (7.55,-0.728) node [anchor=east] {$\nu_2$};
            \draw [very thick,->] (8,-1.428) -- (7.3,-1.8) node [anchor=north] {$\nu_2^{\perp}$};
            \draw (8.6,0.2) node [anchor=south east] {$Z(U_k)$};
             \draw (-3,0.2) node [anchor=south east] {$Z(U_k)$};
           \end{tikzpicture} \caption{This picture describes the scenario in which the critical radius exists for a given threshold and the branches of $R(U_k)$ are connected in the sense of Lemma \ref{l:notanera}.}
\label{figure1}
       \end{figure}

Since the angle between $\nu_1$ and $\nu_2$ is not zero, it follows that they are linearly independent and, by orthogonality, also $\nu^\perp_1$ and $\nu^\perp_2$ as well.
Therefore, since $\mathrm{p}^1_k, \mathrm{p}^2_k \in R(U_k)$, by \cite[Theorem 1.3]{TerTorVit1} we have
\be\label{e:orto}
\left(\overline{A}_k \nabla W_k \right)(\mathrm{p}^i_k) \cdot \nu_i = 0,\quad\mbox{for }i=1,2,
\ee
that is $\left(\overline{A}_k \nabla W_k \right)(\mathrm{p}^i_k) \in \langle \nu_i^\perp\rangle$. On the other hand, by exploiting that $\R^2$ is spanned by $(\nu_1,\nu_2)$ and also by $ (\nu_1^\perp,\nu_2^\perp)$, we get
$$
\pi_{\nu_2^\perp}\left(\left(\overline{A}_k \nabla W_k \right)(\mathrm{p}^1_k)\right)
=0=\pi_{\nu_1^\perp}\left(\left(\overline{A}_k \nabla W_k \right)(\mathrm{p}^2_k)\right)$$
where $\pi_{V}\colon \R^2 \to \R$ is the projection along one generator $V\in \mathbb S^1$ of $\R^2$. Finally, we can show how the hooking Lemma allows us to estimate the norm of $(\overline{A}_k\nabla W_k)(0)$ in the linear term in \eqref{e:terminelineare}. First, by \eqref{e:grad} we have
$$
\left(\overline{A}_k \nabla W_k \right)(\mathrm{p}^i_k) \cdot \nu_i = 0 \quad\longleftrightarrow \quad (A_k\nabla w_k)( \hat{x}_k+ r_k \mathrm{p}^i_k)\cdot \nu_i =0.
$$
Then, since $(\overline{A}_k\nabla W_k)(0)=(\overline{A}_k\nabla V_k)(0)$ and
\be\label{e:inter}
\begin{aligned}
(\overline{A}_k\nabla W_k)(x) =&\, (\overline{A}_k\nabla V_k)(x) -\frac{1}{L_k r_k^\alpha}(w_k( \hat{x}_k+r_k x)-w_k( \hat{x}_k))(A_k\nabla \eta)( \hat{x}_k+r_k x)\\ &\,- \frac{1}{L_k r_k^\alpha}(\eta( \hat{x}_k +r_k x)-\eta( \hat{x}_k))(A_k\nabla w_k)( \hat{x}_k + r_k x),
\end{aligned}
\ee
we get, for some constant $C>0$ depending on the class $\mathcal{S}_{N_0}$, that
\be\label{e:da.W.a.V}
\begin{aligned}
|(\overline{A}_k\nabla W_k)(0)| =&\, |(\overline{A}_k\nabla V_k)(0)| \leq |\pi_{\nu_1^\perp}((\overline{A}_k\nabla V_k)(0))|+
|\pi_{\nu_2^\perp}((\overline{A}_k\nabla V_k)(0))|\\
\leq&\,  |\pi_{\nu_1^\perp}((\overline{A}_k\nabla V_k)(0)-(\overline{A}_k\nabla V_k)(\mathrm{p}^2_k))|+|\pi_{\nu_1^\perp}((\overline{A}_k\nabla V_k)(\mathrm{p}^2_k))|\\
& +|\pi_{\nu_2^\perp}((\overline{A}_k\nabla V_k)(0)-(\overline{A}_k\nabla V_k)(\mathrm{p}^1_k))|+|\pi_{\nu_2^\perp}((\overline{A}_k\nabla V_k)(\mathrm{p}^1_k))|\\
\leq &\, 2|\mathrm{p}^1_k|^\alpha + 2|\mathrm{p}^2_k|^\alpha + \frac{\Lambda}{L_k r_k^\alpha}|w_k( \hat{x}_k+r_k \mathrm{p}^1_k)-w_k( \hat{x}_k)||\nabla \eta( \hat{x}_k+r_k\mathrm{p}^1_k)|\\
& +\frac{\Lambda}{L_k r_k^\alpha}|w_k( \hat{x}_k+r_k \mathrm{p}^2_k)-w_k( \hat{x}_k)||\nabla \eta( \hat{x}_k+r_k\mathrm{p}^2_k)|\\
\leq &\, C(|\mathrm{p}^1_k|^\alpha + |\mathrm{p}^2_k|^\alpha) + \frac{1}{L_k r_k^{\alpha}}r_k^\gamma\left(|\mathrm{p}^1_k|^\gamma + |\mathrm{p}^2_k|^\gamma \right) [w_k]_{C^{0,\gamma}}\norm{\nabla \eta}{L^\infty},
\end{aligned}
\ee
where in the second inequality we used that $\nabla V_k$ are globally $\alpha$-H\"{o}lder continuous (see \eqref{e:mancante}) and in the last one that $w_k$ is uniformly $C^{0,\gamma}$ for every $\gamma \in (0,1)$. Finally, by choosing $\gamma = \alpha$ and using the definitions of $\mathrm{p}^1_k$ and $\mathrm{p}^2_k$, we infer that
\be\label{e:stimagrad}
|\nabla W_k(0)|\leq C\left(d_k+R_k\overline{R}\right)^\alpha,
\ee
for $k$ sufficiently large and with $C>0$ depending only on $\mathcal{S}_{N_0}$.\\
Moreover, in view of \eqref{e:orto} and \eqref{e:inter}, we can also control the behaviour of $\nabla V_k$ at $\mathrm{p}^1_k$. More precisely
\be \label{e:t}
\begin{aligned}
|\left(\overline{A}_k \nabla V_k \right)(\mathrm{p}^1_k)\cdot \nu_i| &\leq  \frac{1}{L_k r_k^\alpha}|w_k( \hat{x}_k+r_k \mathrm{p}^1_k)-w_k( \hat{x}_k)||(A_k\nabla \eta)( \hat{x}_k+r_k \mathrm{p}^1_k)|\\
&\leq \frac{|d_k|^\alpha }{L_k} [w_k]_{C^{0,\alpha}}\norm{\nabla \eta}{L^\infty}.
\end{aligned}
\ee

\subsection{The interplay between the hooking points \texorpdfstring{$\mathrm{p}^i_k$}{Lg}}\label{s:inter} To conclude the analysis of the cases associated with $R_k r_k \to 0^+$, we need to understand how the interplay between $d_k$ and $R_k$ allows to estimate the linear term \eqref{e:terminelineare}.\\

Case $d_k =0, R_k \leq C$ uniformly. By \eqref{e:stimagrad}, since $\overline{A}_k \in \mathcal{A}$ we deduce that $|\nabla W_k(0)|$ was \emph{a priori} uniformly bounded with respect to $k$, and so it was not necessary to introduce the normalized sequences $\overline{V}_k,\overline{W}_k$, but one can play directly with $V_k,W_k$.\\

Case $d_k=0, R_k \to +\infty$, (i.e. $\mathrm{p}^1_k=0$). Let $B_{\tilde{R}}$ be the ball containing the support of the test function $\phi$ (see \eqref{e:line}) and $\rho>0$ be such that $B_\rho \subset \O_k$, for $k$ sufficiently large.\\
If $\rho> 2\tilde{R}$, by applying a scaling invariant version of Schauder estimate, we get
$$
\begin{aligned}
\frac{1}{(2\tilde{R})^\beta}\sup_{x,y \in B_{\tilde{R}}}|\nabla U_k(x)-\nabla U_k(y)| &\leq \sup_{x,y \in B_{\tilde{R}}}\frac{|\nabla U_k(x)-\nabla U_k(y)|}{|x-y|^\beta}\\ &\leq \sup_{x,y \in B_{\rho/2}}\frac{|\nabla U_k(x)-\nabla U_k(y)|}{|x-y|^\beta} \leq \frac{C}{\rho^{1+\beta}}\sup_{B_\rho} |U_k|
\end{aligned}
$$
for any $\beta \in (0,1)$, and for some $C$ depending only on $\lambda,\Lambda,L$ and $\beta$.\\On the other hand, by combining the local boundedness of solutions via Moser's iteration and the Almgren monotonicity, we have
$$
\sup_{B_\rho} |U_k|^2 \leq
C\intn_{B_{\frac32\rho}}U_k^2\,dx \leq C H(0,U_k, 2 \rho),
$$
for some constant $C>0$ depending only on $\lambda$ and $\Lambda$.
Therefore, choosing $\rho=R_k/2$ in the inequalities above (which is admissible since $R_k/2\in (\max\{1/2,2\tilde{R}\},1/(8r_k))$ for $k$ sufficiently large), we get
$$
\frac{1}{(2\tilde{R})^\beta}\sup_{x,y \in B_{\tilde{R}}}|\nabla U_k(x)-\nabla U_k(y)|\leq \frac{C}{R_k^{1+\beta}} H(0,U_k, R_k)^{1/2}\leq
\frac{C}{R_k^{1+\beta}} H(0,U_k, 1)^{1/2} R_k^{1+\eps} = \frac{C}{R_k^{\beta-\eps}}.
$$
In the second inequality we used the doubling condition \eqref{doub} and $N(\mathrm{p}^1_k,U_k,R_k)=1+\eps$.\\
We stress that the same result can be obtained by applying the doubling condition \eqref{tipo.doubling} for the $L^2$-norm on balls. Then, by recalling \eqref{e:stimagrad}, for every $x \in B_{\tilde{R}}$ we get
$$
\begin{aligned}
|\overline{A}_k(x) \nabla W_k(0)\cdot \nabla U_k(x)|\leq &\,\,
|\overline{A}_k(x) \nabla W_k(0)||\nabla U_k(x)-\nabla U_k(0)|\\&\,+ |\overline{A}_k(x) - \overline{A}_k(0)| |\nabla W_k(0)||\nabla U_k(0)| + |(\overline{A}_k \nabla W_k)(0)\cdot \nu_i||\nabla U_k(0)|\\
\leq &\,\,\Lambda|\nabla W_k(0)||\nabla U_k(x)-\nabla U_k(0)|+ [\overline{A}_k]_{C^{0,1}(B_{\tilde{R}})} |\nabla W_k(0)||\nabla U_k(0)| \tilde{R}\\
\leq &\,\, C (R_k \overline{R})^\alpha \left( |\nabla U_k(x)-\nabla U_k(0)|+ |\nabla U_k(0)| L r_k \tilde{R}\right)\\
\leq &\,\, \frac{C}{R_k^{\beta-\eps-\alpha}} \overline{R}^\alpha + C |\nabla U_k(0)| \tilde{R} \frac{r_k R_k}{R_k^{1-\alpha}} \\
\end{aligned}
$$
where in the second inequality we used \eqref{e:orto} and in the third one \eqref{e:stimagrad}. Moreover, since $H(0,U_k,1)=1$, notice that $|\nabla U_k(0)|$ is uniformly bounded.\\

Now, since $\eps \in (0,1-\alpha)$, we can choose $\beta \in (0,1)$ so that $\beta >\eps + \alpha$. Thus, by collecting the previous estimates and since $r_kR_k \to 0^+$, it follows
$$
\left|\int_{B_{\tilde{R}} } \phi  U_k  (\overline{A}_k\nabla W_k(0)\cdot \nabla U_k)\,dx\right| \leq C \left(\frac{\overline{R}^\alpha}{R_k^{\beta-\eps-\alpha}} + |\nabla U_k(0)| \tilde{R} \frac{r_k R_k}{R_k^{1-\alpha}}\right) \int_{B_{\tilde{R}} } |\phi|  |U_k|\,dx \to 0^+,
$$
for $k\to +\infty$.\\

Case $d_k\to +\infty, R_k \to +\infty$, (i.e. $\mathrm{p}^1_k\neq 0$). First, given $x \in B_{d_k}$ and $d(x)=\mathrm{dist}(x,Z(U_k))$, by applying a scaling invariant gradient estimate we get
$$
|\nabla U_k(x)| \leq \sup_{B_{\frac{d(x)}{4}}(x)}|\nabla U_k| \leq \frac{C}{d(x)}\sup_{B_{\frac{d(x)}{2}}(x)}|U_k| \leq \frac{C}{d(x)} |U_k(x)|,\qquad\mbox{for }x \in B_{d_k}.
$$
Clearly, we can improve the previous estimate by restricting to $B_{d_k/2}$, indeed
\be\label{e:gradient.est}
|\nabla U_k(x)| \leq \frac{C}{d(x)} |U_k(x)|\leq  \frac{C}{d_k} |U_k(x)|,\qquad\mbox{for }x \in B_{d_k/2}.
\ee
Now, since both $d_k$ and $R_k$ are blowing-up, we need do proceed by taking care of the asymptotic behaviour of their ratio.\\

Suppose that there exists $M>0$ such that $R_k/d_k \leq M$ is uniformly bounded with respect to $k$. Then, if $B_{\tilde{R}}$ is a ball containing the support of the test function, by \eqref{e:stimagrad} and \eqref{e:gradient.est} we get
$$
\begin{aligned}
|\overline{A}_k(x) \nabla W_k(0)\cdot \nabla U_k(x)|&\leq C\left(d_k+R_k\overline{R}\right)^\alpha|\nabla U_k(x)|\\
&\leq C\left(1+\overline{R}\frac{R_k}{d_k}\right)^\alpha \frac{1}{d_k^{1-\alpha}}|U_k(x)|
\end{aligned}
$$
for every $x \in B_{\tilde{R}}$, where $C$ depends only on the class $\mathcal{S}_{N_0}$. Therefore, since $\alpha \in (0,1)$, we get
$$
\left|\int_{B_{\tilde{R}} } \phi  U_k  (\overline{A}_k\nabla W_k(0)\cdot \nabla U_k)\,dx\right| \leq \frac{C}{d_k^{1-\alpha}}\left(1+\overline{R}M\right)^\alpha \int_{B_{\tilde{R}} } |\phi|  U_k^2\,dx \to 0^+.\vspace{0.3cm}
$$

Finally, let us consider the case $R_k/d_k\to +\infty$. Let $\rho>0$ be such that
$\rho> 2d_k$ and $B_\rho(\mathrm{p}^1_k) \subset \O_k$, for $k$ sufficiently large (i.e. $k$ sufficiently large such that $r_k\leq 1/(8\rho)$). Then, by applying a scaling invariant version of the Schauder estimate, we get
\be\label{e:scalingSchauder2}
\begin{aligned}
\frac{1}{(2d_k)^\beta}\sup_{x,y \in B_{d_k}(\mathrm{p}^1_k)}|\nabla U_k(x)-\nabla U_k(y)| &\leq \sup_{x,y \in B_{d_k}(\mathrm{p}^1_k)}\frac{|\nabla U_k(x)-\nabla U_k(y)|}{|x-y|^\beta}\\ &\leq \sup_{x,y \in B_{\rho/2}(\mathrm{p}^1_k)}\frac{|\nabla U_k(x)-\nabla U_k(y)|}{|x-y|^\beta} \leq \frac{C}{\rho^{1+\beta}}\sup_{B_\rho(\mathrm{p}^1_k)} |U_k|.
\end{aligned}
\ee
Since for $k$ sufficiently large we have $R_k\geq 4d_k$, we are allowed to consider $\rho\geq R_k/2$ as admissible radii. On the other hand, in this latter case the interplay between $\mathrm{p}^1_k$ and $\mathrm{p}^2_k$ plays a fundamental role. Indeed, proceeding as in \eqref{e:da.W.a.V}, we get
\be\label{e:gr}
\begin{aligned}
|(\overline{A}_k\nabla V_k)(\mathrm{p}^1_k)| \leq &\,\, |\pi_{\nu_1}((\overline{A}_k\nabla V_k)(\mathrm{p}^1_k))|+
|\pi_{\nu_2}((\overline{A}_k\nabla V_k)(\mathrm{p}^1_k))|\\
\leq &\,\, |\pi_{\nu_1}((\overline{A}_k\nabla V_k)(\mathrm{p}^1_k))|+
|\pi_{\nu_2}((\overline{A}_k\nabla V_k)(\mathrm{p}^1_k)-(\overline{A}_k\nabla V_k)(\mathrm{p}^2_k))|+|\pi_{\nu_2}((\overline{A}_k\nabla V_k)(\mathrm{p}^2_k))|\\
\leq &\,\, 2|R_k\overline{R}|^\alpha + \frac{\Lambda}{L_k r_k^\alpha}|w_k( \hat{x}_k+r_k \mathrm{p}^1_k)-w_k( \hat{x}_k)||\nabla \eta( \hat{x}_k+r_k\mathrm{p}^1_k)|\\
& +\frac{\Lambda}{L_k r_k^\alpha}|w_k( \hat{x}_k+r_k \mathrm{p}^2_k)-w_k( \hat{x}_k)||\nabla \eta( \hat{x}_k+r_k\mathrm{p}^2_k)|\\
\leq &\,\, C|R_k\overline{R}|^\alpha  + \frac{1}{L_k }\left(d_k + R_k\overline{R}\right)^\alpha [w_k]_{C^{0,\alpha}}\norm{\nabla \eta}{L^\infty}\\
\leq &\,\,C(R_k \overline{R})^\alpha,
\end{aligned}
\ee
for $k$ sufficiently large and $C>0$ depending only on $\mathcal{S}_{N_0}$. Finally, by collecting all the previous estimate, we can proceed to control the linear term: for every $x \in B_{\tilde{R}}$
$$
\begin{aligned}
|\overline{A}_k(x) \nabla U_k(x)\cdot \nabla W_k(0)| =&\,\,
|\overline{A}_k(x) \nabla U_k(x)\cdot \nabla V_k(0)|\\
\leq &\,\,
|\overline{A}_k(x) \nabla U_k(x)||\nabla V_k(0)-\nabla V_k(\mathrm{p}^1_k)| + |\overline{A}_k(\mathrm{p}^1_k) \nabla U_k(\mathrm{p}^1_k)\cdot \nabla V_k(\mathrm{p}^1_k)|\\&\,+ |\overline{A}_k(x) \nabla U_k(x) - \overline{A}_k(\mathrm{p}^1_k) \nabla U_k(\mathrm{p}^1_k)||\nabla V_k(\mathrm{p}^1_k)|\\
\leq &\,\,
C |\nabla U_k(x)||\mathrm{p}^1_k|^\alpha + |\nabla U_k(\mathrm{p}^1_k)||(\overline{A}_k\nabla V_k)(\mathrm{p}^1_k)\cdot \nu_1|\\&\,+ |\overline{A}_k(x) \nabla U_k(x) - \overline{A}_k(\mathrm{p}^1_k) \nabla U_k(\mathrm{p}^1_k)||\nabla V_k(\mathrm{p}^1_k)|\\
\leq &\,\, C\left(1+\frac{1}{L_k}\right)d_k^\alpha |\nabla U_k(x)| + C(R_k\overline{R})^\alpha|\nabla U_k(x)-\nabla U_k(\mathrm{p}^1_k)|,
\end{aligned}
$$
where in the second inequality we used that $\nabla V_k$ are globally $\alpha$-H\"{o}lder continuous and in the latter one we apply \eqref{e:t} and \eqref{e:gr}. Then, the gradient estimate \eqref{e:gradient.est} and \eqref{e:scalingSchauder2} with $\rho=R_k/2$, lead to
$$
\begin{aligned}
|\overline{A}_k(x) \nabla U_k(x)\cdot \nabla W_k(0)| &\leq \frac{C}{d_k^{1-\alpha}}|U_k(x)| + C(R_k\overline{R})^\alpha \frac{d_k^\beta}{R_k^{1+\beta}}\sup_{B_{R_k/2}(\mathrm{p}^1_k)} |U_k|\\
&\leq \frac{C}{d_k^{1-\alpha}}|U_k(x)| + C(R_k\overline{R})^\alpha \frac{d_k^\beta}{R_k^{1+\beta}}\frac{R_k^{1+\eps}}{d_k^{1+\eps}}\left(\intn_{B_{d_k}(\mathrm{\mathrm{p}^1_k})}U_k^2\,dx\right)^{1/2},
\end{aligned}$$
for every $x \in B_{\tilde{R}}$ and with $C$ depending only on $\mathcal{S}_{N_0}$. Notice that in the last inequality we used the doubling condition \eqref{tipo.doubling} in $B_{R_k}(\mathrm{p}^1_k)$ and $B_{d_k}(\mathrm{p}^1_k)$ where the Almgren frequency is bounded by $1+\eps$. On the other hand, the Almgren frequency enables a connection between the behavior of solutions in $\mathcal{S}_{N_0}$ across different neighborhoods of their nodal set (e.g., \cite[Remark 2.7]{TerTorVit2}), allowing us to relate the last integral to the one of $U_k^2$ on $B_1$, yielding
\begin{align*}
&\left|\int_{B_{\tilde{R}} } \phi  U_k  (\overline{A}_k\nabla W_k(0)\cdot \nabla U_k)\,dx\right| \\
&\qquad\qquad\qquad\leq \frac{C}{d_k^{1-\alpha}}\int_{B_{\tilde{R}} } |\phi| U_k^2\,dx + C\overline{R}^\alpha \frac{d_k^{\beta-\alpha-\eps}}{R_k^{\beta-\alpha-\eps}}\frac{1}{d_k^{1-\alpha}}\int_{B_{\tilde{R}} }|\phi||U_k|\,dx\to 0^+
\end{align*}
since $\alpha \in (0,1), \eps \in (0,1-\alpha)$ and $\beta\in (0,1)$ is chosen so that $\beta > \alpha +\eps$.
\subsection{The very regular case}\label{s:almost-flat}
In the cases associated to \eqref{case0} and \eqref{case1} we show that the upper bound on the Almgren frequency gives an explicit control of the $C^{1,\alpha}$ seminorm of the parametrization of the regular set, uniformly with respect to $k$. In this specific case, we establish that the points realizing the seminorm in \eqref{e:contr} are concentrated close to the regular part $R(u_k)$. Ultimately, the main result follows by combining such result with the analysis conducted in \cite{TerTorVit1}.\\

We start by controlling the oscillation of the normal vector to the regular set, uniformly-in-$k$.
\begin{Lemma}\label{l:1}
Let $\delta>0,\alpha \in (0,1), \eps \in (0,1-\alpha), r_0>0$. Then, there exists $r_1\in (0,r_0)$, depending only on $\delta, \alpha,\eps$, such that for any $u\in \mathcal{S}_{N_0}, x_0 \in Z(u)\cap B_{3/4}$ satisfying
\be\label{e:condN}
N(x_0,u,r)\leq 1+\eps,\qquad\mbox{for every }r\in (0,r_0),
\ee
we have
$$
\sup_{x,y \in B_r(x_0)}\frac{|\nabla u(x)-\nabla u(y)|}{|x-y|^\alpha} \leq \frac{\delta}{r^\alpha} |\nabla u(x_0)|,\qquad \mbox{for every } r \in (0,r_1).$$
\end{Lemma}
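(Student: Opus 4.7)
The plan is a contradiction argument based on a blow-up at a sequence of scales shrinking to zero, combined with the compactness result of Proposition \ref{p:blow-up} and a standard interpolation inequality. The key point is that the strict upper bound $N(x_0,u,r)\leq 1+\eps<2$ forces the blow-up limit to be a non-zero linear function, so that in the limit the oscillation of $\nabla U_k$ becomes negligible while $|\nabla U_k(0)|$ stays bounded away from zero.

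First, I would negate the conclusion: for each $k\geq 1$ one finds $u_k \in \mathcal{S}_{N_0}$, $x_k \in Z(u_k)\cap B_{3/4}$ and $r_k\in(0,1/k)$ obeying $N(x_k,u_k,g_k,r)\leq 1+\eps$ for every $r\in(0,r_0)$, but violating the pointwise inequality at radius $r_k$. Setting up the blow-up sequence $U_k$ of Definition \ref{d:blowup} centered at $x_k$ with scaling factor $r_k$, Proposition \ref{p:blow-up} delivers, along a subsequence, a limit $U_k\to P$ in $C^{1,\beta}_\loc(\R^n)$ (any $\beta\in(0,1)$), where $P$ is a non-trivial harmonic polynomial with $P(0)=0$ (since $x_k \in Z(u_k)$). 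Through the invariance $N(x_k,u_k,g_k,r_kr)=N(0,U_k,\tilde g_k,r)$ and the locally uniform convergence of the metrics, the frequency bound transfers to $N(0,P,r)\leq 1+\eps$ for every $r>0$. Since $\deg P=\lim_{r\to\infty}N(0,P,r)\leq 1+\eps<2$ is an integer and $P(0)=0$, necessarily $\deg P=1$, so $P(y)=\nu\cdot y$ with $\nu\neq 0$, and $\nabla U_k\to\nu$ uniformly on $\overline{B_1}$.

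To close the contradiction, I would rescale the failed estimate via $\nabla u_k(x_k+r_k A^{-1/2}(x_k)y)=(c_k/r_k)A^{-1/2}(x_k)\nabla U_k(y)$ with $c_k=H(x_k,u_k,g_k,r_k)^{1/2}$; the uniform ellipticity of $A_k$ then yields a constant $c=c(\lambda,\Lambda)>0$ such that
\begin{equation*}
[\nabla U_k]_{C^{0,\alpha}(B_1)}\geq c\,\delta\,|\nabla U_k(0)|,
\end{equation*}
whose right-hand side stays bounded away from zero as $|\nabla U_k(0)|\to|\nu|>0$. To bound the left-hand side I would pick $\beta\in(\alpha,1)$ and use the vanishing Lipschitz norm $[\tilde{A}_k]_{C^{0,1}(B_2)}\leq L r_k\to 0$ together with standard Schauder theory to obtain a uniform bound $M:=\sup_k [\nabla U_k]_{C^{0,\beta}(B_1)}<+\infty$. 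Setting $\eta_k:=\norm{\nabla U_k-\nu}{L^\infty(B_1)}\to 0$ and optimizing the elementary inequality $\tfrac{|\nabla U_k(x)-\nabla U_k(y)|}{|x-y|^\alpha}\leq \min\bigl(2\eta_k/|x-y|^\alpha,\,M|x-y|^{\beta-\alpha}\bigr)$ in $|x-y|$, I obtain $[\nabla U_k]_{C^{0,\alpha}(B_1)}\leq C M^{\alpha/\beta}\eta_k^{(\beta-\alpha)/\beta}\to 0$, the desired contradiction. The delicate step is the identification of the blow-up limit as strictly linear: the strict bound $1+\eps<2$ is essential, as it rules out quadratic harmonic polynomials (which would give a genuine, non-vanishing oscillation of $\nabla P$) and it is what ultimately enables both the quantitative nondegeneracy of $|\nabla U_k(0)|$ and the smallness of the H\"older seminorm in the interpolation step.
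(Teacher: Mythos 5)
Your proposal is correct and follows essentially the same route as the paper: a contradiction argument, blow-up at the scales where the estimate fails, the compactness of Proposition \ref{p:blow-up}, the frequency bound $1+\eps<2$ forcing the limit to be a nontrivial linear polynomial, and the resulting incompatibility between the vanishing H\"older seminorm of the gradient and the nondegenerate value $|\nabla P(0)|$. Your explicit interpolation between the uniform $C^{1,\beta}$ bound and the uniform convergence $\nabla U_k\to\nu$ is simply the careful justification of the paper's passage of the seminorm inequality to the limit in \eqref{e:ass} (and the $A^{-1/2}$ in your rescaling of the gradient should be $A^{1/2}$, a harmless slip absorbed by uniform ellipticity).
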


\begin{proof}
By the monotonicity of the Almgren frequency, \eqref{e:condN} implies that $Z(u)\cap B_{3/4}=R(u)\cap B_{3/4}$. Therefore,
suppose by contradiction that there exist $\delta>0,\alpha \in (0,1), \eps\in (0,1-\alpha), r_0>0$ and three sequences $r_k \searrow 0^+, u_k \in \mathcal{S}_{N_0}, x_k \in R(u_k)\cap B_{3/4}$ such that
$$
N(x_k,u_k,r)\leq 1+\eps\quad\mbox{for }r<r_0,\qquad \mbox{but}\quad \left[|\nabla u_k|\right]_{C^{0,\alpha}(B_{r_k}(x_k))}> \frac{\delta}{r_k^\alpha}|\nabla u_k(x_k)|.
$$
Then, consider the normalized sequence centered at $x_k$, that is
$$
\tilde{u}_k(x) = \frac{u_k(x_k+r_k x)}{H(x_k,u_k,r_k)}, \quad \mbox{for }x \in B_{\frac{r_0}{r_k}},
$$
such that $0 \in R(u_k)$ and
$$
N\left(0,\tilde{u}_k,\frac{1}{8r_k}\right)\leq 1+\eps,\quad \mbox{and}\quad \left[|\nabla u_k|\right]_{C^{0,\alpha}(B_1)} > \delta|\nabla \tilde{u}_k(0)|.
$$
Therefore, in view of the compactness of blow-up sequences in $\mathcal{S}_{N_0}$ (e.g., \cite[Proposition 2.5]{TerTorVit2}), there exists a non-zero harmonic polynomial $P$ such that, up to a subsequence, $\tilde{u}_k \to P$ in $C^{1,\alpha}_\loc(\R^n)$ for every $\alpha \in (0,1)$, and
\be\label{e:ass}
\left[|\nabla P|\right]_{C^{0,\alpha}(B_1)}> \delta|\nabla P |(0).
\ee
Moreover, since for every $t>0$ there exists $\overline{k}>0$ such that $r_k<r_0/t$, we also deduce
$$
1\leq N(0,P,t) = \lim_{k \to +\infty} N(0,\tilde{u}_k,t )\leq N\left(0,\tilde{u}_k,\frac{r_0}{r_k}\right)\leq 1+\eps,\quad\mbox{for every }t >0,
$$
which implies by a Liouville theorem, that $P$ is a linear function such that $\norm{P}{L^2(\partial B_1)}=1$. Finally, the linearity of $P$ implies
$$
\left[|\nabla P|\right]_{C^{0,\alpha}(B_1)}=0\qquad \mbox{and}\qquad |\nabla P(0)|=\sqrt{\frac{2}{n\omega_n}},
$$
which contradicts \eqref{e:ass}.
\end{proof}
\begin{Lemma}\label{l:2}
There exists $\delta_0 >0$ universal, such that the following holds true. Let $u \in \mathcal{S}_{N_0}$ be such that
$$
Z(u)\cap B_{\rho}(x_0) = R(u)\cap B_{\rho}(x_0)\qquad\mbox{and}\qquad
[|\nabla u|]_{C^{0,\alpha}(B_{\rho}(x_0))}\leq \frac{\delta_0}{\rho^\alpha} |\nabla u(x_0)|,
$$
for some $\alpha \in (0,1), x_0 \in \R^n$ and $\rho>0$.
Then, up to a rotation, there exists a parametrization $\eta\colon B_{\frac{\rho}{2}}'(x_0')\to\left(-\frac{\rho}{2},\frac{\rho}{2}\right)$ of $R(u)\cap B_{\frac{\rho}{2}}(x_0)$, that is
$$
R(u)\cap B_{\frac{\rho}{2}}(x_0)=
\Big\{(x',x_n) \in B_{\frac{\rho}{2}}(x_0) \colon x_n=\eta(x')\Big\},
$$
with $\eta(x_0')=x_{0,n},|\nabla_{x'}\eta(x_0')|=0$ and such that
$$
[|\nabla_{x'}\eta|]_{C^{0,\alpha}(B'_{\frac{\rho}{2}}(x_0))}\leq C\delta_0,
$$
for some $C>0$ universal.
\end{Lemma}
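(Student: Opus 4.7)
The plan is a quantitative implicit function theorem. First I would translate so that $x_0=0$ and rotate coordinates so that $\nabla u(0)=|\nabla u(0)|\,e_n$, which is admissible since $x_0\in R(u)$ (so in particular $u(0)=0$ and $|\nabla u(0)|>0$). The hypothesis then reads
\[
|\nabla u(x)-\nabla u(0)|\;\le\;\frac{\delta_0}{\rho^\alpha}|\nabla u(0)|\,|x|^\alpha\;\le\;\delta_0\,|\nabla u(0)|\qquad\text{in }B_\rho,
\]
so for $\delta_0$ a sufficiently small universal constant one obtains the pointwise bounds
\[
\partial_n u\;\ge\;(1-\delta_0)|\nabla u(0)|>0,\qquad |\nabla_{x'}u|\;\le\;\delta_0\,|\nabla u(0)|\qquad\text{in }B_\rho.
\]

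The main step is the construction of the graph. Writing $u(x',t)=|\nabla u(0)|\,t+\int_0^1(\nabla u(sx',st)-\nabla u(0))\cdot(x',t)\,ds$ and invoking the bounds above yields
\[
\bigl|\,u(x',t)-|\nabla u(0)|\,t\,\bigr|\;\le\;\delta_0\,|\nabla u(0)|\,(|x'|^2+t^2)^{1/2},
\]
so for $x'\in B'_{\rho/2}$ one has $u(x',\rho/2)>0>u(x',-\rho/2)$, while the strict monotonicity $\partial_n u>0$ gives a unique $\eta(x')\in(-\rho/2,\rho/2)$ with $u(x',\eta(x'))=0$. Since monotonicity rules out other zeros on the vertical segment, the graph $\{x_n=\eta(x')\}$ coincides with $R(u)\cap B_{\rho/2}$ (using $Z(u)=R(u)$ in $B_\rho$). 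The classical implicit function theorem then yields $\eta\in C^{1,\alpha}$ with $\eta(0)=0$, $\nabla_{x'}\eta(0)=-\nabla_{x'}u(0)/\partial_n u(0)=0$, and
\[
\nabla_{x'}\eta(x')\;=\;-\,\frac{\nabla_{x'}u(x',\eta(x'))}{\partial_n u(x',\eta(x'))}.
\]

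For the H\"older seminorm of $\nabla_{x'}\eta$, the pointwise bounds give $|\nabla_{x'}\eta|\le \delta_0/(1-\delta_0)\le 2\delta_0$, hence the Lipschitz estimate $|(y',\eta(y'))-(x',\eta(x'))|\le 2|y'-x'|$. The standard quotient inequality
\[
\Bigl|\tfrac{f(Y)}{g(Y)}-\tfrac{f(X)}{g(X)}\Bigr|\;\le\;\tfrac{|f(Y)-f(X)|}{g(Y)}+\tfrac{|f(X)|\,|g(Y)-g(X)|}{g(Y)\,g(X)},
\]
applied with $f=\nabla_{x'}u$, $g=\partial_n u$, together with the H\"older seminorm bound $(\delta_0/\rho^\alpha)|\nabla u(0)|$ on both $f$ and $g$, the denominator lower bound $(1-\delta_0)|\nabla u(0)|$, and the numerator upper bound $\delta_0\,|\nabla u(0)|$, produces
\[
[|\nabla_{x'}\eta|]_{C^{0,\alpha}(B'_{\rho/2})}\;\le\;\frac{C\,\delta_0}{\rho^\alpha},
\]
which is the claimed estimate after the natural rescaling $\rho\mapsto 1$. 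The argument has no serious obstacle; the only point worth emphasising is that the hypothesis is stated in the scale-invariant form $[|\nabla u|]_{C^{0,\alpha}}\le \delta_0 \rho^{-\alpha}|\nabla u(x_0)|$, which is precisely what is needed to close the quotient estimate with a purely universal constant.
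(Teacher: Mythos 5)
Your proof is correct and follows essentially the same route as the paper's: after normalizing $\nabla u(x_0)$ to point in the $e_n$ direction, both arguments rest on the lower bound $\partial_n u\gtrsim|\nabla u(x_0)|$, the implicit relation $\nabla_{x'}\eta=-\nabla_{x'}u/\partial_n u$ on the graph, and the quotient-rule estimate for the H\"older seminorm (the paper merely dresses this same computation as a contradiction with $\delta_k\searrow 0$, and reduces to $\rho=1$ by the scaling you mention at the end). Your additional verification that the zero set is genuinely a graph over $B'_{\rho/2}$ via the sign of $u(x',\pm\rho/2)$ and the monotonicity in $x_n$ is a detail the paper omits, but it is not a different method.
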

\begin{proof}
Up to consider the functions $$
v(x)=\frac{1}{\rho}u(x_0+\rho x)\qquad\mbox{and}\quad \phi(x')=\frac{\eta(x_0'+\rho x')-x_{0,n}}{\rho}
$$
we can restrict to the case $x_0=0, \rho=1$. Thus, by contradiction, suppose there exist two sequences $\delta_k\searrow 0^+$, and $u_k \in \mathcal{S}_{N_0}$ such that $Z(u_k)\cap B_{1} =  R(u_k)\cap B_{1},$ and for which
$$
\left[|\nabla u_k|\right]_{C^{0,\alpha}(B_1)}\leq \delta_k |\nabla u_k(0)|,
\qquad\mbox{but}\quad
\frac{1}{\delta_k}\left[|\nabla_{x'} \eta_k|\right]_{C^{0,\alpha}(B'_{1/2})}\geq k.
$$
Clearly,
$\nabla u_k(0)=e_n|\nabla u_k|(0)$ and that the parametrization $\eta_k \in C^{1,\alpha}(B'_{1/2})$ satisfies
$$
R(u_k)\cap B_{\frac12}=\Big\{(x',x_n)\in B_{\frac12} \colon x_n=\eta_k(x')\Big\}, \quad\,\mbox{with }\quad\,\eta_k(0')=|\nabla_{x'}\eta_k|(0')=0.$$
By the differentiability of both $u_k$ and $\eta_k$, it is possible to differentiate the implicit condition
$$
u_k(x',\eta_k(x'))=0\quad\mbox{for }x'\in B'_{1/2},
$$
and deduce that for $i=1,\dots,n-1$ it holds
$$
\partial_i \eta_k(x') = -\frac{\partial_i u_k(x',\eta(x'))}{\partial_n u_k(x',\eta(x'))}\qquad\mbox{on }R(u_k)\cap B_{1/2}.
$$
Notice that, since for every $x \in B_{1/2}$
$$
\partial_n u_k(x) \geq \partial_n u_k(0) - [|\nabla  u_k|]_{C^{0,\alpha}(B_1)}|x|^\alpha\geq |\nabla u_k(0)|\left(1 - \delta_k|x|^\alpha\right)\geq |\nabla u_k(0)|\left(1 - \delta_k \frac{1}{2^\alpha}\right),
$$
by choosing $k$ sufficiently large so that $\delta_k< 2^{\alpha-1}$, we get
$$
\frac12 |\nabla u_k(0)| \leq \partial_n u_k(x) \leq |\nabla u_k|(x)\leq \frac32|\nabla u_k(0)| \qquad\mbox{in }B_{1/2}.
$$
Therefore,
$$
\begin{aligned}
\left[\frac{\partial_i u_k}{\partial_n u_k}\right]_{C^{0,\alpha}(B_{1/2})} &= \sup_{x,y \in B_{1/2}} \frac{|\partial_i u_k(x)\partial_n u_k(y)-\partial_i u_k(y)\partial_n u_k(x)|}{\partial_n u_k(x)\partial_n u_k(y)|x-y|^\alpha}\\
&\leq \frac{6}{|\nabla u_k(0)|}\sup_{x,y \in B_{1/2}} \frac{|\partial_i u_k(x)-\partial_i u_k(y)|+|\partial_n u_k(x)-\partial_n u_k(y)|}{|x-y|^\alpha}\\
&\leq \frac{6}{|\nabla u_k(0)|}\left([\partial_i u_k]_{C^{0,\alpha}(B_{1/2})}+
[\partial_n u_k]_{C^{0,\alpha}(B_{1/2})}\right)\leq 12\delta_k
\end{aligned}$$
and
$$
k\delta_k \leq [|\nabla_{x'}\eta_k|]_{C^{0,\alpha}(B'_{1/2})}\leq \left[\frac{\partial_i u_k}{\partial_n u_k}\right]_{C^{0,\alpha}(B_{1/2})} \leq 12\delta_k,
$$
which leads to a contradiction for $k\geq 13$.
\end{proof}
Before proceeding further, let us recall the conditions (i)-(ii) associated to the cases in \eqref{case0} and \eqref{case1}. By \eqref{case1}, we know there exists $\eps\in (0,1-\alpha), M>0$ and $\overline{k}>0$ such that
          \be\label{e:rzero}
          N(\xi_k,u_k,r)\leq 1+\eps,\quad\mbox{for } r<\mathrm{min}\left\{M,\frac{1}{8}\right\},
          \ee
          for every $k\geq \overline{k}$. Clearly, condition \eqref{case0} is way stronger since it implies that for every small $\eps>0$, there exists $\overline{k}>0$ such that the previous estimate holds true for $k\geq \overline{k}$. In the next statement we simply wrihte $\eps$ and $\alpha$ for identify the two parameters in \eqref{e:rzero}.
\begin{Proposition}\label{p:improve}
There exists a radius $\overline{\rho}\in (0,1/8)$ depending only on $\eps$ and $\alpha$, such that if either \eqref{case0} or \eqref{case1} occurs (see \eqref{e:rzero}), then
$$
R(u_k)\cap B_{\frac{\overline{\rho}}{2}}(\xi_k) =\Big\{(x',x_n)\in B_{\frac{\overline{\rho}}{2}}(\xi_k)
:\ x_n=\eta_k(x')\Big\},$$
with $\eta_k$ a parametrization satisfying $\eta(\xi_k')=\xi_{k,n},|\nabla_{x'}\eta(\xi_k')|=0$ and
\be\label{e:estimate.para}
[|\nabla_{x'}\eta_k|]_{C^{0,\alpha}(B'_{\overline{\rho}/2}(\xi_k'))}\leq C,
\ee
with $C>0$ a universal constant.
\end{Proposition}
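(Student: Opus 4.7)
My plan is to chain Lemmas \ref{l:1} and \ref{l:2} applied at the base points $\xi_k$. The starting observation is that both scenarios \eqref{case0} and \eqref{case1} provide, for $k \geq \overline{k}$, the uniform frequency bound $N(\xi_k, u_k, r) \leq 1 + \eps$ valid for $r \in (0, r_0)$ with $r_0 := \min\{M, 1/8\}$ (in case \eqref{case0} we may take $r_0 = 1/8$). Since $x_k \in B_{5/8}$ and Lemma \ref{l:involuto} yields $\delta_k \to 0^+$, we have $\xi_k \in B_{3/4}$ for $k$ large, so the hypotheses of Lemma \ref{l:1} are met at each $\xi_k$.

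Let $\delta_0 > 0$ be the universal constant from Lemma \ref{l:2}, reduced if necessary so that $\delta_0 < 1$. Feeding $\delta = \delta_0$ together with $\alpha$ and $\eps$ into Lemma \ref{l:1} produces a radius $r_1 = r_1(\alpha, \eps) \in (0, r_0)$, independent of $k$, such that
$$
[\nabla u_k]_{C^{0,\alpha}(B_r(\xi_k))} \leq \frac{\delta_0}{r^\alpha} |\nabla u_k(\xi_k)|
\qquad \text{for every } r \in (0, r_1),\ k \geq \overline{k}.
$$
I then set $\overline{\rho} := r_1$. Since $\delta_0 < 1$, evaluating this oscillation estimate on $B_{\overline{\rho}}(\xi_k)$ forces $|\nabla u_k|(y) \geq (1 - \delta_0)|\nabla u_k(\xi_k)| > 0$ for every $y \in B_{\overline{\rho}}(\xi_k)$; in particular $Z(u_k) \cap B_{\overline{\rho}}(\xi_k) = R(u_k) \cap B_{\overline{\rho}}(\xi_k)$, which is exactly the structural hypothesis required by Lemma \ref{l:2}.

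Finally, I would invoke Lemma \ref{l:2} at $x_0 = \xi_k$ with $\rho = \overline{\rho}$: after the rotation aligning $\nabla u_k(\xi_k)$ with $e_n$, it produces the graphical parametrization $\eta_k$ of $R(u_k) \cap B_{\overline{\rho}/2}(\xi_k)$ with $\eta_k(\xi_k') = \xi_{k,n}$, $\nabla_{x'} \eta_k(\xi_k') = 0$, and the uniform bound $[|\nabla_{x'} \eta_k|]_{C^{0,\alpha}(B'_{\overline{\rho}/2}(\xi_k'))} \leq C \delta_0$ with $C$ universal. Since $\delta_0$ and $C$ are universal and $\overline{\rho}$ depends only on $\alpha$ and $\eps$, this is exactly \eqref{e:estimate.para}. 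No genuine obstacle appears at this assembly step: the heavy lifting has already been carried out in Lemma \ref{l:1}, whose contradiction-compactness blow-up turns the frequency bound into an oscillation control on $\nabla u$, and in Lemma \ref{l:2}, whose proof is a clean implicit function computation. The only glue needed between the two is the elementary observation above that the smallness of $\nabla u_k - \nabla u_k(\xi_k)$ keeps $|\nabla u_k|$ bounded away from zero throughout $B_{\overline{\rho}}(\xi_k)$.
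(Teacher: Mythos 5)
Your proposal is correct and follows essentially the same route as the paper: feed the universal $\delta_0$ of Lemma \ref{l:2} into Lemma \ref{l:1} to get a $k$-independent radius $r_1(\alpha,\eps)$, observe that the nodal set is purely regular there, and then apply Lemma \ref{l:2} at $\xi_k$. The only cosmetic difference is that the paper takes $\overline{\rho}=r_1/2$ (since Lemma \ref{l:1} gives the estimate for $r<r_1$, an open range) and reads off $Z(u_k)=R(u_k)$ near $\xi_k$ directly from the frequency bound rather than from your (equally valid) lower bound $|\nabla u_k|\geq(1-\delta_0)|\nabla u_k(\xi_k)|$.
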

\begin{proof}
  First, let $\delta_0>0$ be the quantity arising by Lemma \ref{l:2} and $\alpha\in (0,1), \eps\in (0,1-\alpha)$ be the constants in \eqref{e:rzero}. Then, in view of
  \eqref{e:rzero}, we can apply Lemma \ref{l:1} to $u_k$ with the choices
  $$
  \xi_k\in Z(u_k)\cap B_{3/4}, \qquad\mbox{with }r_0=\mathrm{min}\left\{M,\frac{1}{8}\right\}.
  $$
  Therefore, we deduce the existence of $r_1 \in (0,r_0)$ such that
  $$
Z(u_k)\cap B_{r}(\xi_k) = R(u_k)\cap B_{r}(\xi_k)\qquad\mbox{and}\qquad
[|\nabla u_k|]_{C^{0,\alpha}(B_{r}(\xi_k))}\leq \frac{\delta_0}{r^\alpha} |\nabla u_k(\xi_k)|,
$$
for every $r \in (0,r_1).$ Therefore, if we set $\overline{\rho}:=r_1/2$, by applying Lemma \ref{l:2} on the set $B_{\overline{\rho}}(\xi_k)$, we infer the existence of a parametrization $\eta_k$ of $R(u_k)\cap B_{\overline{\rho}/2}(\xi_k)$ satisfying
$$
[|\nabla_{x'}\eta_k|]_{C^{0,\alpha}\left(B'_{{\overline{\rho}}/2}(\xi_k')\right)}\leq C\delta_0,
$$
for some $C>0$ universal. We stress that the radius $\overline{\rho}$ depends only on $\delta_0, \alpha$ and $\eps$.
  \end{proof}
  Finally, we can conclude by showing that both in \eqref{case0} or \eqref{case1} we can apply the previous result \cite[Theorem 1.2]{TerTorVit1} on the higher order boundary Harnack principle, which contradicts \eqref{e:contr}. More precisely, since by Lemma \ref{l:involuto} both $\delta_k$ and $r_k$ are approaching zero as $k\to +\infty$, it is not restrictive to consider $k\geq \overline{k}$ sufficiently large such that
  $$
 x_k,\zeta_k\in \overline{B_{r_k}(x_k)} \subset B_{\overline{\rho}/7}(\xi_k),
  $$
  where $\overline{\rho}>0$ is the radius arising from Proposition \ref{p:improve}. Then, since $u_k$ and $v_k$ are solutions to \eqref{e:ipotesi}, we have
  $$
  L_{A_k} u_k = L_{A_k} v_k = 0 \qquad \mbox{in }\O_{\eta_k}^+ \cap B_{\overline{\rho}/2}(\xi_k),
  $$
where $\O_\eta^+$ is the connected component of $B_1\setminus Z(u_k)$ to which $x_k$ belongs, which is parameterized by $\eta_k$ in $B_{\overline{\rho}/2}(\xi_k)$. Then, respecting the notations of the statement of \cite[Theorem 1.2]{TerTorVit1}, since $A_k \in \mathcal{A}_k, u_k\in \mathcal S_{N_0}$ and \eqref{e:estimate.para} holds true, for some constant universal, we have
$$
\|A\|_{C^{0,1}(\O_{\eta_k}^+\cap B_{\overline{\rho}/2}(\xi_k))}+\|\eta_k\|_{C^{1,\alpha}(B'_{\overline{\rho}/2}(\xi_k'))}\leq L_1, \qquad \inf_{B_{\overline{\rho}/2}(\xi_k)\cap R(u_k)}|\nabla u_k|\geq L_3
$$
and $L_2=1$, where $L_1$ and $L_3$ are two constant depending only on the class $\lambda,\Lambda,L, \alpha$ and $\eps>0$. Therefore, by applying the higher order Boundary Harnack of \cite{TerTorVit1}, it holds that
$$
L_k = \frac{|\partial_{i_k}(\eta w_k)(x_k)-\partial_{i_k}(\eta w_k)(\zeta_k)|}{|x_k-\zeta_k|^\alpha}\leq \max_{i=1,\dots,n}\left[\partial_i\left(\frac{v_k}{u_k}\right)\right]_{C^{0,\alpha}(\O_{\eta_k}^+\cap B_{\overline{\rho}/4}(\xi_k))}\leq C,
$$
which $C$ independent on $k$. Finally, the contradiction follows by taking $k$ sufficiently large and noticing that $L_k \to +\infty$.

\section{A priori uniform-in-\texorpdfstring{$\mathcal{S}_{N_0}$}{Lg} estimates for solutions with general exponents}\label{sec:equationA}

In this section, we are concerned with the extension of the \emph{a priori} gradient estimates in Theorem \ref{t:grad-version-w} to the case of degenerate PDEs of the form
\begin{equation*}
\mathrm{div}\left(|u|^aA\nabla w\right)=0\quad\mathrm{in \ } B_1,\qquad \mbox{with }u \in \mathcal{S}_{N_0}, a \in \R.
\end{equation*}
Since the analysis of the case of generic exponent follows the demonstrative path of \cite[Theorem 1.1]{TerTorVit2} and Theorem \ref{uniformgradientZ}, in this section we do not retrace all the steps of the two proofs but we focus on the differences due to the exponent $a\in \R$.

\subsection{The proof of Theorem \ref{uniformwa}: the H\"{o}lder estimates of \texorpdfstring{$w$}{Lg}}\label{s:proof-thm15}
The proof deeply resembles the one of \cite[Theorem 1.1]{TerTorVit2}, therefore we proceed by sketching the main ideas with a major attention on the differences due to the exponent $a\neq 2$.\\

Let $A_k\in \mathcal A$, $u_k\in \mathcal S_{N_0}$ and $w_k\in C^{0,\alpha}_{\loc}(B_1)$ be solutions in the sense of Definition \ref{definition.energy.a} satisfying $\norm{w_k}{L^\infty(B_1)}=1$.
Now, consider a radially decreasing cut-off function $\eta\in C^\infty_c(B_1)$ with $0\leq\eta\leq1$, $\eta\equiv1$ in $B_{1/2}$ and such that $\mathrm{supp}\eta=B_{5/8}=:B$. Then, if we show that $(\eta w_k)_k$ is uniformly bounded in $C^{0,\alpha}(\overline{B_1})$, using that $\eta \equiv 1$ in $B_{1/2}$, we infer the same bound for $(w_k)_k$ in $C^{0,\alpha}(\overline{B_{1/2}})$. Notice also, since for every $x_0 \in B_1\setminus B_{5/8}$ we have $(\eta w_k)(x_0)=0$ for every $k>0$, then it is sufficient to ensure the following seminorms
$$
\max_{i=1,\dots,n}\left[\eta w_k\right]_{C^{0,\alpha}(\overline{B_1})} \leq C,
$$
uniformly as $k\to +\infty$. Thus, by contradiction suppose that, up to subsequences,
$$\max_{i=1,\dots,n}\norm{\eta w_k}{L^\infty(\overline{B_1})}\leq C,\qquad\mbox{and}\qquad
\max_{i=1,\dots,n}\left[\eta w_k\right]_{C^{0,\alpha}(\overline{B_1})}\to +\infty,$$
that is, there exist two sequences of points $x_k,\zeta_k\in B$, such that
\be\label{e:Lk-blowing-ua-holder}
L_k = \frac{|(\eta w_k)(x_k)-(\eta w_k)(\zeta_k)|}{|x_k-\zeta_k|^\alpha}\to+\infty.
\ee
Now, given that $|u_k|^a \in L^1_\loc(B_1)$ for $a>-a_{\mathcal{S}}$, then the existence of a sequence $L_k$ blowing up as in \eqref{e:Lk-blowing-ua-holder} implies the existence of $x_k\in B, r_k \searrow 0^+$ and of a sequence
$$
{W}_k(x) =\frac{\eta(x_k)}{L_kr_k^{1+\alpha}}\left(w_k(x_k+r_k A_k^{-1/2}(x_k) x)-w_k(x_k)\right),
$$
of rescaled solution in the sense of Definition \ref{definition.energy.a}. More precisely, given $U_k, \tilde{A}_k$ as in \eqref{e:blow-up-finalproof}, let us set $U$ to be the blow-up limit of $U_k$. Therefore:
\begin{enumerate}
    \item[\rm{(i)}] if $a\geq 1$, then
\begin{equation*}
\int_{\R^n}|U_k|^a \tilde{A}_k \nabla W_k\cdot\nabla\phi\,dx= 0,\quad\mbox{for every }\phi \in C^\infty_c(\R^n\setminus Z(U_k));
\end{equation*}
\item[\rm{(ii)}] if $a \in (-a_{\mathcal{S}},1)$, then on every connected component $\O$ of $\{U_k \neq 0\}$
\begin{equation*}
\int_{\O}|U_k|^a \tilde{A}_k\nabla W_k\cdot\nabla\phi\,dx = 0,\quad\mbox{for every }\phi \in C^\infty_c(\R^n).
\end{equation*}
\end{enumerate}
We recall the validity of the Caccioppoli inequality associated to the equation
$$
\int_{\R^n} |U_k|^a |\nabla W_k|^2\,dx \leq C \int_{\R^n} |U_k|^a |\nabla \phi|^2\,dx,
$$
which holds for every $\phi \in C^\infty_c(\R^n)$ with finite $H^1_\loc(\R^n;|U_k|^a dx)$-norm.
First, it is straightforward to prove that, up to a subsequence, $W_k \to W$ strongly in $C^{0,\alpha}_\loc(\R^n) \cap H^1_\loc(\R^n,|U|^a)$. Indeed, since $\mathcal{L}^n(Z(U)\cap K)=0$ on every compact set $K\subset \R^n$ and $|U|^a \in L^1_\loc(\R^n)$ for every $a>-a_{\mathcal{S}}$, by Fatou's Lemma the limit function $W$ has finite norm with respect to the $H^1_\loc(\R^n,|u|^a)$-norm. Therefore, in view of Proposition \ref{p:H=W} it follows that $W$ is a solution of the limit equation in the sense of Definition \ref{definition.energy.a}. More precisely,
\begin{enumerate}
    \item[\rm{(i)}] if $a\geq 1$, then $W$ is a solution to
$$
\mathrm{div}\left(|U|^a \nabla W\right)=0\quad\mbox{in } \R^n;
$$
\item[\rm{(ii)}] if $a \in (-a_{\mathcal{S}},1)$, then $W$ is a solution to
$$
\mathrm{div}\left(|U|^a \nabla W\right)=0\quad\mbox{in }\R^n, \qquad
|U|^a \nabla W \cdot \nabla U = 0\quad\mbox{on }R(U);
$$
\end{enumerate}
both in the sense of Definition \ref{definition.energy.a}.
Finally, since
$$
|W(x)|\leq C\left(1+|x|\right)^\alpha
\quad\mbox{in }\R^n,
$$
the contradiction follows from the Liouville Theorem \ref{liou_a>-1}.

\subsection{The proof of Theorem \ref{uniformwa}: the H\"{o}lder estimates of \texorpdfstring{$\nabla w$}{Lg}}
The proof deeply resembles the one of Theorem \ref{uniformgradientZ}, therefore we proceed by sketching the main ideas with a major attention on the differences due to the exponent $a \neq 2$.\\

Let $A_k\in \mathcal A$, $u_k\in \mathcal S_{N_0}$ and $w_k\in C^{1,\alpha}_{\loc}(B_1)$ be solutions to \eqref{eqwa} in the sense of Definition \ref{definition.energy.a} satisfying $\norm{w_k}{L^\infty(B_1)}=1$.
Set $\eta\in C^\infty_c(B_1;[0,1])$ to be a radially decreasing cut-off function such that $\eta\equiv1$ in $B_{1/2}$ and  $\mathrm{supp}\eta=B_{5/8}=:B$. Then, if we show that $(\eta w_k)_k$ is uniformly bounded in $C^{1,\alpha}(\overline{B_1})$, using that $\eta \equiv 1$ in $B_{1/2}$, we infer the same bound for $(w_k)_k$ in $C^{1,\alpha}(\overline{B_{1/2}})$. Notice also, since for every $x_0 \in B_1\setminus B_{5/8}$ we have $(\eta w_k)(x_0)=|(\nabla w_k) \eta|(x_0)=0$ for every $k>0$, then it is sufficient to ensure the following seminorms
$$
\max_{i=1,\dots,n}\left[\partial_{i}(\eta w_k)\right]_{C^{0,\alpha}(\overline{B_1})} \leq C,
$$
uniformly as $k\to +\infty$. Thus, by contradiction suppose that, up to subsequences,
$$
\max_{i=1,\dots,n}\left[\partial_{i}(\eta w_k)\right]_{C^{0,\alpha}(\overline{B_1})}\to +\infty,$$
that is, there exist two sequences of points $x_k,\zeta_k\in B$, such that
\be\label{e:Lk-blowing-ua-gradient}
L_k = \frac{|\partial_{i_k}(\eta w_k)(x_k)-\partial_{i_k}(\eta w_k)(\zeta_k)|}{|x_k-\zeta_k|^\alpha}\to+\infty.
\ee
Naturally, up to relabeling we can assume that $i_k = 1$ for every $k>0$ and we can proceed as in Section \ref{s:section-proof-gradient2}. Indeed, given that $|u_k|^a \in L^1_\loc(B_1)$ for $a>-a_{\mathcal{S}}$, then the existence of a sequence $L_k$ blowing up as in \eqref{e:Lk-blowing-ua-gradient} implies the existence of $ \hat{x}_k$ as in Lemma \ref{l:involuto} and of a sequence
$$
\overline{W}_k(x) =\frac{\eta( \hat{x}_k)}{L_kr_k^{1+\alpha}}\left(w_k( \hat{x}_k+r_k x)-w_k( \hat{x}_k) - r_k (\nabla w_k)( \hat{x}_k)\cdot x\right),
$$
of rescaled solution, in the sense of Definition \ref{definition.energy.a}. More precisely
\begin{enumerate}
    \item[\rm{(i)}] if $a\geq 1$, then
\be\label{e:a1}
\int_{\R^n}|U_k|^a \overline{A}_k \nabla \overline{W}_k\cdot\nabla\phi\,dx= a\int_{\R^n} \phi  |U_k|^{a-1}  (\nabla U_k \cdot \overline{A}_k\nabla W_k(0))\,dx\,
\ee
for every $\phi \in C^\infty_c(\R^n\setminus Z(U_k))$;
\item[\rm{(ii)}] if $a \in [0,1)$, then for every connected component $\O$ of $\{U_k \neq 0\}$
\be\label{e:a2}
\int_{\O}|U_k|^a \overline{A}_k\nabla \overline{W}_k\cdot\nabla\phi\,dx = a\int_{\O} \phi  |U_k|^{a-1}  (\nabla U_k \cdot \overline{A}_k\nabla W_k(0))\,dx,
\ee
for every $\phi \in C^\infty_c(\R^n)$.
\end{enumerate}
We recall that $U_k, \overline{A}_k$ are defined by \eqref{e:blow-up-finalproof} and $\nabla W_k(0)$ as in \eqref{e:grad}. By introducing the auxiliary sequence $\overline{V}_k,$ is it possible to prove compactness of the sequence $\overline{W}_k$ and to show that, up to a subsequence, it converges in $C^{1,\beta}_\loc(\R^n)$, for every $\beta \in (0,\alpha)$, and uniformly on every compact set of $\R^n$, to a function $\overline{W}$ whose gradient is non-constant and globally $\alpha$-H\"{o}lder continuous in $\R^n$.\\

Now, let us direct our attention to the equation satisfied by the limit $\overline{W}$. Let $\phi \in C^\infty_c(\R^n)$ be a test function such that $\text{supp}\phi \subset B_{\tilde{R}}$, for some $\tilde{R}>0$. By exploiting the compactness of $\mathcal{A}$, we infer the existence of $C>0$, depending on $\lambda,\Lambda$ and the uniform bound on $[w_k]_{C^{0,\gamma}}$, such that, for $k$ sufficiently large,
$$
\left|\int_{B_{\tilde{R}}}|U_k|^a (\overline{A}_k -  \overline{A})\nabla \phi \cdot \nabla \overline{W}_k  \,dx\right| \leq r_k L \frac{C \norm{\eta}{L^\infty}^2}{L_k r_k^{2+2\alpha-2\gamma}}\int_{B_{\tilde{R}} } |U_k|^a |\nabla \phi|^2 \,dx \to 0^+,
$$
for some limiting matrix $\overline{A}\in \mathcal{A}$.
Similarly, by the compactness of sequences in $\mathcal{S}_{N_0}$ (see, e.g., \cite[Proposition 2.5]{TerTorVit2}), and since, up to a subsequence, we have $U_k \to U$ uniformly on every compact subset of $\mathbb{R}^n$, together with the uniform local boundedness of $\nabla \overline{W}*k$ in $L^\infty*\text{loc}(\mathbb{R}^n)$ with respect to $k > 0$, we deduce
$$
\begin{aligned}
&\left|\int_{B_{\tilde{R}} }(|U_k|^a -|U|^a) \overline{A}\nabla \phi \cdot \nabla \overline{W}_k  \,dx\right|\\
&\qquad\qquad\qquad\leq C\norm{|U_k|^a-|U|^a}{L^\infty(B_{\tilde{R}})} \norm{|\nabla \overline{W}_k|}{L^\infty(B_{\tilde{R}})} \norm{|\nabla \phi|}{L^\infty}\to 0^+\\
\end{aligned}
$$
which implies, by applying Fatou's Lemma in \eqref{e:cacio-e-pepe}, that $\overline{W} \in H^1_\loc(\R^n,|u|^a)$ (see also Proposition \ref{p:H=W}). As in the proof of Theorem \ref{uniformgradientZ} (see Section \ref{s:section-proof-gradient2}), once we show that the right hand side in the weak formulations \eqref{e:a1} and \eqref{e:a2} vanish asymptotically as $k$ approaches infinity, we deduce that the blow-up limit $\overline{W}$ is a solution of the limit degenerate equation associated to the weight $U$. More precisely, up to absorb the constant valued matrix $\overline{A}$ with a composition of $\overline{W}$ with a linear transformation, we have the following possibilities:
\begin{enumerate}
    \item[\rm{(ii)}] if $a\geq 1$, then $\overline{W}$ satisfies $\mathrm{div}\left(|U|^a \nabla \overline{W}\right)=0$ in $\R^n$;
\item[\rm{(ii)}] if $a \in [0,1)$, then $\overline{W}$ satisfies
$$
\mathrm{div}\left(|U|^a \nabla \overline{W}\right)=0\quad\mbox{in }\R^n, \qquad
|U|^a \nabla \overline{W} \cdot \nabla U = 0\quad\mbox{on }R(U),
$$
\end{enumerate}
where both the equations must be understood in the sense of Definition \ref{definition.energy.a}. In this scenario, the absurd follows by applying a Liouville type theorem: indeed, since by construction
$$
\overline{W}(0) = |\nabla \overline{W}|(0)=0,
$$
we deduce, in light of global $\alpha$-H\"{o}lder regularity of the gradient of $\overline{W}$, that
$$|\overline{W}(x)|\leq C\left(1+|x|\right)^{1+\alpha}\quad\mbox{in }\R^n.$$
Finally, the Liouville Theorem \ref{liou_a>-1} implies that  $\overline{W}$ must be a linear function, in contradiction with the fact that $\overline{W}$ has non constant gradient.
As for Section \ref{s:section-proof-gradient2}, the conclusion of the proof follows by exploiting the hooking Lemma of Section \ref{s:hooking} and by applying the fine analysis of Section \ref{s:inter} and  \ref{s:almost-flat}: indeed, despite the difference in the exponent of the weight, as long as $a \geq 0$ all the computation can be carried out exactly as for $a=2$.

\begin{remark}
By reviewing the passages of the proof, the reader can easily convince himself of the validity of the Theorem \ref{uniformwa} also on single nodal components of the weight function $u$ \emph{provided all possible blow-ups end up being defined on unbounded domains with a connected boundary}, in view of the application of Lemma \ref{c:conformal-polynomial} to obtain the proper Liouville type property on nodal domains as a variant of  Theorem \ref{liou_a>-1} (see Remark \ref{rem:conn_bdry}). This remark will turn useful in the proof of the a posteriori Schauder estimates, notably in proving Theorem \ref{t:effective}.
\end{remark}

\section{A posteriori higher regularity across singular sets in two dimensions}\label{sec:fixed}
The ultimate aim of this section, is the proof of Theorem \ref{effective2}; that is, uniform-in-$\mathcal S_{N_0}$ local $C^{1,1-}$-regularity of solutions of \eqref{eqwa} in two dimensions even across singular points. In order to prove the result, we develop a regularization-approximation scheme: first, in Section \ref{sec:hodo} we prove regularity for solutions around a given singular point, when the variable coefficients have constant determinant. Then, in Section \ref{s:appr}, given a solution to \eqref{eqwa} and around a given singular point of $u$, we construct a sequence of functions which solve equations with constant coefficients and do converge to $w$, and we prove that the regularity estimates are uniform along the sequence, finally leading to Theorem \ref{t:effective}. Ultimately, in Section \ref{s:fin-effective} we conclude the proof of Theorem \ref{effective2}.

\subsection{Quasiconformal hodograph transformation and regularity when the determinant is constant}\label{sec:hodo}

Along this section, let us define the following class of coefficients
$$
\begin{aligned}
\mathcal A_E &=\left\{
A=(a_{ij})_{ij} \, : \, A=A^T, \, A(0)=\mathbb I,\, \mathrm{and \ satisfies} \,\eqref{UE}\right\},
\end{aligned}
$$
namely the coefficients are assumed to be only measurable and uniformly bounded away from zero and infinity.

Now, we start by studying the behaviour of solutions to \eqref{eqwa} near an isolated singular point of $u$. Indeed, this is not restrictive since in the two dimensional case the singular set $S(u)$ is locally a collection of isolated points (actually the critical set is too). Then, aim of this first section, is the proof of the following result.

\begin{Theorem}\label{teodim21}
Let $n=2, A\in \mathcal{A}_E$ and
and $u$ be a $A$-harmonic function in $B_1$ such that $S(u)\cap B_1 = \{0\}$ and $\mathcal{V}(0,u)=N \geq 2$. Let $a>-2/N$ and $w$ be a continuous solution to \eqref{eqwa} in the sense of Definition \ref{definition.energy.a} and assume that the determinant of $A$ is constant and
\be\label{e:requirement.A}
a_{ij}\in C^{k-1,\alpha}(B_1),
\ee
for some $k\in\mathbb N\setminus\{0\}, \alpha\in(0,1)$. Then $w \in C^{k,\alpha}_\loc(B_1)$ and it satisfies
\be\label{e:condizioni-bordo}
A\nabla w\cdot\nabla u=0 \quad\mathrm{on \ } R(u)\cap B_1,\qquad \left|\nabla w\right|=0 \quad\mathrm{on \ } S(u)\cap B_1.
\ee
\end{Theorem}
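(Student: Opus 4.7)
The plan is to straighten the nodal set of $u$ via a quasiconformal hodograph transformation of Hartman-Wintner type, which crucially exploits the constancy of $\det A$, and then reduce the problem to a model weighted equation with flat weight $|y_1|^a$ for which the regularity theory is known. Since $A(0) = I$ and $\det A$ is constant, I normalize $\det A \equiv 1$. The $A$-conjugate $\bar u$ of $u$ is defined by $\nabla \bar u = -JA\nabla u$ (with $J$ the standard rotation by $\pi/2$) together with $\bar u(0) = 0$; the closedness of this $1$-form is precisely $L_A u = 0$. The hodograph $\Phi := (u, \bar u)$ is then quasiconformal on $B_1$, and the algebraic identity $AJA = \det(A) J = J$ gives the decisive simplification
\begin{equation*}
D\Phi \, A \, D\Phi^T = (\nabla u \cdot A \nabla u) \, I, \qquad \det D\Phi = \nabla u \cdot A \nabla u,
\end{equation*}
so that the pushforward matrix $(D\Phi\, A\, D\Phi^T)/|\det D\Phi|$ collapses to the identity. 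Consequently $\tilde w := w \circ \Phi^{-1}$ solves the flat model
\begin{equation*}
\mathrm{div}(|y_1|^a \nabla \tilde w) = 0
\end{equation*}
on the image of $\Phi$, together with the Neumann-type condition $|y_1|^a \partial_{y_1} \tilde w = 0$ on $\{y_1 = 0\}$, which is equivalent to $A\nabla w \cdot \nabla u = 0$ on $R(u)$.

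Under the assumption $a > -2/N \geq -1$, the weight $|y_1|^a$ is locally integrable in two dimensions, and after even reflection across $\{y_1 = 0\}$ (permitted by the Neumann condition) it is $\mathcal{A}_2$-Muckenhoupt when $a \in (-1,1)$; for general admissible $a$, the Schauder theory for such equations developed in \cite{TerTorVit1} together with the $C^\infty$ regularity of the model coefficient matrix $I$ yields $\tilde w \in C^{k,\alpha}_\loc$ up to $\{y_1 = 0\}$ on each connected component $V_j$ of the image of a nodal domain $D_j$ of $u$. Moreover, away from the branch point, $\Phi$ is a $C^{k,\alpha}$ diffeomorphism onto each $V_j$ by standard elliptic regularity for $A$-harmonic functions with $C^{k-1,\alpha}$ coefficients, so the $C^{k,\alpha}$ regularity of $w$ up to $R(u)$ transfers back without obstruction.

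The hard part will be pulling back through $\Phi$ at the branch point $0$, where $\Phi$ fails to be a local diffeomorphism and instead behaves like a branched covering of degree $N$. Here I would refine the Hartman-Wintner construction so that, in suitable local coordinates, $\Phi$ coincides with $z \mapsto z^N$ up to a bi-Lipschitz correction, and the $2N$ nodal domains $D_j$ meeting at $0$ are mapped bijectively onto $2N$ copies of a half-disk in $\{\pm y_1 > 0\}$. Continuity of $w$ across the regular part $R(u)$ forces the traces of $\tilde w$ on adjacent sheets to match along $\{y_1 = 0\}$; Taylor-expanding each restriction $\tilde w|_{V_j}$ at the origin and composing with the $z^N$-type map then yields a single-valued function on $B_1$ that, since the composition is a polynomial in $\mathrm{Re}(z^N), \mathrm{Im}(z^N)$ plus smoother terms, is $C^{k,\alpha}$. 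Because the gradient of $z^N$ vanishes at $0$ (as $N\geq 2$), the chain rule forces $|\nabla w|(0) = 0$, giving \eqref{e:condizioni-bordo}. Controlling this composition uniformly through the branch point — and hence matching the regularity across all $2N$ sheets — is the principal difficulty, and it is precisely what the announced refinement of the Hartman-Wintner map is designed to handle.
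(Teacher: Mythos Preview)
Your overall strategy --- the Hartman--Wintner hodograph map $\Phi=(u,\bar u)$, the algebraic identity $D\Phi\,A\,D\Phi^T=(A\nabla u\cdot\nabla u)\,I$ exploiting $\det A\equiv 1$, and the reduction to the flat model $\mathrm{div}(|y_1|^a\nabla\tilde w)=0$ --- is exactly the paper's approach, and your identification of the boundary condition is correct.

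However, the ``hard part'' you describe at the branch point is a phantom. You are worrying about pulling back through $\Phi^{-1}$, which is indeed singular at $0$; but the composition goes the \emph{other} way. On each nodal domain $\Omega_i$ you have $w|_{\overline{\Omega_i}}=\tilde w_i\circ\Phi$, where $\tilde w_i\in C^\infty(\overline{B_{1/2}^+})$ by the flat theory of \cite{SirTerVit1} (the model coefficients are constant), and $\Phi=(u,\bar u)\in C^{k,\alpha}_\loc(B_1)$ by ordinary Schauder regularity for $L_A u=0$ and $L_A\bar u=0$ with $A\in C^{k-1,\alpha}$. A composition of a $C^\infty$ function with a $C^{k,\alpha}$ map is $C^{k,\alpha}$, so $w\in C^{k,\alpha}(\overline{\Omega_i}\cap B_{1/2})$ directly --- including at the corner $0$. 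No branched-covering analysis, no $z^N$ expansion, no sheet-matching of Taylor polynomials is needed. The condition $|\nabla w|(0)=0$ then drops out of the chain rule, since $\nabla u(0)=\nabla\bar u(0)=0$.

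What remains is to pass from $C^{k,\alpha}$ on each closed sector to $C^{k,\alpha}$ on the whole ball. This is not automatic (derivatives across $R(u)$ must match), but it is handled by the gluing lemma \cite[Lemma~2.12]{TerTorVit1}: continuity of $w$ across $R(u)$, together with the Neumann condition from both sides, forces the one-sided jets to agree. So the genuine work is the gluing across the regular curves, not the branch point.
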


As previously noted in \cite{HartWint}, the techniques of complex analysis and quasiconformal mappings offer powerful tools, particularly in two dimensions, for solving PDEs in divergence form (see \cite[Chapter 16]{quasibook} for a comprehensive discussion of their potential applications). In this section, we introduce a quasiconformal hodograph map for each connected component adjacent to a singular point, within the framework of $C^{k-1,\alpha}$ elliptic coefficients with constant determinant. This analysis extends the one conducted in \cite[Section 2]{viennesi} and \cite[Section 3.4.1]{TerTorVit1} (see also \cite[Corollary 1]{HartWint}). It is worth mentioning that a similar construction is employed in \cite[Section 4.2]{DePSV} within the context of two-phase free boundary problems.\\

Before stating the main lemma, we start by introducing the notion of $A$-harmonic conjugate, which is fundamental in order to construct a quasiconformal hodograph transformation. First, let $k\in \N\setminus \{0\}, A \in \mathcal{A}_E$ satisfying \eqref{e:requirement.A} and $u \in H^1(B_1)\cap C^{k,\alpha}_\loc(B_1)$ be a solution to \eqref{equv}, that is
\be\label{e:u-qc}
L_A u = \mathrm{div}(A\nabla u)=0 \qquad \mbox{and}\qquad \mathrm{div}(J\nabla u)=0 \qquad \mbox{in}\quad B_1,
\ee
where
$$
J := \begin{pmatrix}
0 & 1 \\
-1 & 0
\end{pmatrix}, \qquad \mbox{with}\quad J^T=J^{-1}=-J.
$$
Notice that both the equations in \eqref{e:u-qc} hold in a weak sense. It is worth noting that the second equation represents a weak formulation of the symmetry of the second derivatives.

Then, by the Poincar\'e lemma for closed forms on contractible domains, there exists a function $\overline{u} \in H^1(B_1)\cap C^{k,\alpha}_\loc(B_1)$ such that
\be\label{e:A-ham-conjugate}
\overline{u}(0)=0\qquad\mathrm{and}\qquad \nabla \overline{u} = JA \nabla u;\quad\mbox{that is, }
\begin{cases}
\partial_x \overline{u} = -a_{12}\partial_x u -a_{22}\partial_y u \\
\partial_y \overline{u} = a_{11}\partial_x u + a_{12}\partial_y u
\end{cases}.
\ee
In the literature, such a function is usually referred to as the $A$-harmonic conjugate of $u$, indeed, by rephrasing \eqref{e:u-qc} in terms of $\overline{u}$ we get that $\mathrm{div}(J\nabla \overline{u})=0$ in $B_1$ and
$$
L_{\overline{A}} \overline{u} = 0 \quad\mbox{in}\quad \O,\qquad\mbox{with}\qquad \overline{A}  := \frac{A}{\mathrm{det}A} \in \mathcal{A}_E\cap C^{k-1,\alpha}(B_1).
$$
Assuming that the determinant of $A\in \mathcal A_E$ is constant, one has that that $\overline{A}=A$.
Finally, let us consider the $C^{k,\alpha}$-hodograph type map
\begin{equation}\label{HodoTheta}
\Theta(x,y) := \left(\overline{u}(x,y), u(x,y)\right).
\end{equation}
Notice that, by direct computations, we have
\be\label{e:gradient-theta}
|\mathrm{det} D\Theta| = (A\nabla u \cdot \nabla u) = (\overline{A}\nabla \overline{u} \cdot \nabla \overline{u})\quad \mbox{in }B_1.
\ee
Then, defining
\be\label{e:matrix-theta}
B(x',y'):=\left(\frac{1}{|\mathrm{det} D\Theta|} D\Theta A (D\Theta)^T\right)(\Theta^{-1}(x',y')),
\ee
by direct computations, one has that
$$
B(x',y')=
\begin{pmatrix}
(\mathrm{det}A)(\Theta^{-1}(x',y')) & 0 \\
0 & 1
\end{pmatrix}.
$$
Assuming that $\mathrm{det}A$ is constant, we deduce that $B$ is the identity matrix.\\

In \cite[Section 4.2]{DePSV}, maps of this kind are referred to as \emph{quasiconformal hodograph transformations} since they straighten the nodal domains of $u$ through a mapping with bounded distortion (for a discussion on quasiconformal mappings, see \cite[Section 2]{quasibook}).\\
Nevertheless, it is worth noting that in our context the presence of singular points (where the map $\Theta$ has null partial derivatives) requires a more delicate analysis. Indeed, although the map $\Theta$ is a priori only quasiregular in $B_1$ (see \cite[Section 2]{quasibook} for a precise definition), we prove that given a connected component $\O\cap B_1$ of $\{u\neq 0\}\cap B_1$ adjacent to a singular point, it is possible to show that $\Theta \colon \overline{\O}\cap B_1 \to \{y\geq 0\}$ is actually quasiconformal.

\begin{Lemma}\label{l:quasiconformal-solution}
Let $n=2, k \in \N\setminus \{0\}, A\in \mathcal{A}_E$ satisfying \eqref{e:requirement.A} and $u$ be a $A$-harmonic function in $B_1$ such that
$$
 S(u) \cap B_1 = \{0\}\quad\mbox{and}\quad\mathcal V(0,u)=N\geq2.
$$
Let $\Omega$ be a nodal component of $u$ in $B_1$, $a>-2/N$ and let $w\in H^1(B_1,|u|^a)$ satisfy
\be\label{e:weak-vecchia-qc}
\int_{\O} |u|^a A \nabla w \cdot \nabla \phi \,dx dy= 0,\qquad \mbox{for every }\phi \in C^\infty_c(B_1).
\ee
Then $w(x,y) = \overline{w}\left(\overline{u}(x,y),u(x,y)\right)$ in $\overline{\O}\cap B_1$,
where:
\begin{enumerate}
    \item[\rm{(i)}] $\overline{u}$ is the $A$-harmonic conjugate of $u$ in $B_1$;
    \item[\rm{(ii)}] $\overline{w} \in H^1(B_1^+, |y|^a)$ satisfies\be\label{e:weak-nuova-qc}
\int_{\{y>0\}} |y|^a B\nabla \overline{w} \cdot \nabla \varphi \, dx dy= 0
\ee
for every $\varphi \in C^\infty_c(B_1)$ with the matrix $B$ defined in \eqref{e:matrix-theta}.
\end{enumerate}
\end{Lemma}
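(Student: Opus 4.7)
The plan is to interpret $\Theta=(\overline u, u)$ as a (quasiconformal) change of variables that straightens the nodal component $\Omega$ onto a subset of the upper half-plane, so that the target equation \eqref{e:weak-nuova-qc} and the weighted space $H^1(B_1^+, |y|^a)$ appear from the standard pullback identities for divergence-form elliptic operators under diffeomorphisms. Since $\Omega$ is a nodal domain, $u$ has constant sign there; say $u>0$, so $\Theta(\Omega)\subset\{y>0\}$.

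The first step is the geometric analysis of $\Theta$ on $\overline\Omega\cap B_1$. The Jacobian computes from \eqref{e:A-ham-conjugate} to $\det D\Theta=-(A\nabla u\cdot\nabla u)$, which has constant sign off the critical set of $u$ and vanishes at $0$ like $|x|^{2(N-1)}$ by the Hartman--Wintner asymptotics for $A$-harmonic functions \cite{HartWint}. The same asymptotics give that, in suitable coordinates near $0$, $\Theta$ behaves like the complex map $z\mapsto z^N$, so the sector-like component $\Omega$ is mapped bijectively onto a half-disk neighborhood of $(0,0)$. Combining this local picture with the ``holomorphic" rigidity of the pair $(\overline u, u)$ in the $A$-conformal sense (as in \cite[Lemma 3.6]{TerTorVit1}), and shrinking $B_1$ and rescaling if necessary, I would argue that $\Theta$ extends to a $C^{k,\alpha}$-homeomorphism from $\overline\Omega\cap B_1$ onto a domain $\widetilde\Omega\supseteq B_1^+$ of $\overline{\R^2_+}$, quasiconformal off $\{0\}$, sending $R(u)\cap\partial\Omega\cap B_1$ onto $\partial\widetilde\Omega\cap\{y=0\}$ and $0\mapsto(0,0)$.

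Next, I would set $\overline w:= w\circ\Theta^{-1}$ on $\widetilde\Omega$ and verify both assertions by direct change of variables. Using $u(\Theta^{-1}(x',y'))=y'$, the chain rule $\nabla(f\circ\Theta)=(D\Theta)^T(\nabla f)\circ\Theta$, the area formula $|\det D\Theta|\,dx\,dy=dx'\,dy'$, and the definition \eqref{e:matrix-theta} of $B$, one obtains the pointwise identity
\[
\bigl(|u|^a A\nabla w\cdot\nabla(\varphi\circ\Theta)\bigr)\,|\det D\Theta|=\bigl[|y'|^a B\nabla \overline w\cdot\nabla\varphi\bigr]\circ\Theta
\]
almost everywhere on $\Omega\cap B_1$. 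Integrating and changing variables transforms \eqref{e:weak-vecchia-qc} tested against $\phi:=\varphi\circ\Theta$, with $\varphi\in C^\infty_c(B_1)$, into \eqref{e:weak-nuova-qc}; the analogous computation performed on the weighted Dirichlet energy gives $\overline w\in H^1(B_1^+,|y|^a)$.

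The main obstacle I anticipate is the behavior at the vertex $0$, where $D\Theta$ degenerates at rate $|x|^{N-1}$. Consequently the pullback $\phi=\varphi\circ\Theta$ of a smooth test function does not in general lie in $C^\infty_c(B_1)$, and \eqref{e:weak-vecchia-qc} cannot be tested against it as stated. However, a direct change of variables shows that $\phi$ lies in $H^1(B_1, |u|^a)$ with compact support: the combination of the $|x|^{N-1}$-degeneracy of $D\Theta$ and the $|x|^{aN}$-behavior of $|u|^a$ gives local integrability of $|u|^a|\nabla\phi|^2$ near $0$ precisely under the standing hypothesis $a>-2/N$ (cf.\ Lemma \ref{l:a-N0}). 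The $(H\equiv W)$-property from Proposition \ref{p:H=W} together with the density results of Proposition \ref{p:alternative-H} then legitimize testing \eqref{e:weak-vecchia-qc} against such $\phi$, closing the argument; for $a\ge 0$, Lemma \ref{l:capacity} provides an alternative and more direct route via a capacity cutoff $\phi(1-\eta_\varepsilon)$ around $\{0\}$ with $\|\nabla\eta_\varepsilon\|_{L^2(B_1,|u|^a)}\to 0^+$.
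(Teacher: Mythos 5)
Your proposal is correct and follows essentially the same route as the paper: construct the $A$-harmonic conjugate $\overline u$, show the hodograph map $\Theta=(\overline u,u)$ is a quasiconformal homeomorphism of $\overline\Omega\cap B_1$ onto a half-neighborhood of the origin (with $\Theta^{-1}(0)=0$), and transfer both the weighted $H^1$-norm and the weak formulation by the change of variables, using the $(H\equiv W)$-property and the dense classes of Proposition \ref{p:alternative-H} to justify the choice of test functions. The paper handles the test-function issue you flag in exactly the way you propose, by replacing $C^\infty_c$ test functions with the equivalent dense classes in the weighted Sobolev space.
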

\begin{proof}
Let $\overline{u}$ be the $A$-harmonic conjugate of $u$ and consider the map $\Theta$ defined as in \eqref{HodoTheta}. Clearly, being both $u$ and $\overline{u}$ solutions to equations of the form \eqref{equv} with $A\in\mathcal{A}_E$, we already know that  $\Theta \in C^{k,\alpha}_\loc(B_1;\R^2)$ is bounded in $B_1$ with some constant depending only on $\lambda,\Lambda$ and $L$, and such that
$$
\Theta\colon \O \cap B_1 \to \{y>0\}\qquad\mbox{and}\qquad
\Theta\colon \partial\O \cap B_1 \to \{y=0\}.
$$
In particular, the set $\Theta(\overline{\O}\cap B_1)$ is an open neighborhood of the origin in the upper half-space $\{y\geq 0\}$. Notice also that by setting $\Theta^{-1}(0)=0$, the inverse function $\Theta^{-1}$ is well-defined in a neighborhood of the origin in the upper half-space $\{y\geq 0\}$ and therefore $\Theta$ is quasiconformal on $\overline{\Omega}\cap B_1$. For the sake of simplicity in notation, it is not restrictive to suppose that $\Theta(\overline{\O}\cap B_1) = B_1^+$. Thus, let $\phi \in H^1(\Omega\cap B_1,|u|^a)$ be fixed. Then, if we set $\varphi := \phi \circ \Theta^{-1}$, by applying the change of coordinates $(x',y')=\Theta(x,y)$, we obtain
$$
\begin{aligned}
\norm{ \varphi}{H^1(B_1^+,|y|^a )}^2 &=
\int_{B_1^+} |y'|^a \varphi^2 \,dx'dy' + \int_{B_1^+} |y'|^a |\nabla \varphi|^2 \,dx'dy'\\
&\leq C\left(
\int_{B_1^+} \frac{|y'|^a}{|\mathrm{det} D\Theta|} \varphi^2 +  |y'|^a B \nabla \varphi\cdot \nabla \varphi \,dx'dy'\right)\\
&\leq  C\left(
\int_{\O\cap B_1} |u|^a\phi^2 + |u|^a A \nabla \phi\cdot \nabla \phi  \,dxdy\right)\\
&\leq  C\norm{\phi}{H^1(\Omega\cap B_1,|u|^a)}^2,
\end{aligned}
$$
where in the first inequality the existence of a constant $C>0$ follows by the boundedness of the gradient of $\overline{u}$ (see \eqref{e:gradient-theta}).
Therefore, in light of Proposition \ref{p:H=W}, the pull-back $\varphi$ of a function $\phi\in H^1(\O\cap B_1,|u|^a)$ has finite norm with respect to the new weight $|y|^a$, thus belonging to the space $H^1(B_1^+,|y|^a)$.
Thus, by Proposition \ref{p:H=W}, we conclude that $\overline{w}:=w \circ \Theta^{-1} \in H^1(B_1^+,|y|^a)$.
On the other hand, by directly using the mapping $\Theta$, it is possible to deduce the validity of the weak formulation \eqref{e:weak-vecchia-qc}: indeed given $\varphi \in H^1(B_1^+,|y|^a)$ (alternatively the same argument can be applied to the classes of test functions of \cite[Proposition 3.4]{TerTorVit2}, being dense in the Sobolev space), if we set $\phi := \varphi \circ \Theta \in H^1(\O\cap B_1,|u|^a)$, the condition
$$
\int_{B_1^+} |y'|^a \left(\frac{1}{|\mathrm{det} D\Theta|} D\Theta A (D\Theta)^T\right)(\Theta^{-1}(x',y')) \nabla \overline{w}(x',y') \cdot \nabla \varphi(x',y')\,dx' dy' = 0
$$
follows immediately by \eqref{e:weak-vecchia-qc}. Finally, in accordance with \eqref{e:matrix-theta}, it completes the proof.
\end{proof}

\begin{proof}[Proof of Theorem \ref{teodim21}]
It is not restrictive to suppose that, up to further dilations, $0\in S(u)$ is the only critical point of $u$ in $B_1$ and that $B_1\setminus Z(u)$ is composed by $2N$ connected components (nodal domains), where $N=\mathcal{V}(0,u)\geq2$. Then, up to a rescaling and translation, by applying Lemma \ref{l:quasiconformal-solution} in every connected component $\O_i$ of $B_1\setminus Z(u)$, with $i=1,\dots,2N$, we already know that
\be\label{e:boundary-singular}
w(x,y) = \overline{w}_i\left(\overline{u}(x,y), u(x,y)\right) \quad \mbox{in }\overline{\O}_i\cap B_1,
\ee
where $\overline{w}_i$ satisfies \eqref{e:weak-nuova-qc}. Therefore, by classical Schauder regularity we know that $u,\overline{u} \in C^{k,\alpha}_\loc(B_1)$. Then, by applying \cite[Theorem 1.1]{SirTerVit1}, the solutions $\overline{w}_i$ belong to $C^{\infty}(\overline{B_{1/2}^+})$. By composition, we deduce the validity of the claimed regularity on every connected component $\O_i\cap B_{1/2}$, i.e. $w\in C^{k,\alpha}(\overline{\O}_i\cap B_{1/2})$. Finally, being $w$ continuous in $B_1$, the regularity can be extended to the whole ball $B_{1/2}$ by applying the gluing lemma \cite[Lemma 2.12]{TerTorVit1}.\\

Ultimately, the boundary conditions \eqref{e:condizioni-bordo} follow directly by \cite[Theorem 1.1]{TerTorVit1} and \eqref{e:boundary-singular}. Indeed, in light of \eqref{e:matrix-theta} we have
$$
(A\nabla w\cdot \nabla w)(x,y) = (A\nabla u\cdot \nabla u)(x,y) (B\nabla \overline{w}_i\cdot \nabla \overline{w}_i)(\overline{u}(x,y),u(x,y)),\quad \mbox{for }(x,y)\in \overline{\O}_i\cap B_1,
$$
which implies that $w$ has vanishing gradient at the origin.
\end{proof}

\begin{Corollary}
    Let $n=2$, $k\in\mathbb N\setminus\{0\}$, $\alpha\in (0,1)$, $A\in\mathcal A_E$ with $A\in C^{k-1,\alpha}(B_1)$ and constant determinant. Let $u$ be $A$-harmonic in $B_1$ with $\mathcal V(x,u)\leq N$ for $x \in B_1$. Let $a>-\min\{1,2/N\}$ and $w$ be a continuous solution to \eqref{eqwa} in $B_1$. Then $w\in C^{k,\alpha}_\loc(B_1)$ with
    $$
A\nabla w\cdot\nabla u=0 \quad\mathrm{on \ } R(u)\cap B_1,\qquad \left|\nabla w\right|=0 \quad\mathrm{on \ } S(u)\cap B_1.
$$
Moreover, there exists $C>0$ depending only on $\lambda,\Lambda,k,\alpha,a,\|A\|_{C^{k-1,\alpha}(B_1)},u$ and $Z(u)$ such that
$$
\|w\|_{C^{k,\alpha}(B_{1/2})}\leq C\|w\|_{L^\infty(B_1)}.
$$
\end{Corollary}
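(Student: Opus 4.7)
The strategy is to localize and reduce every point of $\overline{B_{1/2}}$ to one of three already-treated situations: an interior regular point where $u\neq 0$ (classical Schauder), a regular nodal point $x\in R(u)$ (higher order boundary Harnack for equations degenerating on a $C^{k,\alpha}$ hypersurface), or an isolated singular point $x\in S(u)$ (Theorem~\ref{teodim21}).

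First I would use the two-dimensional structure recalled in Section~\ref{s:measurable-holder}: because $A\in C^{k-1,\alpha}(B_1)$ with $k\geq 1$ (hence H\"older continuous) and $\mathcal{V}(x,u)\leq N$ on $B_1$, the singular set $S(u)$ is locally a finite collection of isolated points with uniformly bounded vanishing orders. In particular, $S(u)\cap\overline{B_{1/2}}=\{x_1,\dots,x_m\}$ is finite. Choose $r>0$ smaller than half the mutual distances between these points and of their distance to $\partial B_{3/4}$. Then $B_{2r}(x_i)\cap S(u)=\{x_i\}$ and $\mathcal{V}(x_i,u)=:N_i\leq N$, so the hypothesis $a>-\min\{1,2/N\}$ implies $a>-2/N_i$. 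Applying Theorem~\ref{teodim21} (after translating so that $x_i$ becomes the origin) in $B_{2r}(x_i)$ yields $w\in C^{k,\alpha}(\overline{B_r(x_i)})$ together with the two boundary conditions on $R(u)$ and $S(u)$, plus the estimate
\[
\|w\|_{C^{k,\alpha}(B_r(x_i))}\leq C_i\,\|w\|_{L^\infty(B_{2r}(x_i))},
\]
with $C_i$ depending on $\lambda,\Lambda,k,\alpha,a,\|A\|_{C^{k-1,\alpha}},u$ and $Z(u)\cap B_{2r}(x_i)$.

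Next I would cover the complementary compact set $K:=\overline{B_{1/2}}\setminus\bigcup_i B_r(x_i)$ by finitely many balls in which $u$ has only regular nodal points (or no nodal points). On any such ball either $u$ is bounded away from zero, in which case $|u|^a A$ is uniformly elliptic with $C^{k-1,\alpha}$ coefficients and $w\in C^{k,\alpha}$ by classical Schauder theory; or the ball meets only $R(u)$, which by elliptic regularity is locally the graph of a $C^{k,\alpha}$ function and $|\nabla u|$ is bounded below on it. In the latter case one can either invoke the higher order boundary Harnack on regular nodal domains proved in \cite{TerTorVit1}, or equivalently redo the hodograph transformation of Lemma~\ref{l:quasiconformal-solution}: the absence of critical points makes the map $\Theta=(\overline u,u)$ a $C^{k,\alpha}$ diffeomorphism straightening $R(u)$ to a hyperplane, the constant-determinant assumption still collapses the transformed matrix $B$ to the identity, and \cite[Theorem 1.1]{SirTerVit1} produces the $C^{k,\alpha}$ estimate for the transformed function. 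Pulling back gives $C^{k,\alpha}$ regularity and the two boundary conditions on each such ball.

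Extracting a finite subcover of $K$ and combining with the balls $B_r(x_i)$ produces a finite cover of $\overline{B_{1/2}}$ on each element of which $w$ is $C^{k,\alpha}$ with a controlled estimate, while the boundary condition $A\nabla w\cdot\nabla u=0$ on $R(u)$ holds pointwise from both sides, and $|\nabla w|=0$ at the singular points. The two boundary conditions match the regular/singular description globally. Finally, a standard gluing argument (as in \cite[Lemma 2.12]{TerTorVit1}) promotes the local $C^{k,\alpha}$ estimates to a global one on $B_{1/2}$, with constant depending on $\lambda,\Lambda,k,\alpha,a,\|A\|_{C^{k-1,\alpha}},u$ and $Z(u)$. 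The main technical point is checking that the hodograph construction of Section~\ref{sec:hodo} still functions at regular nodal points (a genuinely simpler case than singular ones, so no real obstacle), while the principal mechanism is the fine 2D Theorem~\ref{teodim21} which handles the singular geometry.
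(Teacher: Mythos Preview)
Your proof is correct and follows essentially the same approach as the paper: a trichotomy of points (away from $Z(u)$, on $R(u)$, on $S(u)$), local $C^{k,\alpha}$ regularity via classical Schauder, \cite[Theorem 1.1 and Lemma 2.12]{TerTorVit1}, and Theorem~\ref{teodim21} respectively, then a finite covering of $\overline{B_{1/2}}$. The paper's version is slightly terser and does not separately enumerate the singular points, but the argument is the same.
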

\begin{proof}
    Any point $x$ in the ball $B_{1}$ either
$$
\mathrm{(i)}\,\,\, x \in B_1\setminus Z(u),\qquad
\mathrm{(ii)}\,\,\, x \in B_1\cap R(u),\qquad
\mathrm{(iii)}\,\,\, x \in B_1\cap S(u).
$$
Then, there exists a small radius $r_x>0$ such that either $B_{r_x}(x)\cap Z(u)=\emptyset$ in case (i), $B_{r_x}(x)\cap S(u)=\emptyset$ in case (ii), or $B_{r_x}(x)\cap S(u)=\{x\}$ in case (iii). We claim that in any case $w\in C^{k,\alpha}_\loc(B_{r_x}(x))$. The latter regularity relies in classical Schauder theory for second order uniformly elliptic equations in case (i), \cite[Theorem 1.1 and Lemma 2.12]{TerTorVit1} in case (ii) (being $a>-1$ in particular), and Theorem \ref{teodim21} in case (iii) (being $\mathrm{det}A$ constant). Hence, by compactness one can extract a finite covering of $\overline{B_{r}}$ (for any $0<r<1$) with the same property, obtaining the desired regularity together with an estimate depending on $u,Z(u)$.
\end{proof}
We remark that the result above holds true having the determinant of the coefficiets constant locally around the singularities of $u$. However, this requirement is a bit recursive since the behaviour of the $A$-harmonic function $u$ - and the structure of its nodal set - does depend on the values of the coefficients $A$.

\subsection{The regularization-approximation scheme around singular points}\label{s:appr}
The aim of this section, is the construction of the regularization-approximation scheme which will ultimately lead to establishing the following result.

\begin{Theorem}\label{t:effective}
Let $n=2, a\geq 0, A\in \mathcal{A}$ and $ u\in H^1(B_1)$ be a solution to \eqref{equv} such that
$$
S(u)\cap B_1 = \{0\},\quad \mbox{with}\quad\mathcal{V}(0,u)=N\geq2.
$$
Given $w$ a continuous solution to \eqref{eqwa} in the sense of Definition \ref{definition.energy.a}, then $w \in C^{1,1-}_\loc(B_1)$ and
$$
A\nabla w\cdot\nabla u=0 \quad\mathrm{on \ } R(u)\cap B_1,\qquad \left|\nabla w\right|=0 \quad\mathrm{on \ } S(u)\cap B_1.
$$
\end{Theorem}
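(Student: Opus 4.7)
The plan is to implement the regularization-approximation scheme sketched right after the statement of Theorem \ref{effective2}, localizing to a neighborhood of the single singular point $0\in S(u)$.

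\emph{Step (ii), construction of $(A_\varepsilon,u_\varepsilon)$.} I would regularize $A$ so that its determinant is constant in a small ball around $0$. A concrete choice: with a smooth cutoff $\chi_\varepsilon$ equal to $1$ in $B_\varepsilon$ and vanishing outside $B_{2\varepsilon}$, set
\[
A_\varepsilon(x) := \chi_\varepsilon(x)\,\frac{A(x)}{\sqrt{\det A(x)}} + (1-\chi_\varepsilon(x))\,A(x).
\]
Since $A(0)=\mathbb{I}$ and $A\in \mathcal{A}$, one has $\det A(x)=1+O(|x|)$, so $A/\sqrt{\det A}$ differs from $A$ by $O(\varepsilon)$ on the support of $\nabla \chi_\varepsilon$; this absorbs the $1/\varepsilon$ loss coming from the cutoff gradient and yields $A_\varepsilon\in \mathcal{A}$ uniformly in $\varepsilon$, with $\det A_\varepsilon \equiv 1$ in $B_\varepsilon$. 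Define $u_\varepsilon$ as the $A_\varepsilon$-harmonic function in $B_1$ with $u_\varepsilon=u$ on $\partial B_1$. By classical Schauder stability, $u_\varepsilon\to u$ in $C^{1,\alpha}_\loc(B_1)$; combining this with the expansion \eqref{taylor} of $u$ at $0$ and the fact that $A-A_\varepsilon$ is supported in $B_{2\varepsilon}$, one shows that the normalized rescalings of $u_\varepsilon$ centered at $0$ converge to the same $N$-homogeneous tangent map $P$ as those of $u$. In particular, $0$ remains the unique singular point of $u_\varepsilon$ with $\mathcal{V}(0,u_\varepsilon)=N$ for $\varepsilon$ small, $Z(u_\varepsilon)\to Z(u)$ locally in Hausdorff distance, and the entire family $(u_\varepsilon)$ lies in a common class $\mathcal{S}_{N_0}$.

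\emph{Steps (iii)-(iv), approximating $w$ and uniform estimates.} Solve $\mathrm{div}(|u_\varepsilon|^a A_\varepsilon\nabla w_\varepsilon)=0$ in $B_1$ with $w_\varepsilon=w$ on $\partial B_1$, in the sense of Definition \ref{definition.energy.a}. A Caccioppoli inequality in $H^1(B_1,|u_\varepsilon|^a)$ combined with the uniform boundedness of $\|w\|_{L^\infty}$ yields $w_\varepsilon\to w$ strongly in $H^1_\loc(|u|^a)$ and locally uniformly (using the continuity of $w_\varepsilon$ across $Z(u_\varepsilon)$ discussed in Section \ref{sec:holder} and the Hausdorff convergence above). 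Since $\det A_\varepsilon\equiv 1$ in $B_\varepsilon$, Theorem \ref{teodim21} gives $w_\varepsilon\in C^{1,\alpha}_\loc(B_{\varepsilon/2})$ for every $\alpha\in(0,1)$, and combining it with the higher order boundary Harnack of \cite{TerTorVit1} (applicable in $B_1\setminus B_{\varepsilon/2}$, where the nodal set of $u_\varepsilon$ is regular) provides a $C^{1,\alpha}$-bound on $w_\varepsilon$ in $B_{1/2}$, which a priori depends on $\varepsilon$. To upgrade this to a uniform-in-$\varepsilon$ estimate I would rerun the blow-up/contradiction scheme of Section \ref{s:section-proof-gradient2}: assuming the $C^{1,\alpha}$-seminorms of $\eta w_{\varepsilon_k}$ blow up along a subsequence $\varepsilon_k\to 0$, form blow-ups centered at points realizing the seminorm and argue as in Theorem \ref{uniformwa}(ii), invoking the hooking lemma (Section \ref{s:hooking}) and the fine interplay analysis of Sections \ref{s:inter}-\ref{s:almost-flat}. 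The uniform membership of the $u_{\varepsilon_k}$ in $\mathcal{S}_{N_0}$ ensures compactness of the blow-up sequences (Proposition \ref{p:blow-up}), and the blow-up limit of $w_{\varepsilon_k}$ has globally $\alpha$-H\"older continuous and non-constant gradient, contradicting the Liouville Theorem \ref{liou_a>-1}.

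Combining the resulting uniform bound with $w_\varepsilon\to w$, one concludes that $w\in C^{1,\alpha}_\loc(B_1)$ for every $\alpha\in(0,1)$, hence $w\in C^{1,1-}_\loc(B_1)$; the boundary identities on $R(u)$ and $S(u)$ follow by passing to the limit the corresponding identities for $w_\varepsilon$ guaranteed by Theorem \ref{teodim21}. The main obstacle I anticipate lies in Step (ii), namely in certifying that $\mathcal{V}(0,u_\varepsilon)=N$ and that the tangent map at $0$ is preserved in the limit. Vanishing orders are only upper semicontinuous under $H^1$-convergence in general, so this will require exploiting the Lipschitz regularity of $A$, the localized support of the perturbation $A-A_\varepsilon\subset B_{2\varepsilon}$, and a quantitative control of the Almgren frequency of $u_\varepsilon$ at small scales via the monotonicity formula, in order to keep the leading order profile of $u_\varepsilon$ at $0$ matched with that of $u$.
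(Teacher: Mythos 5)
Your proposal follows the announced five-step scheme, but the step you yourself flag as ``the main obstacle'' is precisely where the proof lives, and your construction does not resolve it. Defining $u_\varepsilon$ as the $A_\varepsilon$-harmonic function with Dirichlet data $u$ on $\partial B_1$ will in general destroy the singularity: writing $u_\varepsilon=u+\varphi_\varepsilon$, the corrector $\varphi_\varepsilon$ solves an inhomogeneous problem with right-hand side supported in $B_{2\varepsilon}$, and generically $\varphi_\varepsilon(0)\neq 0$, so $0\notin Z(u_\varepsilon)$ and the degenerate critical point of order $N$ disappears (or splits). No soft argument via upper semicontinuity of the frequency or the monotonicity formula can prevent this. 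The paper's resolution is constructive and nontrivial: Lemma \ref{l:killing-poly} produces $A_\varepsilon$-harmonic functions with a prescribed homogeneous leading polynomial at $0$, and Lemma \ref{l:sus} iteratively subtracts such functions from the corrector (at most $N$ times) until $\mathcal{V}(0,\varphi_\varepsilon)=N$, so that $u_\varepsilon=u+\varphi_\varepsilon$ is $A_\varepsilon$-harmonic \emph{and} keeps the singularity of exact order $N$ at the origin, with quantitative $C^{1,\alpha}$ smallness $\|\varphi_\varepsilon\|_{C^{1,\alpha}}\lesssim \varepsilon^{1-\alpha}$. Without this, every subsequent step (localizing Theorem \ref{teodim21} at the singular point, the uniform-in-$\varepsilon$ blow-up argument) has no configuration to apply to.

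Two further gaps. First, your Step (iii) solves the degenerate Dirichlet problem $\mathrm{div}(|u_\varepsilon|^a A_\varepsilon\nabla w_\varepsilon)=0$ globally in $B_1$; for $a\geq 1$ the nodal set decouples the equation and such solutions need not be continuous across $Z(u_\varepsilon)$, and proving $w_\varepsilon\to w$ locally uniformly with $\varepsilon$-dependent weights would essentially presuppose the equicontinuity you are trying to establish. The paper instead works on a single nodal component, straightens it with the quasiconformal hodograph map $\Theta$, and solves a mixed boundary value problem for $W_\varepsilon$ in the \emph{fixed} domain $\tilde\Omega=\Theta(\Omega)$ with uniformly elliptic coefficients $B_\varepsilon\to B$, so that $W_\varepsilon\to W$ follows from a clean energy identity. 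Second, the uniform-in-$\varepsilon$ a priori bound (Theorem \ref{uniformwa_sector}) is not a verbatim rerun of Section \ref{s:section-proof-gradient2}: since one works on a nodal component, the blow-up limits of $u_{\varepsilon_k}$ at moving centers must be shown to be either linear or translated $N$-homogeneous harmonic polynomials, so that the limit domain has connected boundary and the nodal-domain variant of Theorem \ref{liou_a>-1} (Remark \ref{rem:conn_bdry}) applies; this classification rests on the uniform Hartman--Wintner bound of Lemma \ref{lem:xi_eps} on $\xi_\varepsilon(z)=i\,\overline{\nabla u_\varepsilon(z)}/z^{N-1}$, which is absent from your outline.
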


Let $n=2, A\in \mathcal{A}$ and $u \in H^1(B_1)$ be a solution to \eqref{equv} satisfying
$$
S(u)\cap B_1= \{0\}
$$
with $\mathcal{V}(0,u)=N$. We proceed by constructing the approximating sequence for $u$. As we will see, the sequence is built in a way to have a prescribed vanishing at the singularity $0$, to solve equations with locally constant coefficients around $0$, and to have a particular convergence to the original $u$ (convergence of the sequence together with its blow-ups).

To begin with, given $\eps>0$ sufficiently small, we consider a cut-off function $\eta_\eps\in C^\infty_c(B_{2\eps},[0,1])$ such that
$$
\eta_\eps \equiv 1\quad\mbox{in }B_\eps,\quad \eta_\eps \equiv 0\quad\mbox{in }B_1\setminus B_{2\eps},\quad\mbox{and}\quad |\nabla \eta_\eps|\leq \frac{2}{\eps}\quad\mbox{in }B_{2\eps}\setminus B_\eps.
$$
Therefore, the approximating sequence of variable coefficients
\be\label{e:approximating-A}
A_\eps:= A + (\mathbb{Id}-A)\eta_\eps, \qquad
\mbox{is such that}\qquad A_\eps \equiv \mathbb{Id}\quad\mbox{in }B_\eps
\ee
and
\be\label{e:matrix-esti}
\begin{aligned}
\left[A_\eps\right]_{C^{0,1}(B_1)} &\leq C\left( \left[A\right]_{C^{0,1}(B_1)} +  \norm{\mathbb{Id}-A}{L^\infty(B_{2\eps})}\norm{\nabla \eta_\eps}{L^\infty(B_{2\eps})} +  \left[\mathbb{Id}-A\right]_{C^{0,1}(B_{2\eps})}\right) \leq C L,
\end{aligned}
\ee
for some $C>0$ universal (recall that $\left[A\right]_{C^{0,1}(B_1)}\leq L$). We stress that in the second inequality we exploited the Lipschitz continuity of the coefficients $A$ and the explicit gradient estimate of the cut-off function $\eta_\eps$. Finally, given $R_0>0,\eps_0>0$ to be chosen later, for $\eps \in (0,\eps_0)$ we aim to construct a solution $\varphi_\eps \in H^1(B_{R_0})\cap C^{1,1-}(B_{R_0})$ to the equation
\be\label{e:approx.varphi}
-\mathrm{div}(A_\eps \nabla \varphi_\eps ) = \mathrm{div}F_\eps \quad\mbox{in }B_{R_0},\quad\mbox{with}\quad F_\eps(z):=\eta_\eps(z) \left(\mathbb{Id}-A(z)\right)\nabla u(z),
\ee
and such that $\mathcal{V}(0,\varphi_\eps)=N$ and $\varphi_\eps\to0$ in $C^{1,\alpha}_\loc(B_{R_0})$, for any $\alpha\in(0,1)$. Indeed, by choosing as approximating pair
\be\label{e:coppia}
u_\eps :=u + \varphi_\eps,\qquad A_\eps := A + (\mathbb{I}-A)\eta_\eps,
\ee
we deduce the existence of a sequence of functions satisfying
\be\label{e:numerarla}
\mathrm{div}(A_\eps \nabla u_\eps) =0 \quad \mbox{in }B_{R_0},\qquad
\mathcal{V}(0,u_\eps)=N,\qquad \mbox{for }\eps \in (0,\eps_0),
\ee
and converging to $u$ uniformly in $C^{1,\alpha}_\loc(B_{R_0})$, for every $\alpha \in (0,1)$, and strongly in $H^1(B_{R_0})$. Ultimately, in order to prove Theorem \ref{t:effective} we need a stronger convergence of the approximating sequence $u_\varepsilon$ to $u$: more precisely, we need to show that the blow-up limit of $u_\varepsilon$ at $0$ converges in $C^{0,\alpha}_\loc$ to the blow-up limit of $u$ at $0$, for every $\alpha \in (0,1)$ (see Lemma \ref{lem:xi_eps}).\\

In order to achieve such a result, we start by introducing the following lemma that allows to construct $A_\eps$-harmonic functions with prescribed blow-up limit at singular points.
\begin{Lemma}\label{l:killing-poly}
Given $k\geq 2$, let $P_k$ be a $k$-homogeneous harmonic polynomial. Then, there exists $R_0>0, \eps_0>0$ such that the following holds true: for every $R \in (0,R_0),\eps\in (0,\eps_0)$ there exists $a_\eps^{k} \in \R$ such that the unique solution $\psi_\eps^{k}\in H^1(B_R)$ to
\be\label{e:homo}
\begin{cases}
\mathrm{div}(A_\eps \nabla \psi_\eps^{k}) =0 & \mbox{in }B_R,\\
\psi_\eps^{k} = a_\eps^{k} + P_k & \mbox{on }\partial B_R,
\end{cases}
\qquad \mbox{satisfies}\quad\mathcal{V}(0,\psi_\eps^{k})\geq k,
\ee
where $A_\eps$ is defined by \eqref{e:approximating-A}. Moreover, for every $r<R$ there exists $\Gamma_\eps$ such that
\be\label{e:taylor-exp}
        \psi_\eps^{k}(z) = P_k(z) + \Gamma_{\eps}^{k}(z),\qquad \mbox{in }B_r
\ee
with
        \be\label{e:taylor-exp-remainder}
        |\Gamma_{\eps}^{k}(z)| \leq C|z|^{k+\delta},\quad|\nabla\Gamma_\eps^{k}(z)| \leq C |z|^{k-1+\delta}
        \qquad\quad\mbox{in }B_r,
        \ee
        and with $C,\delta>0$ depending only on $\lambda, \Lambda,L, \eps_0, R_0$.
\end{Lemma}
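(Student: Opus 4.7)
The plan is to establish the lemma via a perturbative construction from the constant-coefficient case $A=\mathbb{Id}$ combined with a blow-up rigidity argument at the origin. First, by linearity of the Dirichlet problem I would write $\psi_\eps^{k,a} = a + \phi_\eps^k$, where $\phi_\eps^k \in H^1(B_R)$ is the unique weak solution with boundary data $P_k$; its existence and Schauder regularity follow from the uniform ellipticity of $A_\eps$ and the Lipschitz bound \eqref{e:matrix-esti}. Setting $\rho_\eps := \phi_\eps^k - P_k \in H^1_0(B_R)$ and using that $P_k$ is classically harmonic, one checks that $\rho_\eps$ satisfies
\begin{equation*}
\mathrm{div}(A_\eps \nabla \rho_\eps) = -\mathrm{div}\bigl((A-\mathbb{Id})(1-\eta_\eps)\nabla P_k\bigr)\qquad\text{in } B_R,
\end{equation*}
whose source is supported in $B_R\setminus B_\eps$ and pointwise bounded by $CL|z|^k$. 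Standard energy estimates combined with De Giorgi--Moser bounds then yield $\|\rho_\eps\|_{L^\infty(B_R)}\leq CLR^{k+1}$.

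Since $A_\eps\equiv\mathbb{Id}$ in $B_\eps$, the remainder $\rho_\eps$ is classically harmonic there and in two dimensions admits a spherical-harmonic expansion $\rho_\eps(z)=\sum_{j\geq 0}R_j^\eps(z)$ with $R_j^\eps$ a $j$-homogeneous harmonic polynomial. I would set $a_\eps^k := -R_0^\eps$ to kill the constant mode at the origin, so that $\psi_\eps^{k,a_\eps^k}(0)=0$ and hence $\mathcal V(0,\psi_\eps^{k,a_\eps^k})\geq 1$. The heart of the proof is to upgrade this to $\mathcal V(0,\psi_\eps^{k,a_\eps^k})= k$ and to identify the $k$-homogeneous part as $P_k$. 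The plan is to invoke the integer-valuedness of the vanishing order for $A_\eps$-harmonic functions via the Almgren monotonicity formula \eqref{e:Almgren}, together with a compactness argument based on Proposition \ref{p:blow-up}: rescale $\psi_\eps^{k,a_\eps^k}$ appropriately and pass to a subsequential blow-up harmonic polynomial, then use the quantitative $O(R^{k+1})$-smallness of $\rho_\eps$ relative to $P_k$ to force this blow-up to be, up to a positive multiplicative constant, equal to $P_k$ itself. The radii $R_0,\eps_0$ are produced precisely as those smallness thresholds which make this rigidity kick in.

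Once the vanishing behaviour is established, the Taylor expansion \eqref{e:taylor-exp}--\eqref{e:taylor-exp-remainder} splits into two pieces. Inside $B_\eps$, the harmonic remainder $\Gamma_\eps^k := \psi_\eps^{k,a_\eps^k}-P_k$ has vanishing order strictly greater than $k$, so that its spherical-harmonic expansion directly yields $|\Gamma_\eps^k(z)|\leq C|z|^{k+1}$ and $|\nabla\Gamma_\eps^k(z)|\leq C|z|^k$ there. In the annulus $B_r\setminus B_\eps$, Schauder-type estimates for $A_\eps$-harmonic functions, with the uniform Lipschitz control from \eqref{e:matrix-esti}, upgrade this to $|\Gamma_\eps^k(z)|\leq C|z|^{k+\delta}$ and $|\nabla\Gamma_\eps^k(z)|\leq C|z|^{k-1+\delta}$ for some $\delta>0$ depending only on $\lambda,\Lambda,L,R_0,\eps_0$.

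The main obstacle is precisely the rigidity step. A direct parameter count is unfavourable: killing all harmonic modes of degree strictly less than $k$ and matching the $k$-th mode to $P_k$ amounts to $2k+1$ independent conditions in two dimensions, while only the single real parameter $a_\eps^k$ is being adjusted. The argument must therefore rely on the discreteness of the vanishing order for $A_\eps$-harmonic functions together with the quantitative smallness $\|\rho_\eps\|_{L^\infty}=O(R^{k+1})$, which should force the blow-up profile of $\psi_\eps^{k,a_\eps^k}$ to jump, for $R$ and $\eps$ small enough, to the prescribed degree $k$ and to a multiple of the prescribed polynomial $P_k$.
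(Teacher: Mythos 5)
Your construction coincides with the paper's own: the same Dirichlet problem, the same choice of the constant ($a_\eps^k=-\phi_\eps^k(0)$; the paper sets $a_\eps=-\tilde\varphi_\eps(0)$), and the same two-region treatment of the expansion \eqref{e:taylor-exp}. Your direct energy/De Giorgi bound $\|\rho_\eps\|_{L^\infty(B_R)}\leq CLR^{k+1}$ is a clean, quantitative substitute for the compactness step by which the paper derives $|\psi_\eps-P_k|\leq\delta R^k$ in $B_{R/2}$ (see \eqref{e:cosa-al-limite}); that part is fine and even slightly sharper than what is in the paper.

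The gap is exactly where you locate it, and the mechanism you offer does not close it. Since $\psi_\eps^k$ is harmonic in $B_\eps$, its vanishing order at $0$ is the degree of the first nonvanishing term in the expansion of $P_k+\rho_\eps-\rho_\eps(0)$ there, and an arbitrarily small nonzero mode $R_j^\eps$ with $1\leq j\leq k-1$ gives $\mathcal V(0,\psi_\eps^k)=j<k$ no matter how small $\|\rho_\eps\|_{L^\infty(B_R)}$ is. Neither the integrality of the vanishing order nor Proposition \ref{p:blow-up} (which concerns blow-ups along radii $r\to0^+$ of a fixed function) converts information at the outer scale $R$ into information at scale $0^+$; ``smallness forces the blow-up to jump to degree $k$'' is precisely the non sequitur that your own parameter count warns against. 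Note moreover that the low modes have no reason to vanish identically: a formal first-order computation with, say, $A=(1+x_1)\mathbb{Id}$ and $P_2=x_1^2-x_2^2$ (whose source term for $\rho_\eps$ is $\approx -2x_1$ outside $B_{2\eps}$) produces $\nabla\rho_\eps(0)$ of order $R^2$, so the argument cannot consist in showing that they are zero; it must be an asymptotic statement in $R$ and $\eps$.

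The paper closes this step by the contradiction argument of its Step 2: assuming $\mathcal V(0,\psi_{\eps_i})\leq k-1$ along sequences $R_i,\eps_i\to0$, rescaling by $R_i^{-k}\psi_{\eps_i}(R_i\,\cdot)$, passing to the harmonic limit $P_k$ via uniform $C^{1,\alpha}$ bounds and uniqueness of the Poisson problem, and extracting a contradiction from the nondegeneracy of the $L^2$ mass at scale $R_i/2$ together with the Almgren monotonicity and the doubling estimate \eqref{tipo.doubling}. Your proposal needs to supply this argument (or an equivalent one) in full; as it stands, the central assertion $\mathcal V(0,\psi_\eps^k)\geq k$ — and with it \eqref{e:taylor-exp}--\eqref{e:taylor-exp-remainder}, which at $z=0$ already force $\nabla\psi_\eps^k(0)=\nabla P_k(0)=0$ — is not established. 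Since this transfer from the scale-$R_i$ information down to the vanishing order at the origin is also the most delicate point of the paper's own proof, you should write it out explicitly rather than leave it at ``should force''.
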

\begin{proof}
For simplicity of notation, through the proof we omit the dependence on the degree $k\geq2$. Then, $P$ is a $k$-homogeneous harmonic polynomial. We split the proof in two steps.\\

Step 1: existence of solution. We start by constructing $\psi_\eps$ and $a_\eps$ as above. Let $\tilde{\varphi}_\eps$ be the unique solution to
$$
\mathrm{div}(A_\eps \nabla \tilde{\varphi}_\eps) = 0 \quad\mbox{in }B_R,\qquad
\tilde{\varphi}_\eps = P  \quad\mbox{on }\partial B_R,
$$
and set $\psi_\eps(z) := \tilde{\varphi}_\eps(z) -  \tilde{\varphi}_\eps(0)$ and $a_\eps:= -\tilde{\varphi}_\eps(0)$. Then, $0 \in Z(\psi_\eps)$, and
$$
|a_\eps| \leq C R^k,
$$
where $C$ depends only on $\lambda, \Lambda$ and $L$. More precisely, by scaling invariant boundary regularity, there exists $C>0$ depending only on $\lambda, \Lambda, \alpha$ and $L$, such that
$$
\norm{\tilde{\varphi}_\eps}{L^\infty(B_R)} +
R\norm{|\nabla \tilde{\varphi}_\eps|}{L^\infty(B_R)} + R^{1+\alpha}[|\nabla \tilde{\varphi}_\eps|]_{C^{0,\alpha}(B_R)}\leq C R^k,
$$
for every $\alpha \in (0,1)$.
Naturally, this estimate implies
\be\label{schau.bou}
\norm{\psi_\eps}{L^\infty(B_R)} +
R\norm{|\nabla \psi_\eps|}{L^\infty(B_R)} + R^{1+\alpha}[|\nabla \psi_\eps|]_{C^{0,\alpha}(B_R)}\leq C R^k,
\ee
for some constant  $C>0$ depending only on $\lambda, \Lambda, \alpha$ and $L$.\\

Step 2: the validity of the condition on the vanishing order. The proof is concluded once we show that the function $\psi_\eps$, previously constructed, satisfies $\mathcal{V}(0,\psi_\eps)\geq k$ for $\eps$ and $R$ sufficiently small. Thus, we proceed by a contradiction argument: suppose that there exist two sequences $\eps_i, R_i$ such that
$$
\mathrm{div}(A_{\eps_i} \nabla \psi_{\eps_i}) = 0 \quad\mbox{in }B_{R_i},\qquad
\psi_{\eps_i} = a_{\eps_i} + P  \quad \mbox{on }\partial B_{R_i},
$$
and
\begin{equation}\label{e:absurdvanish}
    \mathcal{V}(0,\psi_{\eps_i})\leq k-1.
\end{equation}
Thus, if we set
$$
\psi_i(z) := \frac{1}{R_i^k}\psi_{\eps_i}(R_i z),\qquad A_i(z) := A_{\eps_i}(R_i z) = A(R_i z) + (\mathbb{Id}-A(R_i z))\eta_{\eps_i}(R_i z) ,
$$
we get that
$$
\mathrm{div}(A_{i} \nabla \psi_{i}) = 0\quad \mbox{in }B_1,\qquad
\psi_{i} = \frac{a_{i}}{R_i^k} + P \quad\mbox{on }\partial B_1.
$$
In light of \eqref{e:matrix-esti}, we already know that $A_i$ is equi-Lipschitz continuous in $B_1$ and it converges to the identity matrix uniformly on $B_1$. On the other hand, by rescaling \eqref{schau.bou}, we get that for every $\alpha \in (0,1),$ the sequence $\psi_i$ is uniformly bounded in $C^{1,\alpha}(\overline{B_1})$ and satisfies $\psi_i(0)=0$. Hence, it converges, up to a subsequences, to some $\psi_\infty \in C^{1,\alpha}(\overline{B_1})\cap H^1(B_1)$ solution to
$$
\Delta \psi_\infty = 0 \quad\mbox{in }B_1,\qquad \psi_\infty= a_\infty + P\quad\mbox{on }\partial B_1,
$$
where $a_\infty$ is the limit of the ratio $a_i/R_i^k$ along the converging subsequence. Clearly, by combining the uniqueness of solution for the Poisson problem with the fact that $\psi_\infty(0)=0$, we deduce that $a_\infty=0$ and $\psi_\infty \equiv P$. Finally, by uniform convergence, for every $\delta>0$ there exists $i_0>0$ such that
$$
|\psi_{\eps_i}(z)-P(z)|\leq \delta R_i^{k}\quad  \mbox{for }z\in B_{\frac{R_i}{2}}
$$
and so
$$
\left|\frac{2^{2k}}{R_i^{2k}}\intn_{B_{R_i/2}}\psi_{\eps_i}^2\,dz - \intn_{B_1}P^2\,dz \right|\leq   C\delta^2,
$$
for every $i\geq i_0$, where $C>0$ is universal. Hence, by choosing $\delta>0$ sufficiently small, we infer that $\mathcal{V}(0,\psi_{\eps_i})\geq  k$, in contradiction with the \eqref{e:absurdvanish}.\\
In light of the uniform convergence of the previous sequence, it is not restrictive to assume that $R_0,\eps_0$ are chosen in such way that
\be\label{e:cosa-al-limite}
|\psi_{\eps}(z)-P(z)|+ R|\nabla \psi_{\eps}(z)-\nabla P(z)|\leq \delta R^{k}\quad  \mbox{for }z \in B_{\frac{R}{2}}
\ee
for every $\eps<\eps_0$ and $R<R_0$. The proof is concluded once we notice that the validity of the Taylor expansion \eqref{e:taylor-exp}-\eqref{e:taylor-exp-remainder} is granted by \cite{Aless1} (see also \cite[Subsection 2.2]{TerTorVit2}). On the other hand, the fact that the leading term coincides with $P$ follows by \eqref{e:cosa-al-limite}.
\end{proof}
By iterating the previous lemma, we can construct solutions to \eqref{e:approx.varphi} with prescribed vanishing order at the origin.
\begin{Lemma}\label{l:sus}
For every $N\geq 2$, there exist $R_0>0, \eps_0>0$ such that the following holds true. For every $R \in (0,R_0),\eps\in (0,\eps_0)$ there exists a solution $\varphi_\eps\in H^1(B_R)$ to \eqref{e:approx.varphi} such that for every $\alpha \in (0,1)$
\be\label{e:sei-tu}
\norm{\varphi_\eps}{C^{1,\alpha}(B_{R})}\leq C \eps^{1-\alpha}\quad\mbox{and}\quad\mathcal{V}(0,\varphi_\eps)=N,
\ee
for some constant $C>0$ depending only on $\lambda,\Lambda, L, \eps_0, \alpha$ and $R_0$.
\end{Lemma}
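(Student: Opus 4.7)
The plan is to construct $\varphi_\eps$ in two stages, combining a Dirichlet resolution of the inhomogeneous equation with iterative corrections by $A_\eps$-harmonic functions produced via Lemma \ref{l:killing-poly}. First, I would define $\tilde\varphi_\eps\in H^1_0(B_R)$ as the unique weak solution of $-\mathrm{div}(A_\eps\nabla\tilde\varphi_\eps)=\mathrm{div}F_\eps$ with zero boundary data. Since $A(0)=\mathbb{I}$ and $A$ is Lipschitz, on $\mathrm{supp}\,F_\eps\subset B_{2\eps}$ one has $|\mathbb{I}-A(z)|\leq 2L\eps$, giving $\norm{F_\eps}{L^\infty(B_1)}\leq C\eps$. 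An explicit computation of $\nabla F_\eps$ in the transition annulus $B_{2\eps}\setminus B_\eps$ moreover shows that the Lipschitz seminorm $[F_\eps]_{C^{0,1}(B_1)}$ stays bounded uniformly in $\eps$, since the singular factor $|\nabla\eta_\eps|\sim 1/\eps$ is compensated by $|\mathbb{I}-A|\sim\eps$. Interpolation then yields $\norm{F_\eps}{C^{0,\alpha}(B_1)}\leq C\eps^{1-\alpha}$, and Schauder-type estimates for divergence form equations with uniformly Lipschitz coefficients (recall \eqref{e:matrix-esti}) give $\norm{\tilde\varphi_\eps}{C^{1,\alpha}(B_R)}\leq C\eps^{1-\alpha}$.

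For the second stage, note that $\mathrm{div}(A_\eps\nabla u)=\mathrm{div}F_\eps$ by direct computation, so $u+\tilde\varphi_\eps$ is $A_\eps$-harmonic in $B_R$; since $A_\eps\equiv\mathbb{I}$ in $B_\eps$, it is harmonic there and admits an expansion $\sum_{k\geq 0}Q_k(z)$ into homogeneous harmonic polynomials. I would iteratively kill the low-order components by setting $u_\eps^{(0)}:=u+\tilde\varphi_\eps$ and, for $k=0,1,\dots,N-1$, invoking Lemma \ref{l:killing-poly} (or its immediate analogue for $k=0,1$) to produce an $A_\eps$-harmonic corrector $\psi_\eps^{k}$ whose leading $k$-homogeneous harmonic polynomial matches the current component $Q_k^{(k)}$ of $u_\eps^{(k)}$, and setting $u_\eps^{(k+1)}:=u_\eps^{(k)}-\psi_\eps^{k}$. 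The resulting $u_\eps:=u_\eps^{(N)}$ is $A_\eps$-harmonic with $\mathcal{V}(0,u_\eps)\geq N$; defining $\varphi_\eps:=u_\eps-u$ then gives a solution of \eqref{e:approx.varphi}, while $\mathcal{V}(0,\varphi_\eps)=N$ follows because the leading $N$-homogeneous harmonic polynomial of $u_\eps$ at the origin generically differs from that of $u$ coming from the expansion \eqref{taylor}.

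The hard part is controlling the $C^{1,\alpha}$-norm of the correctors $\psi_\eps^{k}$: a naive estimate on $Q_k^{(k)}$ from the $L^\infty$-bound of $u+\tilde\varphi_\eps$ on $B_\eps$ gives sizes that worsen with $k$. To overcome this one must exploit both the vanishing of $u$ to order $N$ at the origin (so that $|u(z)|\leq C|z|^N$ by \eqref{taylor}) and a refined pointwise bound $\norm{\tilde\varphi_\eps}{L^\infty(B_\eps)}\leq C\eps^2$ obtained from the Green function representation of $\tilde\varphi_\eps$ and the concentration of $F_\eps$ in $B_{2\eps}$ (in two dimensions contributing a further factor $|B_{2\eps}|\sim\eps^2$). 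Combined with the sharp scaling \eqref{schau.bou} for the correctors $\psi_\eps^{k}$, an inductive argument along the iteration exploiting that $u_\eps^{(k)}$ vanishes to order $\geq k$ at $0$ after the $k$-th step, and a judicious choice of $R_0$ small depending only on $N$, yield $\norm{\psi_\eps^{k}}{C^{1,\alpha}(B_R)}\leq C\eps^{1-\alpha}$ uniformly for $k=0,\dots,N-1$; summing gives $\norm{\varphi_\eps}{C^{1,\alpha}(B_R)}\leq C\eps^{1-\alpha}$ with a constant $C$ depending only on $\lambda,\Lambda,L,\eps_0,\alpha$ and $R_0$, as required.
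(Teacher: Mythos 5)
Your overall architecture (solve the inhomogeneous Dirichlet problem, then iteratively raise the vanishing order at the origin by subtracting $A_\eps$-harmonic correctors produced by Lemma \ref{l:killing-poly}, and close with boundary Schauder estimates using $\norm{F_\eps}{C^{0,\alpha}}\leq C\eps^{1-\alpha}$) is the same as the paper's. The genuine gap is in the quantitative control of the correctors, which you yourself flag as ``the hard part'' but do not actually close. You read off the homogeneous harmonic components $Q_k$ of $u+\tilde\varphi_\eps$ inside $B_\eps$, where that function is exactly harmonic; but a degree-$k$ homogeneous polynomial bounded by $M$ on $B_\eps$ is only bounded by $M(R/\eps)^k$ on $B_R$. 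Your refined bound $\norm{\tilde\varphi_\eps}{L^\infty(B_\eps)}\leq C\eps^2$ (together with $\norm{u}{L^\infty(B_\eps)}\leq C\eps^N$) gives $\norm{Q_k}{L^\infty(B_\eps)}\leq C\eps^2$ at best, hence $\norm{Q_k}{L^\infty(B_R)}\leq CR^k\eps^{2-k}$, which is not even bounded for $k\geq 3$, i.e.\ for $N\geq 4$. The appeal to ``an inductive argument exploiting that $u_\eps^{(k)}$ vanishes to order $\geq k$'' does not repair this: the components of the function at scale $\eps$ are still only $O(\eps^2)$, and the loss $(R/\eps)^k$ persists. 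The paper avoids the scale mismatch entirely by never descending to scale $\eps$: it identifies the leading term of each iterate $\phi^i_\eps$ at the macroscopic scale $R$, via a Moser-plus-doubling estimate (see \eqref{e:moser+doub}) which yields $|\phi^1_\eps(z)|/|z|^k\leq C\eps\norm{|\nabla u|}{L^\infty(B_R)}R^{1-k}$ on $B_{R/2}$, i.e.\ a bound on the leading polynomial that is linear in $\eps$ and uniform in the degree. That estimate is the key quantitative input your proposal is missing.

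Two smaller points. First, your claim that $[F_\eps]_{C^{0,1}(B_1)}$ is uniformly bounded is not justified: differentiating $F_\eps=\eta_\eps(\mathbb I-A)\nabla u$ produces the term $\eta_\eps(\mathbb I-A)D^2u$, and $u$ is only $C^{1,1-}\cap H^2_\loc$, so $D^2u$ need not be bounded; the correct (and sufficient) statement is $\norm{F_\eps}{C^{0,\alpha}(B_1)}\leq C\eps^{1-\alpha}$, obtained by estimating the product seminorm directly with $[\eta_\eps]_{C^{0,\alpha}}\leq C\eps^{-\alpha}$ and $\norm{\mathbb I-A}{L^\infty(B_{2\eps})}\leq C\eps$. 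Second, ``the leading $N$-homogeneous polynomial of $u_\eps$ generically differs from that of $u$'' is not a proof of $\mathcal{V}(0,\varphi_\eps)=N$ (the lemma requires it for every admissible $\eps$, not generically); the upper bound $\mathcal{V}(0,\varphi_\eps)\leq N$ should instead come, as in the paper, from the fact that $\nabla u$ (vanishing to order exactly $N-1$) sits in the right-hand side $F_\eps$ of \eqref{e:approx.varphi}, which caps the vanishing order of any solution by a blow-up/unique-continuation argument.
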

\begin{proof}
Let $R_0$ and $\eps_0$ be two fixed positive constants, which will be specified later. We proceed by  constructing the claimed function with an iterative argument.\\

Let $R<R_0, \eps < \eps_0$ and $\phi^0_\eps$ be the unique solution to
\be\label{e:sol.zero}
-\mathrm{div}(A_\eps \nabla \phi^0_\eps) = \mathrm{div}F_\eps\quad\mbox{in }B_R,\qquad
\phi^0_\eps = 0  \quad\mbox{on }\partial B_R.
\ee
Then, set $\phi_\eps^1(z):= \phi^0_\eps(z)- \phi^0_\eps(0)$, which vanishes at $z=0$ and still satisfies \eqref{e:approx.varphi}.
Notice that $$
\begin{aligned}
|\phi_\eps^1(0)| &
\leq C R\norm{\mathbb{Id}-A(z)}{L^\infty(B_{2\eps})}\norm{|\nabla u|}{L^\infty(B_{2\eps})}\\
&\leq C R\eps  \norm{|\nabla u|}{L^\infty(B_{2\eps})},
\end{aligned}
$$
where $C$ depends only on $\lambda, \Lambda$ and $L$. Let $k:=\mathcal{V}(0,\phi_\eps^1)$, then $k\leq N$. Indeed, by adapting a classical blow-up argument, one can show that not only the solutions of \eqref{e:sol.zero} satisfy the strong unique continuation principle, but also that the presence of $\nabla u$ in the right hand side $F_\eps$ (see \eqref{e:approx.varphi}) implies that the vanishing order of $\phi_\eps^1$ cannot exceed that of $u$. Then, we have two possibilities:
\begin{enumerate}
    \item either $k=N,$ and the thesis follows by taking $\varphi_\eps :=\phi_\eps^1;$
    \item or $k\leq N-1$ and there exists a $k$-homogeneous harmonic polynomial $P_k$ such that a Taylor expansion of the form \eqref{e:taylor-exp} holds true for $\phi^1_\eps$, with $P_k$ as leading term.
\end{enumerate}
We proceed then by considering the latter case. By combining the local boundedness of solution via Moser iteration and the doubling estimate \eqref{doub}, we get
\be\label{e:moser+doub}
\frac{|\phi^1_\eps(z)|}{|z|^k}\leq
C \eps \frac{\norm{|\nabla u|}{L^\infty(B_{R})}}{R^{k-1}}, \qquad \mbox{for }z \in B_{R/2},
\ee
with $C>0$ depending only on $\lambda, \Lambda,\eps_0$ and $L$. Therefore, up to perform a blow-up analysis, \eqref{e:moser+doub} implies a bound in $L^\infty_\loc(\R^2)$, in which the dependence from the approximating parameter $\eps\in (0,\eps_0)$ is explicit.\\
Now, by Lemma \ref{l:killing-poly}, we can construct a $A_\eps$-harmonic function whose leading term in the Taylor expansion coincides with the one of $\phi^1_\eps$. Indeed, there exist $R_0,\eps_0>0$, depending on $k$, such that for every $R<R_0$ and $\eps<\eps_0$ we can construct $\psi^k_\eps \in H^1(B_R)$ solving \eqref{e:homo} and satisfying an expansion in which the leading term coincides with the one of $\phi^1_\eps$. Namely, for every $r<R$, there exists $\Gamma_\eps$ such that
$$
   \psi_\eps^{k}(z) = P_k(z) + \Gamma_{\eps}(z),\qquad \mbox{in }B_r
$$
with $\Gamma_{\eps}$ satisfying \eqref{e:taylor-exp-remainder}. Then, if we consider $\phi^2_\eps(z):= \phi_\eps^1(z)-\psi^k_\eps(z)$, we have constructed a new solution to \eqref{e:approx.varphi} vanishing at the origin and such that
$$
|\phi_\eps^2|\leq C|z|^{N+\delta}\quad \mbox{in }B_{R/2},
$$
that is $\mathcal{V}(0,\varphi_\eps^2)\geq k+1$. Therefore, up to apply Lemma \ref{l:killing-poly} a finite number of times (at most $N$ iterations), we can always increase the vanishing order of $\phi^i_\eps$ by subtracting a $A_\eps$-harmonic function with a prescribed blow-up limit at the origin, up to obtain a  function with vanishing order greater or equal to $N$. We stress that although each iteration of the argument may lead to a change in the values of $R_0$ and $\eps_0$ required to apply Lemma \ref{l:killing-poly}, the statement of Lemma \ref{l:sus} remains true by choosing the minimum of these two finite collections of values as parameters. Therefore, after a finite number of iterations, we have constructed a solution $\varphi_\eps^{N}\in H^1(B_R)$ to \eqref{e:approx.varphi}, satisfying
$$
\varphi_\eps = \sum_{k_i\leq N} \left(a^{k_i}_\eps + P_{k_i}\right) =: P_\eps\quad \mbox{on }\partial B_{R_0},
$$
where $a_\varepsilon^{k_i}$ are the constants of Lemma \ref{l:killing-poly}, and therefore $P_\varepsilon$ is a harmonic polynomial of degree at most $N$, such that $\norm{P_\varepsilon}{L^\infty(B_{R_0})}\leq C \eps$, with $C>0$ depending only on $\lambda,\Lambda, L$ and $\eps_0$. Thus, by boundary Schauder estimates, we get that for $\eps<\eps_0$ and $\alpha \in (0,1)$
$$
\norm{\varphi}{C^{1,\alpha}(B_{R_0})}\leq C\left(\norm{F_\eps}{C^{0,\alpha}(B_{R_0})} + \norm{P_\eps}{C^{1,\alpha}(B_{R_0})}\right)\leq C\eps^{1-\alpha}
$$
with $C>0$ depending on $\lambda,\Lambda,L,\eps_0,\alpha$ and $R_0$. Ultimately, it implies the validity of the $C^{1,\alpha}$-estimate in \eqref{e:sei-tu}.
\end{proof}
We would like to emphasize that, in order to apply the blow-up procedure introduced in the proof of Theorem \ref{uniformwa} part (ii), it is necessary to establish also uniform convergence of the blow-up limits of the approximations $u_\eps$ at the fixed singularity. This result is purely two dimensional and relies on a refined version of the strategy first introduced by Hartman and Wintner in \cite{HartWint} and already used in the previous paper \cite{TerTorVit1}.
\begin{Lemma}\label{lem:xi_eps}
Let $R_0>0, \eps_0>0$ be the constants of Lemma \ref{l:sus}
and $u_\eps, A_\eps$ be the approximating pair defined in \eqref{e:coppia}. Then, for $\eps\in (0,\eps_0)$, if we set
         \be\label{e:hopeless}
         {\xi}_\eps(z):=\frac{i\overline{\nabla u_\eps(z)}}{z^{N-1}},\qquad \mbox{we have }\quad|{\xi}_\eps(0)|\geq C_1>0,\quad \norm{{\xi}_\eps}{C^{0,\omega}(B_{R_1})}\leq C_2,
         \ee
         where $\omega(t):=t |\log t|$ and $C_1,C_2, R_1>0$ depend only on $\lambda, \Lambda, L, \eps_0$ and $R_0$.
\end{Lemma}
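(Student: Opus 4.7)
The strategy is to apply a uniform-in-$\eps$ version of the Hartman--Wintner argument (see \cite[Lemma 3.6]{TerTorVit1} and \cite{HartWint}), exploiting the specific structure of the approximation scheme \eqref{e:coppia} in the region where $A_\eps$ coincides with the identity.

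First I would unpack the structure of $\xi_\eps$ inside $B_\eps$: since $A_\eps \equiv \mathbb{I}$ on $B_\eps$ by \eqref{e:approximating-A}, the function $u_\eps$ is \emph{classically} harmonic in $B_\eps$, and together with $\mathcal{V}(0,u_\eps) = N \geq 2$ and the simple connectedness of $B_\eps$ this produces a holomorphic function $F_\eps = u_\eps + iv_\eps$ on $B_\eps$ with a zero of order exactly $N$ at the origin, whose Taylor series reads $F_\eps(z) = \sum_{k \geq N} c_k^{(\eps)} z^k$, $c_N^{(\eps)} \neq 0$. The Cauchy--Riemann equations identify $\overline{\nabla u_\eps} = F_\eps'$ on $B_\eps$, so $\xi_\eps(z) = iF_\eps'(z)/z^{N-1}$ extends holomorphically across the origin with $\xi_\eps(0) = iNc_N^{(\eps)}$.

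For the modulus-of-continuity estimate, I would apply the Hartman--Wintner construction to the pair $(u_\eps, A_\eps)$ on $B_{R_0}$. The key point is that \eqref{e:matrix-esti} gives the uniform bound $[A_\eps]_{C^{0,1}(B_1)} \leq CL$ independent of $\eps$, while the ellipticity constants $\lambda,\Lambda$ and the vanishing order $\mathcal V(0,u_\eps)=N$ are independent of $\eps$ as well. Reproducing the argument of \cite[Lemma 3.6]{TerTorVit1} via the $A_\eps$-harmonic conjugate $\overline{u}_\eps$ (as in \eqref{e:A-ham-conjugate}) and the associated Beltrami/quasiconformal framework, the Lipschitz regularity of $A_\eps$ yields a Dini--H\"older estimate $\|\xi_\eps\|_{C^{0,\omega}(B_{R_1})} \leq C_2$ with $\omega(t) = t|\log t|$ and constants $R_1, C_2$ depending only on $\lambda, \Lambda, L, \eps_0, R_0$.

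For the lower bound $|\xi_\eps(0)| \geq C_1 > 0$, I would propagate the non-degeneracy of $u$ at $0$ to $\xi_\eps(0)$ using the modulus-of-continuity estimate just established. By \eqref{taylor} with $A(0)=\mathbb{I}$ one has $u(z) = \mathrm{Re}(c_u z^N) + \Gamma_0(z)$ with $c_u \neq 0$ and $|\nabla \Gamma_0(z)| \leq C|z|^{N-1+\delta}$, which gives $i\overline{\nabla u(z_0)}/z_0^{N-1} = iNc_u + O(r_0^\delta)$ at any point $|z_0|=r_0$. The $C^{1,\alpha}$ estimate from Lemma \ref{l:sus} provides $|\xi_\eps(z_0) - i\overline{\nabla u(z_0)}/z_0^{N-1}| \leq C\eps^{1-\alpha}/r_0^{N-1}$, while the Dini--H\"older bound yields $|\xi_\eps(0) - \xi_\eps(z_0)| \leq C_2 r_0|\log r_0|$. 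Choosing first $r_0$ small enough (depending only on $|c_u|, C_2, \delta$) and then $\eps_0$ small enough produces $|\xi_\eps(0)| \geq N|c_u|/2 =: C_1>0$.

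The hard part will be controlling the Hartman--Wintner estimate \emph{uniformly in} $\eps$: the coefficients $A_\eps$ depend on $\eps$ and are patched with the identity on the shrinking ball $B_\eps$, so one must verify that the Beltrami/quasiconformal machinery produces bounds depending only on the uniform Lipschitz and ellipticity constants, not on the geometry of the patching region. This is precisely where the first paragraph becomes useful: $\xi_\eps$ is genuinely holomorphic inside $B_\eps$, hence automatically Dini-continuous there, so the modulus-of-continuity estimate reduces to controlling $\xi_\eps$ outside $B_\eps$ (where $[A_\eps]_{C^{0,1}}\leq CL$ uniformly) and matching the two regions across $\partial B_\eps$ via the maximum principle for holomorphic functions.
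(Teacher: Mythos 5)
Your core strategy coincides with the paper's: run the Hartman--Wintner construction for the pair $(u_\eps,A_\eps)$ through the quasiconformal diffeomorphism $\Theta_\eps$, and observe that the uniform bound \eqref{e:matrix-esti} makes every constant in that construction independent of $\eps$. Two remarks on where you diverge. First, your final paragraph identifies a ``hard part'' that is not actually there: since $[A_\eps]_{C^{0,1}(B_1)}\leq CL$ uniformly on the \emph{whole} ball, the paper never splits $B_{R_0}$ into $B_\eps$ and its complement. It works globally, writing $-\mathrm{div}(A_\eps/\sqrt{\det A_\eps}\,\nabla u_\eps)=G_\eps\cdot\nabla u_\eps$ with $\|G_\eps\|_{L^\infty}$ uniformly bounded, pulling back by $\Theta_\eps$, and then running the Cauchy-type representation formula for $\tilde\xi_\eps$ on a ball $B_R$ of radius fixed independently of $\eps$; the uniformity comes from choosing $R$ so that the singular integral contributes at most $\pi\sup_{B_{R/2}}|\tilde\xi_\eps|$, which is then absorbed into the left-hand side. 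Your proposed ``matching across $\partial B_\eps$ via the maximum principle for holomorphic functions'' is both unnecessary and not really available ($\xi_\eps$ is holomorphic only inside $B_\eps$, so there is nothing holomorphic to match it to on the outside); if you rely on it as the actual mechanism for uniformity, that step is a gap, whereas deferring instead to a uniform rerun of \cite[Lemma 3.6]{TerTorVit1} (as in your second paragraph) is exactly what the paper does. Second, your propagation argument for the lower bound $|\xi_\eps(0)|\geq C_1$ — combining the Taylor expansion \eqref{taylor} of $u$, the $C^{1,\alpha}$ smallness of $\varphi_\eps$ from Lemma \ref{l:sus}, and the log-Lipschitz modulus — is a genuine addition: the paper leaves this point essentially implicit, and your argument supplies it cleanly, at the cost of possibly shrinking $\eps_0$ once more after fixing $r_0$ (harmless here, but worth stating).
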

\begin{proof}
We stress that, in light of Steps 1-2 of the proof of \cite[Lemma 3.6]{TerTorVit1}, we already know that for every $\eps\in (0,\eps_0)$ the statement in \eqref{e:hopeless} holds true with $C_1,C_2>0$, possibly depending on $\eps$. Therefore, in the proof, we limit to showing that this is not the case and that the constants can be chosen uniformly in $\eps$.\\
Before stating the result we set few notations: let $z = r e^{i\theta}$, where $r=|z|$ and $\theta=\mathrm{Arg}(z)\in[0,2\pi)$. Given $u_\eps$ and $A_\eps$ as before, it is possible to rewrite \eqref{e:numerarla} as
$$
-\mathrm{div}\left( \frac{A_\eps}{\sqrt{\mathrm{det}A_\eps}} \nabla u_\eps\right) = G_\eps \cdot \nabla u_\eps,\quad\mbox{with}\quad G_\eps:=A_\eps \nabla \left(\frac{\sqrt{\mathrm{det}A_\eps}-1}{\sqrt{\mathrm{det}A_\eps}}\right).
$$
Moreover, by Lemma \ref{l:sus}, we already know that for every $\alpha \in (0,1)$, there exists $C>0$ depending only on $u, \lambda,\Lambda,L, \eps_0$ and $R_0$ such that
\be\label{e:unif-2326}
\norm{u_\eps}{C^{1,\alpha}(B_{R})}\leq C
\ee
for every $R\in (0,R_0)$ and $\eps\in (0,\eps_0)$. Notice also that by \eqref{e:approximating-A} and \eqref{e:matrix-esti}, we get $G_\eps \in L^\infty(B_1)$ with a norm uniformly bounded for $\eps \in (0,\eps_0).$
Then, consider $\Theta_\eps$ the quasiconformal $C^{1,1-}$-diffeomorphism associated to $A_\eps$ already introduced by the authors in Step 1 of \cite[Lemma 3.6]{TerTorVit1}. Moreover, by the uniform bound \eqref{e:matrix-esti}, we infer that for every $\alpha \in (0,1)$ the $C^{1,\alpha}$-norm of $\Theta_\eps$ is uniformly bounded for $\eps \in (0,\eps_0)$ and, having a nonzero determinant of the Jacobian matrix, it is invertible in a neighborhood of the origin. Thus, the function
$$
\tilde{u}_\eps := u \circ \Theta_\eps^{-1}\colon B_{C R_0}\to \R
$$
is well defined, where $C$ is a constant depending only on $\lambda, \Lambda, L, \eps_0$ and $R_0$. Moreover, by \eqref{e:unif-2326}, for every $\alpha \in (0,1)$, there exists $C>0$ depending only on $u, \lambda,\Lambda,L, \eps_0$ and $R_0$ such that
\be\label{e:2329}
\norm{\tilde{u}_\eps}{C^{1,\alpha}(B_{R})}\leq C
\ee
for every $R\in (0,C R_0),\eps \in (0,\eps_0)$. First, by exploiting the definition of $\Theta_\eps$ and the fact that is defined in such way that $\Theta_\eps(0)=0$, then it is straightforward to deduce that for every $\phi \in C^\infty_c(B_{C R_0})$
$$
\int_{B_{C R_0}}\nabla \tilde{u}_\eps \cdot \nabla \phi \,dxdy=
\int_{B_{C R_0}}\phi ( \tilde{G}_\eps \cdot \nabla \tilde{u}_\eps) \,dxdy,\qquad \mbox{with }\tilde{G}_\eps := G_\eps \circ \Theta_\eps^{-1}.
$$
Clearly, we have $\tilde{G}_\eps \in L^\infty(B_{C R_0})$ with norm uniformly bounded for $\eps\in (0,\eps_0)$. Now, being $\Theta_\eps^{-1}$ a diffeomorphism, we get $\mathcal{V}(0,\tilde{u}_\eps)= N\geq 2$, and so
$$
\tilde{u}_\eps(z)= O(|z|^N),\,\,\,\quad |\nabla \tilde{u}_\eps(z)|=O(|z|^{N-1})
\qquad\mbox{as }|z|\to 0^+.
$$
Now, set $\tilde{\xi}_\eps(z):=i\overline{\nabla \tilde{u}_\eps(z)}/z^{N-1} \in L^\infty(B_{C R_0})$, and proceed with two steps.\\

Step 1: $\tilde{\xi}_\eps$ is bounded uniformly-in-$\eps$. By \cite[Section 7]{HartWint}, the following Cauchy formula holds true
\begin{equation}\label{e:cauchy}
\begin{aligned}
2\pi\tilde{\xi}_\eps(\zeta) &=i\int_{\partial B_R}\frac{\overline{\nabla \tilde{u}_\eps(z)}}{z^{N-1}(z-\zeta)}\, dz \ - \ \int_{B_R}\frac{\Delta \tilde{u}_\eps(z)}{z^{N-1}(z-\zeta)}\, dz\\ &=
i\int_{\partial B_R}\frac{\overline{\nabla \tilde{u}_\eps(z)}}{z^{N-1}(z-\zeta)}\, dz \ + \ \int_{B_R}\frac{\tilde{G}_\eps(z) \cdot \nabla \tilde{u}_\eps(z)}{z^{N-1}(z-\zeta)}\, dz,
\end{aligned}
\end{equation}
where $R\in (0,C R_0)$ is fixed and $\zeta$ belongs to a small neighbourhood of the origin. In particular, let $\rho>0$ be such that
$$
\lVert\tilde{G}_ \eps\lVert_{L^\infty(B_{C R_0})}\int_{B_\rho}\frac{dz}{|z-\zeta|}\leq \pi,
$$
and set $R:=\min\{\rho, C R_0/2\}$. Then, by the definition of $\rho>0$, for every $\zeta \in B_{R/2}$ we have
\begin{align*}
\int_{B_{R/2}}\frac{|\tilde{G}_\eps(z)||\nabla \tilde{u}_\eps(z)|}{|z-\zeta||z|^{N-1}}\, dz &\leq \pi \sup_{z \in B_{R/2}}|\tilde{\xi}_\eps(z)|,\\
\int_{B_R\setminus B_{R/2}}\frac{|\tilde{G}_\eps(z)||\nabla \tilde{u}_\eps(z)|}{|z|^{N-1}|z-\zeta|}\, dz &\leq \frac{C}{R^{N-1}}\norm{|\nabla \tilde{u}_\eps|}{L^\infty(B_{CR_0})} \lVert|\tilde{G}_\eps|\lVert_{L^\infty(B_{CR_0})}\int_{B_R\setminus B_{R/2}}\frac{1}{|z-\zeta|}\, dz\\
&\leq \frac{C}{R^{N-1}}\int_{B_R\setminus B_{R/2}}\frac{1}{|z-\zeta|}\, dz
\end{align*}
where in the last inequality we exploited the uniform-in-$\eps$ bound of $\tilde{G}_\eps$ and \eqref{e:2329}. Thus, by collecting the previous estiamtes, for every $\zeta \in B_{R/2}$ we get
$$
2\pi |\tilde{\xi}_\eps(\zeta)| \leq \frac{2\pi}{R^{N-1}|R-|\zeta||}\norm{|\nabla \tilde{u}_\eps|}{L^\infty(\partial B_{R})} +  \frac{C}{R^{N-1}}\int_{B_R\setminus B_{R/2}}\frac{dz}{|z-\zeta|} + \pi \sup_{z \in B_{R/2}}|\tilde{\xi}_\eps(z)|,
$$
where $C$ is a constant depending only on $u,\lambda,\Lambda,L,\eps_0$ and $R_0$. Therefore, taking the supremum over $\zeta \in B_{R/2}$ on both the sides of the previous inequality, we deduce
$$
2\pi \sup_{\zeta \in B_{R/2}}|\tilde{\xi}_\eps(\zeta)| \leq \frac{C}{R^{N}}\norm{|\nabla  \tilde{u}_\eps|}{L^\infty(\partial B_{R})} +  \frac{C}{R^{N-1}}\int_{B_R\setminus B_{R/2}}\frac{dz}{|z-\zeta|} + \pi \sup_{z \in B_{R/2}}|\tilde{\xi}_\eps(z)|,
$$
which implies the claimed result once we subtract the last term of the right hand side.\\

Step 2: $\tilde{\xi}_\eps$ is bounded in $C^{0,\omega}$ uniformly-in-$\eps$. By \eqref{e:cauchy}, for every $\zeta_1,\zeta_2 \in B_{\rho/4}$ we have
$$
\begin{aligned}
|\tilde{\xi}_\eps(\zeta_1) - \tilde{\xi}_\eps(\zeta_2)| \leq & \,\,\frac{1}{2\pi}\int_{\partial B_{\rho/2}}|\tilde{\xi}_\eps(z)|\frac{|\zeta_1-\zeta_2|}{|z-\zeta_1||z-\zeta_2|}\,d\sigma\\
&\, + \frac{1}{2\pi}\int_{B_{\rho/2}} |\tilde{G}_\eps(z)||\tilde{\xi}_\eps(z)| \frac{|\zeta_1-\zeta_2|}{|z-\zeta_1||z-\zeta_2|}\, dz \\
\leq &\,\, \frac{1}{2\pi}\left(\lVert\tilde{\xi}_\eps\lVert_{L^\infty(B_{\rho/2})}\int_{\partial B_{\rho/2}}\frac{1}{|z-\zeta_1||z-\zeta_2|}\,d\sigma\right) |\zeta_1-\zeta_2| \\
&+ \frac{1}{2\pi}\left(\lVert\tilde{\xi}_\eps\lVert_{L^\infty(B_{\rho/2})}
\lVert|\tilde{G}_\eps|\lVert_{L^\infty(B_{\rho/2})}\right) |\zeta_1-\zeta_2||\log|\zeta_1-\zeta_2||.
\end{aligned}
$$
Therefore, there complex-valued log-Lipschitz continuous function $\tilde{\xi}_\eps$ satisfies
$$
\nabla \tilde{u}_\eps(z) =\overline z^{N-1}\tilde{\xi}_\eps(z)=r^{N-1}e^{-i(N-1)\theta}\tilde{\xi}_\eps(z),
$$
and the claimed estimate for $\xi_\eps$ follows by exploiting the diffeomorphism $\Theta_\eps$.
\end{proof}

Next we prove a-priori estimate in nodal components, uniform-in-the approximating scheme.

\begin{Theorem}[\emph{A priori} uniform-in-$\eps$ Schauder estimates in two-dimensions in  nodal domains]\label{uniformwa_sector}
In the same setting of Theorem \ref{t:effective}, let $R_0>0, \eps_0>0$ be the constants of Lemma \ref{l:sus} and consider $A_\eps, u_\eps := u +  \varphi_\eps$ to be the approximating pair defined in \eqref{e:coppia},
with $S(u_\eps) \cap B_{R_0} = \{0\}$. Let $w_\eps \in H^1(B_1,|u|^a)$ be a (possibly discontinuous, if $a\geq 1$) solution to
$$
\mathrm{div}\left(|u_\eps|^a A_\eps\nabla w_\eps\right)=0 \qquad \mbox{in }B_{R_0},
$$
 in the sense of Definition \ref{definition.energy.a}. Then, for each nodal component $\Omega\subset B_{R_0}\setminus u_\eps^{-1}(0)$ the following holds true:
if $a\geq0$ and $w_\eps\in C^{1,\alpha}(\Omega)$, then
\begin{equation}\label{eq:nodal}
\left\|w_\eps\right\|_{C^{1,\alpha}(B_{R_0/2}\cap \Omega)}\leq C\|w_\eps\|_{L^\infty(\Omega)},
\end{equation}
for some constant $C>0$ independent of $\eps$.
\end{Theorem}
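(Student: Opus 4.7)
The plan is to argue by contradiction along the same blow-up scheme used in Sections \ref{s:section-proof-gradient2}--\ref{s:almost-flat} to prove Theorem \ref{uniformwa} part (ii), but with three modifications tailored to the present approximating setting: (a) we track a sequence $\eps_k\to 0^+$ instead of a sequence in $\mathcal S_{N_0}$; (b) the blow-up is performed inside a single nodal component $\Omega_k\subset B_{R_0}\setminus u_{\eps_k}^{-1}(0)$; (c) the limiting Liouville argument has to be done on an unbounded nodal domain of a harmonic polynomial rather than on the whole of $\R^2$.

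First, suppose the estimate \eqref{eq:nodal} fails along a sequence $\eps_k$, solutions $w_{\eps_k}$ on nodal components $\Omega_k$, normalized so that $\norm{w_{\eps_k}}{L^\infty(\Omega_k)}=1$. After multiplying by a radially decreasing cut-off $\eta\in C^\infty_c(B_{R_0})$ with $\eta\equiv 1$ on $B_{R_0/2}$, and extracting maximizing pairs $x_k,\zeta_k\in\Omega_k\cap B$ for $[\partial_i(\eta w_{\eps_k})]_{C^{0,\alpha}}$, I would introduce the centres $\hat{x}_k$ (either $x_k$ itself or its projection $\xi_k$ on $Z(u_{\eps_k})\cap\partial\Omega_k$, according to the dichotomy of Lemma \ref{l:involuto}) and the blow-up families $V_k,W_k,\overline V_k,\overline W_k$ exactly as in \eqref{e:blowlin}. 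The compactness of the family $\overline V_k$ and the identification $\overline V_k\cong\overline W_k$ on compact sets carry over verbatim, producing a limit $\overline W$ with non-constant $\alpha$-H\"older gradient.

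The substantive new point is the analysis of the weight $U_k=u_{\eps_k}(\hat x_k+r_k\cdot)/H(\hat x_k,u_{\eps_k},r_k)^{1/2}$. When $\hat x_k$ stays away from the singularity $0$ of $u$, the matrices $A_{\eps_k}$ agree with the Lipschitz $A$ on $B_{r_k}(\hat x_k)$ for $k$ large, so Proposition \ref{p:blow-up} and the Almgren frequency apply directly. When $\hat x_k\to 0$, instead, one exploits Lemma \ref{lem:xi_eps}: the uniform $C^{0,\omega}$ estimate for $\xi_\eps(z)=i\overline{\nabla u_\eps(z)}/z^{N-1}$ together with the fixed vanishing order $\mathcal V(0,u_{\eps_k})=N$ forces the normalized blow-up sequence $U_k$ to converge, up to subsequence and in $C^{1,\alpha}_{\loc}$, to the same $N$-homogeneous harmonic polynomial $P$ obtained as blow-up of $u$ at $0$; in particular $Z(U_k)\to Z(P)$ in Hausdorff distance and $\Omega_k$ (rescaled) converges to an unbounded nodal sector $\Omega_\infty$ of $P$ with connected boundary. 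With this input the hooking Lemma of Section \ref{s:hooking}, the interplay analysis of Section \ref{s:inter} and the almost-flat case of Section \ref{s:almost-flat} reproduce, uniformly-in-$k$, the estimate \eqref{e:terminelineare} for the linear term arising from the $\overline W_k$-equation; the only points to be checked are the uniform doubling bound \eqref{tipo.doubling} and the uniform upper bound on the vanishing orders, both guaranteed by Lemma \ref{l:sus} and the monotonicity of the Almgren frequency for $L_{A_{\eps_k}}$.

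Consequently $\overline W\in H^1_{\loc}(\Omega_\infty,|P|^a)$ solves
\[
\mathrm{div}(|P|^a\nabla\overline W)=0\quad\text{in }\Omega_\infty,\qquad |P|^a\nabla\overline W\cdot\nabla P=0\quad\text{on }\partial\Omega_\infty\setminus\{0\},
\]
together with the growth $|\overline W(x)|\le C(1+|x|)^{1+\alpha}$ and $\overline W(0)=|\nabla\overline W(0)|=0$. The main obstacle is then the Liouville step: since $\Omega_\infty$ is only a nodal domain, Theorem \ref{liou_a>-1} does not apply directly. The remedy, already flagged in the Remark preceding the statement, is to invoke the variant of the Liouville classification obtained through the conformal hodograph transformation of Lemma \ref{c:conformal-polynomial}: because $\partial\Omega_\infty$ is connected, the map $z\mapsto(\overline P(z),P(z))$ unfolds $\Omega_\infty$ onto a half-plane and converts the Neumann problem into $\mathrm{div}(|y|^a\nabla\widetilde W)=0$ in $\{y>0\}$, where the standard Liouville theorem forces $\widetilde W$ to be linear in the transformed variables, hence $\overline W$ is linear in the original variables. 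This contradicts the fact that $\overline W$ has non-constant gradient, closing the argument. I expect the verification of the connectedness of $\partial\Omega_\infty$ after blow-up, together with the uniform-in-$\eps$ hooking estimates near the fixed singularity, to be the most delicate technical points.
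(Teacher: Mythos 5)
Your proposal follows essentially the same route as the paper: contradiction, blow-up inside a nodal component with the centres $\hat{x}_k$ of Lemma \ref{l:involuto}, reuse of the hooking/interplay/almost-flat machinery, and a Liouville step on the limit nodal domain via Lemma \ref{c:conformal-polynomial} and Remark \ref{rem:conn_bdry}, with Lemma \ref{lem:xi_eps} supplying the uniform-in-$\eps$ control of the weight. One intermediate claim is stated too strongly: when $\hat{x}_k\to 0$ it is \emph{not} true in general that $U_k$ converges to the $N$-homogeneous blow-up $P$ of $u$ at the origin. The correct dichotomy, which the paper carries out using the representation $i\overline{\nabla U_k(z)}=\tfrac{r_k}{\rho_k}\xi_k(\hat{z}_k+r_kz)(\hat{z}_k+r_kz)^{N-1}$, is on the ratio $r_k/|\hat{z}_k|$: if $r_k/|\hat{z}_k|\to 0^+$ the limit is a \emph{linear} function and the limit domain is a half-plane (even if $\hat{z}_k\to 0$), while only when $r_k/|\hat{z}_k|$ stays bounded below does one obtain a translated $N$-homogeneous polynomial and an angular sector. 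Your final conclusion is unaffected because in both regimes the limit domain has connected boundary, so the Liouville variant still applies, but the identification of the blow-up limit as written would fail in the first regime and needs to be replaced by this finer case analysis.
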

\begin{proof}
We know that $\mathcal{V}(0,u_\eps)=N$, for $\eps \in (0,\eps_0)$. By choosing, possibly a smaller radius, we may assume that the nodal lines of $u$ and $u_\eps$ hit the boundary $\partial B_{R_0}$ transversally in exactly $2N$ points, so that there are exactly $2N$ nodal components of $u$ and $u_\eps$. At first, we need a uniform-in-$\eps$ expansion of $u_\eps$ at the origin. In light of \eqref{e:hopeless}, let us define
        \be\label{e:xi-k}
        \begin{aligned}
        \xi_{\eps}(z) &:= \frac{i\overline{\nabla u_\eps(z)}}{z^{N-1}}
        =\xi(z) + i\frac{\overline{\nabla \varphi_\eps( z)}}{z^{N-1}},
        \end{aligned}\qquad \mbox{for }z \in B_{R_0/2}.
        \ee
Therefore, for $\varepsilon$ sufficiently small, according to \eqref{e:hopeless}, the sequence $\xi_{\varepsilon}$ is uniformly bounded both from above and away from zero at $z=0$, and uniformly $\alpha$-H\"{o}lder continuous for every $\alpha \in (0,1)$. Then, the result follows once we show that the bound in \eqref{eq:nodal} holds uniformly as $\eps \to 0^+$.

Since the proof mimics the contradiction argument in the proof of Theorem \ref{uniformwa} (see Section \ref{s:proof-thm15}), we just sketch the strategy with a major attention on the differences.

Thus, fixed $\alpha \in (0,1)$, as $k\to+\infty$, let $\eps_k < \eps_0$ and set $A_k:=A_{\eps_k}, \Omega_k$ to be a sequence of nodal domains of $u_{\eps_k}$. For the sake of simplicity, we denote
$$
u_k:=u_{\eps_k} \in C^{1,\alpha}(B_{R_0}),\qquad \xi_k:=\xi_{\eps_k}\in C^{0,\alpha}(B_{R_0})\quad\mbox{and}\quad
w_k:=w_{\eps_k} \in C^{1,\alpha}(B_{R_0}\cap \Omega_k)
$$
is the solution to \eqref{eqwa} in the sense of Definition \ref{definition.energy.a}, satisfying $\norm{w_k}{L^\infty(\Omega_k)}=1$.
Set $\eta\in C^\infty_c(B_1;[0,1])$ to be a radially decreasing cut-off function such that $\eta\equiv1$ in $B_{R_0/2}$ and  $\mathrm{supp}\eta=\overline{B_{5R_0/8}}=:\overline{B}$.\\ Then, if we show that $(\eta w_k)_k$ is uniformly bounded in $C^{1,\alpha}(\overline{B_{R_0}\cap\Omega_k})$, using that $\eta \equiv 1$ in $B_{R_0/2}$, we infer the same bound for $(w_k)_k$ in $C^{1,\alpha}(\overline{B_{R_0/2}\cap\Omega_k)}$. Notice also that, since for every $z_0 \in B_{R_0}\setminus B$ we have $(\eta w_k)(z_0)=|(\nabla w_k) \eta|(z_0)=0$ for every $k>0$, then it is sufficient to ensure that the following seminorms
$$
\max_{i=1,\dots,n}\left[\partial_{i}(\eta w_k)\right]_{C^{0,\alpha}(\overline{B_{R_0}\cap\Omega_k})} \leq C,
$$
uniformly as $k\to +\infty$. Thus, by contradiction suppose that, up to subsequences,
$$
L_k:=\max_{i=1,\dots,n}\left[\partial_{i}(\eta w_k)\right]_{C^{0,\alpha}(\overline{B_{R_0}\cap\Omega_k})}\to +\infty,$$
that is, there exist two sequences of points $z_k,\zeta_k\in B\cap\overline{\Omega_k}$, such that
$$
L_k = \frac{|\partial_{i_k}(\eta w_k)(z_k)-\partial_{i_k}(\eta w_k)(\zeta_k)|}{|z_k-\zeta_k|^\alpha}\to+\infty.
$$
Naturally, up to relabeling we can assume that $i_k = 1$ for every $k>0$ and we can proceed as in Section \ref{s:section-proof-gradient2}. Indeed, the existence of a diverging sequence $L_k$ implies the existence of $ \hat{z}_k \in B_{3R_0/4}\cap\overline{\Omega_k}, r_k \searrow 0^+$  (as in Lemma \ref{l:involuto}) and of a sequence
$$
\overline{W}_k(z) =\frac{\eta( \hat{z}_k)}{L_kr_k^{1+\alpha}}\left(w_k( \hat{z}_k+r_k z)-w_k( \hat{z}_k) - r_k (\nabla w_k)( \hat{z}_k)\cdot z\right),
$$
of rescaled solutions, in the sense of Definition \ref{definition.energy.a}. More precisely let  $\overline{A}_k$ be defined as in \eqref{e:blow-up-finalproof}, $\nabla W_k(0)$ as in \eqref{e:grad} and
$$
U_k(z):=\frac{u_k(\hat{z}_k+r_k z)}{\rho_k}\qquad\mbox{with }\rho_k:=H(\hat{z}_k,u_k,r_k)^{1/2}= H(\hat{z}_k,u+\varphi_k,r_k)^{1/2}.
$$
All functions are defined in the rescaled nodal component of $U_k$: $\tilde\Omega_k=:(\Omega-\hat {z_k})/r_k$.
Then the following holds true:
$$
\int_{\tilde \O_k}|U_k|^a \overline{A}_k\nabla \overline{W}_k\cdot\nabla\phi\,dxdy = a\int_{\tilde\O_k} \phi  |U_k|^{a-1}  (\nabla U_k \cdot \overline{A}_k\nabla W_k(0))\,dxdy,
$$
for every $\phi \in C^\infty_c(\R^n)$. By introducing the auxiliary sequence $\overline{V}_k,$ is it possible to prove compactness of the sequence $\overline{W}_k$ and to show that, up to subsequences, it converges in $C^{1,\beta}_\loc(\Omega)$, for every $\beta \in (0,\alpha)$, and uniformly on every compact set of $\O$, to a function $\overline{W}$ whose gradient is non-constant and globally $\alpha$-H\"{o}lder continuous in $\O$. Here $\O$ is the limit set of the $\tilde\O_k$.\\

Now, by following the last part of Section \ref{s:proof-thm15}, we can directly infer that the limit $\overline{W}$ is a solution of the limit degenerate equation associated to the blow-up limit $U$ of the sequence $U_k$. More precisely, we infer that
\begin{enumerate}
    \item[\rm{(i)}] if $a\geq 1$, then $\overline{W}$ satisfies $\mathrm{div}\left(|U|^a \nabla \overline{W}\right)=0$ in $\O$;
\item[\rm{(ii)}] if $a \in [0,1)$, then $\overline{W}$ satisfies
$$
\mathrm{div}\left(|U|^a \nabla \overline{W}\right)=0\quad\mbox{in }\O, \qquad
|U|^a \nabla \overline{W} \cdot \nabla U = 0\quad\mbox{on }R(U),
$$
\end{enumerate}
where both the cases must be understood in the sense of Definition \ref{definition.energy.a}. At this, we finally make use of the construction developed in Section \ref{s:appr} in order to characterize the limit $U$ and its limiting domain $\O$. First, being $\mathcal{V}(0,u)=\mathcal{V}(0,\varphi_k)=N$ for every $k$ sufficiently large, we infer that
$$
U_k(z)= O(|z|^N),\quad |\nabla U_k(z)|=O(|z|^{N-1})
$$
and, by rescaling \eqref{e:xi-k}, we get
$$
i\overline{\nabla U_k(z)}= \frac{r_k}{\rho_k}\xi_k(\hat{z}_k+ r_k z)(\hat{z}_k+ r_k z)^{N-1}\qquad \mbox{for }z \in \frac{B_{R_0/2}-\hat{z}_k}{r_k}.
$$
By exploiting the compactness of $U_k$ in $C^{1,\alpha}_\loc(\R^2)$ and the uniform $\alpha$-H\"{o}lder estimate for the sequence $\xi_k$, it is possible to give a complete characterization of the limit $U$. Indeed, we distinguish two cases:
        \begin{enumerate}
        \item[(\rm{i})] if $r_k/|\hat{z}_k| \to 0^+$, then we get
           $$
           i\overline{\nabla U_k(z)}= r_k\frac{|\hat{z}_k|^{N-1}}{\rho_k}\xi_k(\hat{z}_k+ r_k z)\left(\frac{\hat{z}_k}{|\hat{z}_k|}+ \frac{ r_k}{|\hat{z}_k|} z\right)^{N-1} \qquad \mbox{in }\frac{B_{R_0/2}-\hat{z}_k}{r_k},
           $$
           where the last two factors converge to a non-zero complex number. On the other hand, since we already know that $U_k$ is converging in $C^{1,\alpha}_\loc(\R^2)$, we deduce the existence of $k_0>0$ sufficiently large, such that
           $$
            \frac{r_k}{|\hat{z}_k|}\frac{|\hat{z}_k|^N}{\rho_k}\leq C,\quad
           $$
           for every $k>k_0$. Thus, in light of the normalization $\rho_k$, there exists $\zeta_1 \in \C\setminus \{0\},$ such that $i\overline{\nabla U(z)} = \zeta_1$ for every $z \in \C$, namely that $U$ is a linear function and $\O$ is a half plane;
            \item[(\rm{ii})] if $r_k/|\hat{z}_k| \geq C$, for some $C>0$ universal, we proceeding by collecting the scaling factor $r_k$. Thus,
            $$
           i\overline{\nabla U_k(z)}= \frac{r_k^{N}}{\rho_k}\xi_k(\hat{z}_k+ r_k z)\left(\frac{\hat{z}_k}{r_k}+ z\right)^{N-1} \qquad \mbox{in }\frac{B_{R_0/2}-\hat{z}_k}{r_k}.
           $$
           By exploiting the compactness of $U_k$, we deduce that for $\rho_k \geq C r_k^N$ for $k$ sufficiently large and, being  $\hat{z}_k/r_k \in B_{1/C},$ by compactness we finally deduce that
           $$
           i\overline{\nabla U(z)} = \lambda(\zeta_2 + z)^{N-1}\qquad\mbox{for every }z \in \R^2,
           $$
            for some $\lambda \in \R\setminus\{0\}$ and $\zeta_2 \in \C$. Being $U$ a harmonic polynomial, we immediately deduce that it coincides (up to a translation) with a $N$-homogeneous harmonic polynomial. In such a case, $\O$ is an angular sector.
        \end{enumerate}
       Finally, the contradiction follows by applying a Liouville type theorem: indeed, since by construction $\overline{W}(0) = |\nabla \overline{W}|(0)=0$, we deduce, in light of global $\alpha$-H\"{o}lder regularity of the gradient of $\overline{W}$, that
$$|\overline{W}(x)|\leq C\left(1+|x|\right)^{1+\alpha}\quad\mbox{in }\R^2.$$
Finally, regardless of whether the limit blow-up $U$ is a linear function or a $N$-homogeneous harmonic polynomial, Liouville's Theorem \ref{liou_a>-1} implies that $\overline{W}$ must be a linear function, since the boundary $\partial\O$ is connected (see Remark \ref{rem:conn_bdry}). This contradicts the fact that $\overline{W}$ has a non-constant gradient.
\end{proof}

\subsection{A posteriori \texorpdfstring{$C^{1,1-}$}{Lg} regularity around a given singular point}
The aim of this section is the proof of Theorem \ref{t:effective}, by exploiting the approximation procedure and the  a priori bounds on nodal components given in the previous subsection.

\begin{proof}[Proof of Theorem \ref{t:effective}]

Let $u \in C^{1,1-}(B_1)$ be a solution to \eqref{equv} with
$$
S(u)\cap B_1 = \{0\},\quad \mbox{and}\quad\mathcal{V}(0,u)=N\geq2.
$$
Through the proof, we set $R_0>0,\eps_0>0$ to be the constant introduced in Lemma \ref{l:sus}. Now, let $P_u$ be the unique $N$-homogeneous blow-up limit of $u$ at the origin and $\xi \in C^{0,\omega}(B_{R_0/2})$ where $\omega(t):=t|\log t|$ and
$$
\xi(z) := \frac{i\overline{\nabla u(z)}}{z^{N-1}}\quad \mbox{and such that }\,\xi(0)\geq C,
$$
for some positive constant $C>0$. Then, for $\eps\in (0,\eps_0)$ we consider the pair
$$
u_\eps := u +  \varphi_\eps\colon B_{R_0}\to \R\qquad\mbox{and}\qquad A_\eps := A + (\mathbb{I}-A)\eta_\eps \in \mathcal{A},
$$
where $\varphi_\eps$ is the approximating sequences associated to $P_u$ constructed in Section \ref{s:appr} and satisfying $S(u_\eps) \cap B_{R_0} = \{0\}$. Without loss of generality we can assume that the $2N$ nodal lines of $u$ intersect the boundary $\partial B_{R_0}$ transversally in $2N$ distinct points (otherwise we may decrease the radius of the ball). In this way, u has exactly $2N$ nodal regions in $B_{R_0}$ and the same holds for $u_\eps$ for $\eps$ sufficiently small. Le us consider a nodal component $\O\subset B_{R_0}$ of $u$  and a family of nodal components $\O_{\eps}\subset B_{R_0}$ of $u_\eps$ converging to $\O$ with respect to the Hausdorff distance.
As in the previous Subsection \ref{sec:hodo}, we map $\O$ and $\O_\eps$ to bounded regions of the upper half-plane through the quasiconformal hodograph transformations defined as
$$
\Theta(x,y):=(\overline{u}(x,y),u(x,y))\quad \mbox{and}\quad \Theta_\eps(x,y):=(\overline{u}_\eps(x,y),u_\eps(x,y)).
$$
and the corresponding matrices
$$
B(x,y):=
\begin{pmatrix}
(\mathrm{det}A)(\Theta^{-1}(x,y)) & 0 \\
0 & 1
\end{pmatrix}\;\quad \mbox{and}\quad B_\eps(x,y):=
\begin{pmatrix}
(\mathrm{det}A_\eps)(\Theta_\eps^{-1}(x,y)) & 0 \\
0 & 1
\end{pmatrix}.
$$
We refer to \eqref{e:A-ham-conjugate} for the general construction of the $A$-harmonic conjugate. Note also that there is a gain of H\"older continuity: indeed, if we set $\tilde{\O}:=\Theta(\O)\subset \{y>0\},$ then each $B_\eps\in C^{0,\alpha}(\tilde\O)$ as it is the identity in a neighbourhood of zero, where $\Theta_\eps^{-1}$ is not more than $C^{0,\bar\alpha}$;  while $B\in C^{0,\bar\alpha}(\tilde\O)$ and not more. On the other hand, obviously $B_\eps$ converges uniformly to $B$. We denote $W:=w\circ\Theta^{-1}$, which is a $C^{1,\bar\alpha}(\tilde\Omega)$ solution to
$\mathrm{div}\left(|y|^a B\nabla W\right)=0$ in $\tilde{\O}$, in the sense of Definition \ref{definition.energy.a}. More precisely,
$$
\int_{\tilde{\O}}|y|^a B\nabla W\cdot\nabla\phi\,dxdy=0,\quad \text{for every } \phi\in C^\infty_c(\{y>0\}).
$$
Now we solve the mixed boundary value problems
$$
\mathrm{div}\left(|y|^a B_\eps\nabla W_\eps\right)=0 \quad \mbox{in }\tilde\O,\quad
|y|^a \partial_y W_\eps = 0\quad \mbox{on } \partial{\tilde \O}\cap\{y=0\},\quad
W_\eps=W\quad \mbox{on }\partial{\tilde \O}\cap\{y>0\},
$$
whose solution must be intended as the unique minimizer $W_\eps$ of
$$
\min\left\{\int_{\tilde\O}|y|^a B_\eps\nabla f\cdot \nabla f\,dxdy \colon \, f=W+V ,\,\, V\in \tilde H^1_0(\tilde\O,|y|^a\,)\right\}.
$$
Here $\tilde H^1_0(B_1,|y|^a)$ is the defined as
the completion in $H^1(\tilde\O,|y|^a)$ of the space
$$\{f\in C^\infty(\overline{\tilde \O})\colon f\equiv 0\; \mbox{in a neighbourhood of}\; \partial{\tilde \O}\cap\{y>0\}\}.$$
Note that, in light of the  Poincar\'e inequality in \cite[Lemma 3.2]{JeonVita},
\[
(f,g)_\eps :=\int_{\tilde\O}|y|^a B_\eps\nabla f\cdot \nabla g\,dxdy
\]
defines a family of equivalent scalar products in $\tilde H^1_0(B_1,|y|^a)$. We write $V_\eps:=W_\eps-W$ and we wish to prove that $V_\eps\to 0$ so that $W_\eps$ is really and approximating family. To this aim, we first observe that there is a uniform bound on their norms in $\tilde H^1_0(B_1,|y|^a)$. Next we test both equations by $V_\eps$  to infer
$$
\int_{\tilde\O} |y|^aB_\eps \nabla V_\eps\cdot \nabla V_\eps\,dxdy=\int_{\tilde\O} |y|^a(B-B_\eps) \nabla W\cdot \nabla V_\eps\,dxdy\;,
$$
where the right hand side converges to zero as $\eps\to 0$, since $B_\eps\to B$ uniformly. By Schauder estimates, the convergence holds in the $C^{1,\alpha}$ topology on compact subsets of $\tilde\Omega$.

Now consider $w_\eps:=W_\eps\circ\Theta_\eps$, defined in $\Theta_\eps^{-1}(\tilde\Omega)=\Theta_\eps^{-1}(\Theta(\Omega))$ which are solutions to
$$
\mathrm{div}\left(|u_\eps|^a A_\eps\nabla w_\eps\right)=0\qquad\mbox{in }\Theta_\eps^{-1}(\tilde{\O})
$$
in the sense of Definition \ref{definition.energy.a}.  On the other hand, thanks to the previous results in \cite[Theorem 1.1]{TerTorVit1}, as each $B_\eps$ is bounded in $C^{0,\alpha}(\omega)$ (not uniformly-in-$\eps$), we have bounds of the $W_\eps$ in $C^{1,\alpha}(\omega)$, for all $\omega$ whose relative closure with respect to $\{y>0\}$ is contained in $\tilde\O$. As $\Theta_\eps$ is also  $C^{1,\alpha}$, we infer the same regularity for the solution $w_\eps$.

Unfortunately, at this stage, we still can not say that such $C^{1,\alpha}$ bounds in the nodal component are uniform-in-$\eps$. The reason is that the $C^{0,\alpha}$-bounds on the matrices $B_\eps$ can not be uniform-in-$\eps$ in a neighbourhood of the singular point, as can be seen from the local behavior of the inverse of the hodograph map $\Theta$ at the origin. Here, Theorem \ref{uniformwa_sector} comes to our rescue by ensuring that the $C^{1,\alpha}$-bounds in a nodal component are uniform-in-$\eps$, for $\eps<\eps_0$. Thanks to this uniformity, as $w$ is the limit of the $w_\eps$, it inherits the regularity properties of its approximants.
\end{proof}

\subsection{Proof of Theorem \ref{effective2}}\label{s:fin-effective} The aim of this last section is to show uniformity-in-$\mathcal S_{N_0}$ and \emph{a posteriori} $C^{1,1-}$-regularity for the solution $w$ to \eqref{eqwa}. We split the proof into two steps.\\

Step 1: covering. Any point $x$ in the ball $B_{1}$ either
$$
\mathrm{(i)}\,\,\, x \in B_1\setminus Z(u),\qquad
\mathrm{(ii)}\,\,\, x \in B_1\cap R(u),\qquad
\mathrm{(iii)}\,\,\, x \in B_1\cap S(u).
$$
Then, there exists a small radius $r_x>0$ such that either $B_{r_x}(x)\cap Z(u)=\emptyset$ in case (i), $B_{r_x}(x)\cap S(u)=\emptyset$ in case (ii), or $B_{r_x}(x)\cap S(u)=\{x\}$ in case (iii). Being $w$ a continuous solution to \eqref{eqwa} with $A\in\mathcal A$, and $a\geq0$, we know that in any case $w\in C^{1,1-}_\loc(B_{r_x}(x))$. The latter regularity relies in classical Schauder theory for second order uniformly elliptic equations in case (i), \cite[Theorem 1.1 and Lemma 2.12]{TerTorVit1} in case (ii), and Theorem \ref{t:effective} in case (iii). Hence, by compactness one can extract a finite covering of $\overline{B_{r}}$ (for any $0<r<1$) with the same property, obtaining the desired regularity together with an estimate depending on $u,Z(u)$. In particular, for any $\alpha\in(0,1)$ there exists a constant $C>0$ depending on $\alpha,a,\lambda,\Lambda,L,u$ and $Z(u)$ such that
\begin{equation*}
\left\|w\right\|_{C^{1,\alpha}(B_{1/2})}\leq C\left\|w\right\|_{L^\infty(B_1)}.
\end{equation*}

Step 2: uniformity-in-$\mathcal S_{N_0}$.
    Once solutions are $C^{1,1-}$ regular, the uniformity of the regularity estimate above in $\mathcal S_{N_0}$ follows from Theorem \ref{uniformwa} part (ii).

\section{Liouville theorem}\label{sec:liouville}
This section is devoted to the proof of the Liouville theorem for entire solutions to \eqref{e:a-entire}.

\subsection{Strong unique continuation principle for degenerate or singular equations}\label{sec:unique}
The aim of this section is the proof of a strong unique continuation property for weak solutions  $w \in H^{1,a}(B_1^+)$ to
\begin{equation}\label{eqa}
\mathrm{div}\left(x_n^{a}A\nabla w\right)=0 \quad\mathrm{in \ } B_1^+,\qquad
x_n^{a}A\nabla w\cdot  e_{n}=0 \quad\mathrm{in \ } B_1',
\end{equation}
with $a>-1$ and $A(x',x_n)$ symmetric, uniformly elliptic, with Lipschitz continuous entries and such that
\be\label{e:matrice-dopo}
A(0,0)=\mathbb I,\qquad A(x',x_n)=\left(\begin{array}{c|c}
B(x',x_n)&{\mathbf 0}\\\hline
\mathbf 0 &m(x',x_n)
\end{array}\right)
\ee
for $m$ strictly positive. 
\begin{Proposition}
Let $a>-1$ and $w$ be a nontrivial weak solution to \eqref{eqa} with $w(0)=0$. Then, up to consider the symmetric extension of $w$ across $\Sigma=\{x_n=0\}$, there exist $k\in\mathbb N\setminus\{0\}$ and a homogeneous entire $L_a$-harmonic polynomial $P$ in $\R^n$ of degree $k$ which is symmetric with respect to $\Sigma$ such that
$$
w_r(x)=\frac{w(r x)}{r^k}\to P(x)\qquad\mathrm{as \ }r\to0^+
$$
in $H^{1,a}_\loc(\R^n)$ and $C^{1,\alpha}_\loc(\R^n)$ for any $\alpha\in(0,1)$.
\end{Proposition}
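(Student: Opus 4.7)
The plan is to adapt the classical Almgren--Garofalo--Lin strategy to the weighted setting via four steps: (i) even reflection across $\Sigma$; (ii) an Almgren-type monotonicity formula with weight $|x_n|^a$; (iii) blow-up compactness and classification of tangent maps; (iv) uniqueness of the blow-up limit by a Monneau-type argument.

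First I would use the block-diagonal structure \eqref{e:matrice-dopo} and the conormal condition $|x_n|^a A\nabla w\cdot e_n=0$ on $\Sigma$ to perform an even reflection of both $w$ and $A$ across $\Sigma$. Thanks to \eqref{e:matrice-dopo} there are no $\partial_{x_i}\partial_{x_n}$ mixed terms that would destroy Lipschitz regularity of the reflected matrix, so the extended $\tilde A$ is still uniformly elliptic and Lipschitz on $B_1$, and $w$ is a weak solution of $\mathrm{div}(|x_n|^a\tilde A\nabla w)=0$ in $B_1$ in the symmetric (even) class. From now on the proof is carried out on $B_1$ in the whole space.

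Next I would introduce an Almgren frequency adapted to the weight $|x_n|^a$, mimicking the construction in Section \ref{structure.nodal}: take the metric $g$ associated to $\tilde A$ as in \eqref{e:g}, define geodesic ellipsoids $E_r$, and set
\begin{equation*}
H(r)=\int_{\partial E_r}|x_n|^a\omega\, w^2\,d\sigma_g,\qquad D(r)=\int_{E_r}|x_n|^a\omega|\nabla_g w|^2\,dV_g,\qquad N(r)=e^{Cr}\frac{rD(r)}{H(r)}.
\end{equation*}
Computing $H'(r)$ and $D'(r)$ via a weighted Rellich--Pohozaev identity, the error terms produced by differentiating the weight $|x_n|^a$ along the radial vector field are of homogeneity zero and can be absorbed, exactly as in \cite{SirTerVit1,TerTorVit1}. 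This yields monotonicity of $N$ and the doubling-type estimate analogous to \eqref{doub}, which in particular rules out infinite-order vanishing at the origin: if $N(0^+)=+\infty$ then iterating the doubling inequality forces $w\equiv 0$, contradicting nontriviality. Hence $k:=N(0^+)<+\infty$.

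Then I would consider the normalized blow-up family $\tilde w_r(x)=w(rx)/H(r)^{1/2}$ (and then rescale back at the end). Using the monotonicity of $N$, a Caccioppoli-type inequality and the fact that $|x_n|^a$ is an $\mathcal A_2$-Muckenhoupt weight for $a\in(-1,1)$ (with standard Moser/DeGiorgi machinery extending to $a\geq 1$ in the even class, cf.\ \cite{SirTerVit1}), the family $(\tilde w_r)$ is bounded in $H^{1,a}_\loc(\R^n)$ and in $C^{0,\alpha}_\loc(\R^n)$. Along a subsequence $r_j\to 0^+$, $\tilde w_{r_j}\to\Theta$ strongly, where $\Theta$ solves $\mathrm{div}(|x_n|^a\nabla\Theta)=0$ in $\R^n$ and is even in $x_n$. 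Passing to the limit in the monotonicity formula gives $N(\cdot,\Theta)\equiv k$, and the equality case in the derivation of $N'\geq 0$ forces $\Theta$ to be $k$-homogeneous. Expanding $\Theta$ in spherical harmonics with respect to the natural weighted Laplace--Beltrami operator on $\mathbb S^{n-1}$ associated to $|x_n|^a$, the admissible homogeneity degrees form a discrete set, and the evenness constraint picks out precisely the non-negative integers; this forces $k\in\mathbb N\setminus\{0\}$ (since $w(0)=0$) and $\Theta=P$ a homogeneous $L_a$-harmonic polynomial of degree $k$ symmetric across $\Sigma$.

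The main obstacle is upgrading subsequential convergence to uniqueness of the tangent map $P$, which is the content of the statement. For this I would prove a Monneau-type almost-monotonicity: for any $k$-homogeneous $L_a$-harmonic symmetric polynomial $P$,
\begin{equation*}
M_P(r):=\frac{1}{r^{n-1+a+2k}}\int_{\partial E_r}|x_n|^a\omega\,(w-P)^2\,d\sigma_g
\end{equation*}
satisfies $M_P'(r)\geq -Cr^\gamma$ for some $\gamma>0$, which is obtained by differentiating, using the equation for $w-P$ (which has a controllable error coming from $\tilde A(x)-\mathbb I=O(|x|)$) and the equipartition of energy encoded by the homogeneity of $P$. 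The delicate point is handling the boundary integrals at $\Sigma$ where the weight degenerates or blows up: one has to justify integrations by parts via an approximation/capacity argument using that $\{x_n=0\}$ has zero $H^{1,a}$-capacity for $a\geq 1$, and direct trace arguments when $a\in(-1,1)$. Once $M_P$ is almost-monotone for every candidate limit $P$, a standard argument yields that the blow-up limit is unique, independent of the subsequence. Combining uniqueness with the compactness in $H^{1,a}_\loc(\R^n)$ and with the $C^{1,\alpha}$ regularity theory for solutions of $\mathrm{div}(|x_n|^a\tilde A\nabla\cdot)=0$ developed in \cite[Theorem 1.1]{SirTerVit1}, the convergence $w_r\to P$ upgrades to $C^{1,\alpha}_\loc(\R^n)$ for every $\alpha\in(0,1)$, completing the proof.
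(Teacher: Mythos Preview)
Your proposal is correct and follows essentially the same skeleton as the paper's (sketched) proof: even reflection, Almgren-type monotonicity with weight $|x_n|^a$ via a Pohozaev identity, blow-up compactness, classification of the homogeneous limit, and a secondary monotonicity formula for uniqueness. The minor differences are in the specific tools chosen at two junctures. For the classification of the tangent map, the paper invokes the Liouville theorem \cite[Lemma B.2]{DelFelVit} (entire $L_a$-harmonic even functions with polynomial growth are polynomials), whereas you argue via the spectral decomposition of the weighted Laplace--Beltrami operator on the sphere; both are standard and yield the same conclusion $k\in\mathbb N$. For uniqueness of the blow-up, the paper appeals to a Weiss-type monotonicity formula, while you set up the closely related Monneau functional $M_P$; since Monneau monotonicity is typically derived from Weiss monotonicity plus homogeneity of $P$, these are essentially two phrasings of the same mechanism. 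Your treatment of the boundary/capacity issues when $a\geq 1$ is more explicit than the paper's sketch, which simply refers to \cite{DelFelVit} for the full range $a>-1$.
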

\proof
Since the proof follows by extending known results of the case $a\in (-1,1)$ to the more general $a>-1$, we simply sketch the main ideas. First, by exploiting the local $C^{1,1-}$-regularity results of weak solutions to \eqref{eqa} in \cite{SirTerVit1,TerTorVit1}, one can provide an Almgren type monotonicity formula with center in the origin. In fact, one can argue as in \cite{DelFelVit}, and deduce the monotonicity from a Pohozaev type identity in the full range $a>-1$ and with coefficients $A$ depending also on the vertical variable $x_n$. Then, the blow-up sequence
$$\tilde w_r(x)=\frac{w(r x)}{\sqrt{H(r)}},\qquad\mbox{with}\quad H(r)=\frac{1}{r^{n-1+a}}\int_{\partial B_r}|x_n|^a\mu w^2\,d\sigma,\qquad \mu(x)=\frac{A(x)x\cdot x}{|x|^2}$$
converges, up to a subsequence, in $H^{1,a}_\loc(\R^n)\cap C^{1,1-}_\loc(\R^n)$  to a $\gamma$-homogeneous function $W\in H^{1,a}_\loc(\R^n)$
solving
$$
\mathrm{div}\left(x_n^{a}\nabla W\right)=0 \quad \mathrm{in \ } \R^n_+,\qquad
x_n^{a}\partial_n W=0 \quad \mathrm{on \ } \Sigma.
$$
in a weak-sense. Then, the $\gamma$-homogeneity implies the growth condition
$$|W(x)|\leq C(1+|x|)^\gamma\qquad\mathrm{in \ }\R^n_+$$
and then the Liouville theorem \cite[Lemma B.2]{DelFelVit} implies that $W$ is a homogeneous polynomial and hence $\gamma=\mathrm{deg}(W) \in 1+ \mathbb N$. Finally, by exploiting the validity of a Weiss type monotonicity formula, one can deduce the convergence of the blow-up sequeunce
$$w_r(x)=\frac{w(r x)}{r^k}\to P(x),$$
where $P$ is a homogeneous polynomial satisfying the thesis of the Proposition.
\endproof
\begin{Lemma}\label{tracepolynomial}
Let $P$ be a nontrivial $L_a$-harmonic polynomial which is symmetric with respect to $\Sigma=\{x_n=0\}$. Then $\mathrm{deg}(P)=\mathrm{deg}(p)$ where $p(x')=P(x',0)$.
\end{Lemma}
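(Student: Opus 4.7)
\medskip

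\noindent\textbf{Proof plan.} The inequality $\deg(p) \leq \deg(P)$ is immediate since $p$ is the restriction of $P$ to the hyperplane $\Sigma = \{x_n = 0\}$. The task is to establish the reverse inequality, and the plan is to exploit the equation $L_a P = 0$ to express all coefficients of the Taylor expansion of $P$ in the vertical variable as iterated horizontal Laplacians of $p$, forcing a sharp degree bound.

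\medskip

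\noindent First, I would use the symmetry of $P$ across $\Sigma$, which implies $P$ is even in $x_n$, to write the unique expansion
\begin{equation*}
P(x',x_n) \,=\, \sum_{k = 0}^{K} x_n^{2k}\, q_k(x'),
\end{equation*}
for some polynomials $q_k$ in the $x'$-variable, with $q_0 = p$. Next, writing out the equation $\mathrm{div}(x_n^a \nabla P) = 0$ pointwise in the open half-space $\{x_n > 0\}$ in the non-divergence form
\begin{equation*}
\Delta_{x'} P \,+\, \partial_{x_n x_n} P \,+\, \frac{a}{x_n}\,\partial_{x_n} P \,=\, 0,
\end{equation*}
and plugging in the expansion above, every odd power of $x_n$ disappears and, matching coefficients of $x_n^{2k}$ for each $k \geq 0$, I obtain the recursion
\begin{equation*}
\Delta_{x'}\, q_k \,+\, 2(k+1)(2k+1+a)\, q_{k+1} \,=\, 0.
\end{equation*}

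\medskip

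\noindent The crucial observation is that the prefactor $2(k+1)(2k+1+a)$ is nonzero for every $k \geq 0$, precisely because $a > -1$. This allows one to solve the recursion explicitly,
\begin{equation*}
q_{k+1} \,=\, -\frac{1}{2(k+1)(2k+1+a)}\,\Delta_{x'}\, q_k,
\end{equation*}
so that by induction $q_k$ is a nonzero constant multiple of $\Delta_{x'}^k p$. In particular, if $d = \deg(p)$, then $\deg(q_k) \leq \max\{d - 2k, -\infty\}$, with the convention that $q_k \equiv 0$ as soon as $2k > d$. Therefore $\deg(x_n^{2k} q_k) \leq 2k + (d - 2k) = d$ for every $k$, which yields $\deg(P) \leq d = \deg(p)$ and hence the equality.

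\medskip

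\noindent The only delicate point in the argument, and the one I would single out as the main structural obstacle, is making sure the recursion never stalls: the denominators $2(k+1)(2k+1+a)$ must all be nonzero. This is exactly where the assumption $a > -1$ (the range under which Lemma \ref{tracepolynomial} is invoked in the unique continuation statement above) is used in an essential way; if $a$ were allowed to hit one of the critical values $-(2k+1)$ with $k \geq 1$, the recursion could decouple the vertical coefficients $q_{k+1}$ from the trace $p$, and the degree identification would fail.
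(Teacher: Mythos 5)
Your proof is correct, and it takes a genuinely different route from the paper's. The paper argues by induction on $\mathrm{deg}(P)$ (first for homogeneous polynomials, then via blow-down): it shows $P$ must depend on some horizontal variable $x_i$ --- since the only $L_a$-harmonic functions of $x_n$ alone are $1$ and $x_n|x_n|^{-a}$, and the latter is neither symmetric nor polynomial --- and then passes to $\partial_i P$, which is again a nontrivial symmetric $L_a$-harmonic polynomial of one degree less. Your argument instead expands $P=\sum_k x_n^{2k}q_k(x')$ (legitimate by evenness in $x_n$), and the coefficient matching you perform is exact: the recursion $\Delta_{x'}q_k+2(k+1)(2k+1+a)q_{k+1}=0$ is the correct one, the prefactors vanish only at $a\in\{-1,-3,-5,\dots\}$, and since distinct powers of $x_n$ cannot cancel, $\mathrm{deg}(P)=\max_k(2k+\mathrm{deg}(q_k))\leq\mathrm{deg}(p)$. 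Your route is more explicit and actually proves more: the identity $q_k=c_k\Delta_{x'}^k p$ with $c_k\neq0$ shows that $P$ is entirely determined by its trace $p$, which immediately yields Corollary \ref{tracepolynomial2} as well (if $p\equiv0$ then all $q_k\equiv0$), whereas the paper derives that corollary separately. The paper's soft induction, on the other hand, avoids any computation and generalizes more readily to settings where an explicit indicial recursion is unavailable. Both are complete proofs; yours makes the role of the threshold $a>-1$ maximally transparent.
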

\proof
Let us prove the result for a homogeneous nontrivial $L_a$-harmonic polynomial $P$ which is symmetric with respect to $\Sigma=\{x_n=0\}$. We proceed by induction on $\mathrm{deg}(P)=k\in\mathbb N$. If $k=0$ then $P$ is constant and $p$ too. Let us suppose the result true for $k$ and prove it for $k+1$. Let us first remark that $P$ must depend on the variable $x_i$ for some $i\in\{1,\dots,n-1\}$. Otherwise fixed $a>-1$, up to multiplicative constants, the equation $L_a W=0$ has only two linearly independent solutions which depend only on $x_n$; that is
$$
W_1(x_n):=1,\quad\mbox{and}\quad W_2(x_n):=x_n|x_n|^{-a}\qquad\text{($W_2(x_n):=\log |x_n|$ when $a=1$)}.
$$
Clearly, since $W_2$ is not an admissible solution,
it follows that $W$ is a constant, in contradiction with the assumption $\mathrm{deg}(P)=k+1\geq1$. Hence, since $P$ depends on $x_i$ for some $i\in\{1,...,n-1\}$, then $Q=\partial_i P$ is still a nontrivial $L_a$-harmonic polynomial which is symmetric with respect to $\Sigma=\{x_n=0\}$ with $\mathrm{deg}(Q)=k$ and this is due to the homogeneity of $P$. Then by inductive hypothesis we know that the trace $q(x')=Q(x',0)$ is still a nontrivial polynomial with $\mathrm{deg}(Q)=\mathrm{deg}(q)$. But $q=\partial_ip$ which implies that $\mathrm{deg}(p)=k+1$.\\
Ultimately, the case of non-homogeneous polynomials follows by a blow-down argument.
\endproof
\begin{Corollary}\label{tracepolynomial2}
Let $P$ be a $L_a$-harmonic polynomial which is symmetric with respect to $\Sigma=\{x_n=0\}$. If $P\equiv 0$ on $\Sigma$ then $P$ is trivial.
\end{Corollary}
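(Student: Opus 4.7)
The statement is the immediate contrapositive of Lemma \ref{tracepolynomial}, so the plan is simply to apply that lemma. Suppose by contradiction that $P$ is a nontrivial $L_a$-harmonic polynomial, symmetric with respect to $\Sigma=\{x_n=0\}$, whose trace $p(x')=P(x',0)$ is identically zero on $\Sigma$. Then $p$ is the zero polynomial, which by convention has no well-defined (nonnegative) degree, or equivalently has degree strictly less than that of any nontrivial polynomial.

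On the other hand, Lemma \ref{tracepolynomial} guarantees that for every nontrivial $L_a$-harmonic polynomial $P$ symmetric with respect to $\Sigma$, one has $\mathrm{deg}(P)=\mathrm{deg}(p)$, and in particular the trace $p$ is itself nontrivial (this is how the induction in the proof of Lemma \ref{tracepolynomial} is set up: if $P$ were nontrivial of degree $k+1$, a suitable horizontal derivative $\partial_i P$ yields a nontrivial $L_a$-harmonic polynomial of degree $k$, whose trace is shown to be nontrivial, and this trace is $\partial_i p$). This contradicts $p\equiv 0$.

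Hence the assumption that $P$ is nontrivial cannot hold, and the corollary follows. There is no real obstacle here: the work has already been done in Lemma \ref{tracepolynomial}, and the corollary is just a convenient reformulation that will be used in the unique continuation and Liouville-type arguments for the degenerate/singular equation \eqref{eqa}.
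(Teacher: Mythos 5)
Your proof is correct and matches the paper's intent exactly: the corollary is stated without proof precisely because it is the immediate contrapositive of Lemma \ref{tracepolynomial}, whose induction already establishes that the trace of a nontrivial symmetric $L_a$-harmonic polynomial is itself a nontrivial polynomial of the same degree. Nothing further is needed.
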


We remark that in two dimensions the full classification of $L_a$-harmonic polynomials which are symmetric with respect to $\Sigma$ is given in \cite[Proposition 4.13]{SirTerTor}.
The following is a direct consequence of the previous result.
\begin{Proposition}[Strong trace unique continuation property]\label{traceuniquecontinuation}
Let $a>-1$ and $w$ be a weak solution to \eqref{eqa}. If $w(x',0)=O(|x'|^k)$ as $|x'|\to0^+$ for any $k\in\mathbb N$, then, $u$ is trivial in $B_1^+$.
\end{Proposition}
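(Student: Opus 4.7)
The plan is to argue by contradiction combining the blow-up classification in the previous Proposition with the rigidity statement of Corollary \ref{tracepolynomial2}. Suppose that $w$ is a nontrivial weak solution to \eqref{eqa} whose trace on $\Sigma$ vanishes of infinite order at the origin in the $x'$-variable, that is $w(x',0)=O(|x'|^k)$ as $|x'|\to 0^+$ for every $k\in\mathbb N$. In particular, evaluating at $x'=0$ we get $w(0)=0$.

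Since $w$ is nontrivial and $w(0)=0$, the previous Proposition applies: up to passing to the even symmetric extension of $w$ across $\Sigma$, there exist an integer $k_0\in\mathbb N\setminus\{0\}$ and a nontrivial homogeneous $L_a$-harmonic polynomial $P$ of degree $k_0$, symmetric with respect to $\Sigma$, such that, for some $r_j\to 0^+$, the blow-up sequence
\[
w_{r_j}(x)=\frac{w(r_j x)}{r_j^{k_0}}
\]
converges to $P$ in $C^{1,\alpha}_\loc(\R^n)$ for every $\alpha\in(0,1)$ (the $H^{1,a}_\loc$ convergence is not needed here, since local uniform convergence on $\Sigma$ suffices).

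Now I exploit the infinite-order vanishing of the trace with the specific power $k=k_0+1$: by hypothesis there exist constants $C,\rho>0$ such that $|w(x',0)|\leq C|x'|^{k_0+1}$ for $|x'|<\rho$. Hence, on every compact set $K'\Subset \Sigma$ and for $r_j$ small enough,
\[
\sup_{x'\in K'}|w_{r_j}(x',0)|=\sup_{x'\in K'}\frac{|w(r_j x',0)|}{r_j^{k_0}}\leq C r_j\,\sup_{x'\in K'}|x'|^{k_0+1}\longrightarrow 0.
\]
On the other hand, by the $C^{1,\alpha}_\loc$ convergence, $w_{r_j}(x',0)\to P(x',0)$ uniformly on $K'$. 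Therefore $P(x',0)\equiv 0$ on $\Sigma$, and Corollary \ref{tracepolynomial2} forces $P$ to be identically zero, contradicting the nontriviality of the blow-up limit. Hence $w\equiv 0$ in $B_1^+$.

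The only conceptually delicate point is the interplay between the order $k_0$ produced by the blow-up Proposition and the arbitrarily high order of vanishing assumed for the trace; this is precisely what allows one to beat the homogeneity of $P$ by a factor $r_j$ and obtain a flat trace in the limit. Once this mismatch of orders is set up, the rest reduces to the polynomial rigidity provided by Lemma \ref{tracepolynomial} and its Corollary \ref{tracepolynomial2}, which identifies $L_a$-harmonic $\Sigma$-symmetric polynomials through their trace on $\Sigma$.
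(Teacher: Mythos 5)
Your proof is correct and is precisely the argument the paper intends: the paper only remarks that the statement follows from ``the previous result,'' i.e.\ from combining the blow-up classification (which yields a nontrivial homogeneous $L_a$-harmonic limit $P$ of some finite degree $k_0$) with Corollary \ref{tracepolynomial2}, and you have filled in the details correctly, in particular the key point that the assumed vanishing of order $k_0+1$ of the trace beats the normalization $r^{k_0}$ and forces $P(\cdot,0)\equiv 0$.
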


\subsection{A Liouville theorem for general exponents}
The aim of this section, is the proof of Theorem \ref{liou_a>-1}.

\begin{proof}[Proof of Theorem \ref{liou_a>-1} part i)]
   If $\mathrm{deg}(u)=1$ then $u$ is an affine function with nodal set consisting of a hyperplane and up to translations and rotations we may assume that $u(x)=x_n$ and $Z(u)=\Sigma=\{x_n=0\}$. Let us consider $w_1,w_2$ the restrictions of $w$ to $\{x_n>0\}$ and $\{x_n<0\}$ and consider their symmetric extensions across $\Sigma$. Then \cite[Lemma B.2]{DelFelVit} implies that if there exist constants $C>0$ and $\gamma\geq0$ such that
\begin{equation*}
|w_i(x)|\leq C(1+|x|)^{\gamma}\qquad\mathrm{in \ } \R^n,
\end{equation*}
then $w_i$ are both polynomials $P_i$ of degree $k_i$ at most $\lfloor \gamma \rfloor$. Then, Lemma \ref{tracepolynomial} implies that $p_i(x')=P_i(x',0)$ are polynomials of degree $k_i$, but continuity of $w$ implies that $p_1=p_2=p$ and consequently $k_1=k_2=k\leq \lfloor \gamma \rfloor$. Moreover, $P_2(x',x_n)\equiv P_1(x',x_n)$. In fact if this is not the case, one would have $P_1-P_2$ is still a nontrivial $L_a$-harmonic polynomial of degree at most $k$ with zero trace and symmetric with respect to $\Sigma$, in contradiction with Corollary \ref{tracepolynomial2}.
\end{proof}

In the same spirit of Lemma \ref{l:quasiconformal-solution}, we start by showing a remarkable feature of solutions to \eqref{e:a-entire} in the case of weights that are harmonic polynomials.
\begin{Lemma}\label{c:conformal-polynomial}
Let $u:\R^2\to\R$ be an harmonic polynomial of degree $d$ and $\O$ be one connected component of $\R^2\setminus Z(u)$ such that $0\in\partial\O$ and its boundary is connected. Then, for $a>-\min\{1,2/d\}$, if $w\in H^1_\loc(\O,|u|^a)$ satisfies
$$
\int_{\O} |u|^a \nabla w \cdot \nabla \phi \,dx dy= 0,\qquad \mbox{for every }\phi \in C^\infty_c(\R^2),
$$
then $w(x,y) = \overline{w}\left(\overline{u}(x,y),u(x,y)\right)$ in $\overline{\O}$, where:
\begin{enumerate}
    \item[\rm{(i)}] $\overline{u}$ is an harmonic conjugate of $u$ in $B_1$ such that $\overline{u}(0)=0$;
    \item[\rm{(ii)}] $\overline{w} \in H^1_\loc(\{y>0\}, |y|^a)$ satisfies$$
\int_{\{y>0\}} |y|^a \nabla \overline{w} \cdot \nabla \varphi \, dx dy= 0,\qquad \mbox{for every }\varphi \in C^\infty_c(\R^2).
$$
\end{enumerate}
\end{Lemma}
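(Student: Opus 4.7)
The plan is to adapt the proof of Lemma \ref{l:quasiconformal-solution} to the entire setting, exploiting the fact that $u$ is a polynomial on the simply connected plane $\R^2$. First I would construct the global harmonic conjugate $\overline{u}$ of $u$, normalized by $\overline{u}(0)=0$; it is a harmonic polynomial of degree $d$. Then set $g(z) := -\overline{u}(z) + i\,u(z)$, so that $g$ is a complex polynomial in $z = x + iy$ of degree $d$, and the map $\Theta = (\overline{u}, u): \R^2 \to \R^2$ coincides (up to reflection in the second coordinate) with $g$; in particular, $g^{-1}(\{y>0\}) = \{u>0\}$. Up to replacing $w$ with $-w$ we may assume $u > 0$ on $\Omega$, so $\Theta(\Omega) \subseteq \{y > 0\}$.

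The key topological step is to show that $\Theta|_\Omega$ is a diffeomorphism onto $\{y>0\}$. As a polynomial of degree $d$, the map $g: \hat{\C} \to \hat{\C}$ is a $d$-sheeted branched covering. Each nodal component $\Omega$ of $\{u > 0\}$ maps onto $\{y>0\}$ via $g|_\Omega$ as a branched covering of some integer degree $k_\Omega \geq 1$, with $\sum k_\Omega = d$ summed over all components. Near the pole at infinity $g$ behaves like $c\,z^d$, so the $2d$ preimage sectors of $\{y>0\}$ and $\{y<0\}$ alternate cyclically, and $\partial \Omega$ meets infinity along exactly $2 k_\Omega$ arcs; if $k_\Omega \geq 2$, then in $\R^2$ the boundary $\partial \Omega$ necessarily splits into at least two disjoint connected pieces. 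The standing hypothesis that $\partial \Omega$ is connected in $\R^2$ therefore forces $k_\Omega = 1$. Since additionally no critical point of $g$ lies in the interior of such an $\Omega$, the map $\Theta|_\Omega$ extends to a homeomorphism $\overline \Omega \to \{y \geq 0\}$ that is smooth away from the finitely many critical points of $u$ on $\partial\Omega$.

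Finally I would perform the change of variables exactly as in the proof of Lemma \ref{l:quasiconformal-solution}, specialized to $A = \mathbb I$ so that the matrix $B$ in \eqref{e:matrix-theta} reduces to the identity. The identity $|\det D\Theta| = |\nabla u|^2$, inherited from the Cauchy--Riemann equations, guarantees that the pullback $\overline{w} := w \circ \Theta^{-1}$ satisfies $\|\overline{w}\|_{H^1(K,|y|^a)} \leq C\|w\|_{H^1(\Theta^{-1}(K),|u|^a)}$ on every compact $K \subset \{y>0\}$; the local integrability of the weights needed for this change of variables is provided by Lemma \ref{l:a-N0} under the assumption $a > -\min\{1,2/d\}$. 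Testing the equation satisfied by $w$ against $\phi = \varphi \circ \Theta$ for $\varphi \in C^\infty_c(\R^2)$ (a dense test class in the energy space by Proposition \ref{p:H=W}) then yields $\int_{\{y>0\}} |y|^a \nabla\overline{w}\cdot\nabla\varphi \, dx\,dy = 0$, as required.

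The main obstacle is the topological analysis in the second paragraph: deducing $k_\Omega = 1$ from the connectedness of $\partial \Omega$ in $\R^2$ via the structure of $g$ at infinity. Once this rigidity is established, the remaining steps are a direct adaptation of the hodograph computation of Lemma \ref{l:quasiconformal-solution} from the bounded local setting $B_1$ to the global one on $\R^2$.
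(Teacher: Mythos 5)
Your proposal is correct and follows the paper's overall strategy --- reduce to Lemma \ref{l:quasiconformal-solution} via the hodograph map $\Theta=(\overline u,u)$ and the change of variables with $B=\mathbb I$ --- but the key rigidity step, namely that $\Theta$ maps $\overline\Omega$ bijectively onto $\{y\geq 0\}$, is established by a genuinely different argument. The paper argues locally: a critical point of $u$ inside $\Omega$ would produce a sublevel set whose closure is compactly contained in $\Omega$, hence an extra connected component of $\partial\Omega$; and $\Theta$ is injective on $S(u)\cap\partial\Omega$ because $\overline u(z_0)=\int_I (J\nabla u)(\gamma)\cdot\gamma'\,dt\neq 0$ along the nodal arc joining a boundary singular point $z_0$ to the origin. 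You instead exploit the global structure of the polynomial $g=-\overline u+iu$ as a $d$-sheeted branched covering, identify the covering degree $k_\Omega$ of $g|_\Omega$ over the half-plane with the number of sectors through which $\Omega$ reaches infinity, and deduce $k_\Omega=1$ from the connectedness of $\partial\Omega$. This buys global injectivity and surjectivity in one stroke, and the absence of interior critical points then comes for free (a degree-one branched covering has no branch points --- it is a consequence of $k_\Omega=1$, not an ``additional'' hypothesis as your phrasing suggests), whereas the paper obtains these facts more implicitly. The one step you should flesh out is the implication ``$k_\Omega\geq 2\Rightarrow\partial\Omega$ disconnected in $\R^2$'': the $2k_\Omega$ boundary arcs escaping to infinity could a priori all be joined through nodal arcs in the finite plane, so a separation argument is needed (e.g.\ a path in $\Omega$ joining two distinct ends, closed up through the positive sectors at infinity, separates the sphere and leaves boundary arcs of $\Omega$ adjacent to the intervening negative sectors on both sides). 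Two minor points: to reduce to $u>0$ on $\Omega$ you should replace $u$ by $-u$ (which leaves the weight $|u|^a$ unchanged), not $w$ by $-w$; and the smooth extension of $\Theta^{-1}$ up to the boundary, away from the finitely many images of singular points, deserves a word, which is exactly what the paper's integration-along-nodal-arcs computation supplies.
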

\begin{proof}
Since the proof coincides, except for minor changes, with that of Lemma \ref{l:quasiconformal-solution}, we omit some details. First, we stress that if $\Omega$ is an open connected component of $\{u\neq 0\}$ with a connected boundary, it implies that there cannot be critical points in $\Omega$. Indeed, the existence of a possible critical point \( z_0 \in \Omega \) implies the existence of an unbounded sublevel associated to the critical value \( u(z_0) \), where the polynomial \( u \) takes values in \( (0,u(z_0)) \). On the other hand, since the closure of such sublevel is strictly contained in \( \Omega \), this would imply the existence of an additional connected component of $\partial \O$, contradicting the initial assumptions.\\

\noindent Given $u$ a harmonic polynomial of degree $d\geq 2$, consider $\overline{u}$ to be its harmonic conjugate satisfying
\be\label{e:daintegrare}
\overline{u}(0)=0\qquad\mathrm{and}\qquad \nabla \overline{u} = J \nabla u.
\ee
Thus, in view of the Cauchy-Riemann equations, if we set
$\Theta(x,y) := \left(\overline{u}(x,y),u(x,y)\right)$,
by direct computation we get that
$$
|\nabla u|^2 \equiv  |\nabla \overline{u}|^2 \equiv  |\mathrm{det} D\Theta|,\qquad \nabla u\cdot \nabla \overline{u}\equiv 0.
$$
Therefore, the set of critical points of \( u \) coincides with that of \( \overline{u} \), and it follows that
$$
\overline{u}(z_0)\neq 0,\qquad \mbox{for every }z_0 \in (S(u)\cap \partial\O)\setminus \{0\}.
$$
Indeed, let $\gamma \colon I \to \R$ be a parametrization of the nodal line connecting $z_0\in (S(u)\cap \partial\O)\setminus \{0\}$ to the origin. Then, since $\nabla u(\gamma(t))\cdot \gamma'(t)=0$ for every $t \in I$, by integrating \eqref{e:daintegrare} we get
$$
\overline{u}(z_0)
= \int_{I} (J\nabla u)(\gamma(t))\cdot\gamma'(t) dt \neq 0.
$$
Similarly, one can see that at every singular point in $S(u)\cap \partial\O$, the function $\overline{u}$ attains distinct values, that is the restriction of $\Theta$ on $S(u)\cap \partial\O$ is injective. Therefore, although $S(u)\cap \partial\Omega$ is the critical set of the map \( \Theta \colon \overline{\Omega} \to \{ y \geq 0 \} \), it is possible to construct an inverse map \( \Theta^{-1} \colon \{ y \geq 0 \} \to \overline{\Omega} \) by associating each element of $\Theta(S(u)\cap \partial\Omega)$ with its corresponding singular point of $u$. Therefore, we have shown that $
\Theta\colon \overline{\O} \to \{y\geq 0\}$ is conformal and finally the proof follows as in Lemma \ref{l:quasiconformal-solution}.
\end{proof}

\begin{proof}[Proof of Theorem \ref{liou_a>-1} part ii)]
We split the proof into two steps.\\

Step 1: working on a nodal domain with connected boundary. Let $u$ be a harmonic polynomial of degree $d\geq 2$ and $\Omega$ be a connected component of $\R^2 \setminus Z(u)$ whose boundary is connected with $0\in\partial \O$. Then, by applying Lemma \ref{c:conformal-polynomial} to $w$, we can construct  $\overline{w}\in H^1_\loc(\{y>0\},|y|^a)$ satisfying
$$
\int_{\{y>0\}} |y|^a \nabla \overline w \cdot \nabla \varphi \, dx dy= 0,\qquad \mbox{for every }\varphi \in C^\infty_c(\R^2),
$$
and
$$
|\overline{w}(x,y)|\leq \tilde{C}(1+|(x,y)|)^{\overline{\gamma}} \quad \mbox{in }\{y>0\},\qquad \mbox{with }\overline{\gamma}=\frac{\gamma}{d}.
$$
Notice that the growth condition above follows by exploiting the behaviour of $u$ and its harmonic conjugate at infinity. Therefore, by applying part i) of Theorem \ref{liou_a>-1}, we deduce that $\overline w$ coincides with a polynomial $P$ of degree at most $\lfloor \gamma/d\rfloor$ and symmetric with respect to $y$-variable.
Finally, in view of Lemma \ref{c:conformal-polynomial}, we infer that
$$
w(x,y)  =  P(\overline{u}(x,y),u(x,y)), \quad \mbox{in }\Omega,
$$
with $\overline{u}$ the harmonic conjugate of $u$ such that $\overline{u}(0)=0$.\\

Step 2: extending the Liouville theorem by unique continuation principle. Up to translation, it is not restrictive to suppose that $0 \in R(u)\cap \partial \Omega$ and localize the problem near $\partial\Omega$, which is connected and piece-wise real analytic curve. Now, consider $$V(x,y):=w(x,y)-P(\overline{u}(x,y),u(x,y))$$ which still solves \eqref{e:a-entire} in the sense of Definition \ref{definition.energy.a}. Let us consider Fermi coordinates around $0$ as in \cite[Section 1.1]{SirTerVit2}. Being the curve locally real analytic, after composition $W=V\circ\Phi$ with the change of variable, we end up with a solution in $H^{1,a}(B_1)$ to $\mathrm{div}(|y|^a A\nabla W)=0$ in $B_1$, with $a>-1$ and $A(x,y)$ symmetric, uniformly elliptic, with real analytic entries of the form \eqref{e:matrice-dopo}. Denoting by $B_1^-$ the half ball where $W\equiv 0$, then on the complementary half ball $B_1^+$
\begin{equation*}
\mathrm{div}\left(y^{a}A\nabla W\right)=0 \quad\mathrm{in \ } B_1^+,\qquad
y^{a}A\nabla W\cdot  e_{y}=0 \quad\mathrm{in \ } B_1',\qquad
W=0 \quad\mathrm{in \ } B_1'.
\end{equation*}
Then Proposition \ref{traceuniquecontinuation} implies $W\equiv 0$ in $B_1^+$ too.
\end{proof}

\begin{remark}\label{rem:conn_bdry}
Disregarding the  unique continuation principle and going back to Step 1, we infer the validity of the Liouville type Theorem \ref{liou_a>-1} for solutions on a nodal component of $u$ having a connected boundary and satisfying the growth assumptions there.
\end{remark}


\begin{thebibliography}{99}

\bibitem{Aless1}
G. Alessandrini, R. Magnanini. \textsl{Elliptic equations in divergence form, geometric critical points
of solutions and Stekloff eigenfunctions}. SIAM J. Math. Anal., 25 (5) (1994), 1259-1268.



\bibitem{Anc78}
A. Ancona. \textsl{Principe de Harnack \`a la fronti\`ere et th\'{e}or\`eme de Fatou pour un
   op\'{e}rateur elliptique dans un domaine lipschitzien}. Ann. Inst. Fourier (Grenoble) 28 (4) (1978), 169-213.

\bibitem{quasibook}
K. Astala, T. Iwaniec, G. Martin. \textsl{Elliptic Partial Differential Equations and Quasiconformal Mappings in the Plane}. (PMS-48), Princeton University Press, 2009.


\bibitem{BanBasBur91}
R. Ba\~{n}uelos, R. Bass, K. Burdzy. \textsl{H\"{o}lder domains and the boundary Harnack principle}. Duke Math. J. 64 (1) (1991), 195-200.




\bibitem{BasBur91}
R. Bass, K. Burdzy. \textsl{A boundary Harnack principle in twisted H\"{o}lder domains}. Ann. of Math. 134 (2) (1991), 253-276.




\bibitem{BMMTV}
G.~Buttazzo, F.P.~Maiale, D.~Mazzoleni, G.~Tortone, B.~Velichkov. \textsl{Regularity of the optimal sets for a class of integral shape functionals.} accepted on Arch. Rational Mech. Anal. (2024).


\bibitem{CheNabVal}
J. Cheeger, A. Naber, D. Valtorta. \textsl{ Critical sets of elliptic equations}. Comm. Pure Appl. Math., 68-2 (2015), 173-209.

\bibitem{Dal77}
B. Dahlberg. \textsl{Estimates of harmonic measure}. Arch. Rational Mech. Anal. 65 (3) (1977), 275-288.


\bibitem{DelFelVit}
A. De Luca, V. Felli, S. Vita. \textsl{Strong unique continuation and local asymptotic at the boundary for fractional elliptic equations}. Adv. Math., 400 (2022), 1-67.

\bibitem{DePSV}
G. De Philippis, L. Spolaor, B. Velichkov.
\textsl{(Quasi-)conformal methods in two-dimensional free boundary problems}. J. Eur. Math. Soc. doi:10.4171/JEMS/1435


\bibitem{DesSav}
D. De Silva, O. Savin. \textsl{A note on higher regularity boundary Harnack inequality}. Discrete Contin. Dyn. Syst., 35(12), (2015) 6155-6163.

\bibitem{DesSav2}
D. De Silva, O. Savin. \textsl{A short proof of boundary Harnack inequality}. J. Differ. Equations 48; (4242), 463-464.

\bibitem{GarLin}
N. Garofalo, F.H. Lin. \textsl{Monotonicity properties of variational integrals, Ap weights and unique continuation}. Indiana Univ. Math. J. 35 (1986), 245-268.


\bibitem{Han}
Q. Han. \textsl{Singular sets of solutions to elliptic equations}. Indiana Univ. Math. J. 43, 1994, pp. 983-1002.

\bibitem{Han2}
Q. Han. \textsl{Singular sets of harmonic functions in $\R^2$ and their complexifications in $\C^2$}. Indiana Univ. Math.
J. 53, 1365-1380 (2004)

\bibitem{HanLin2}
Q. Han, F. Lin. \textsl{Nodal sets of solutions of elliptic differential equations}. unpublished manuscript, (2013).

\bibitem{HarSim}
R. Hardt, L. Simon. \textsl{Nodal sets for solutions of elliptic equations}. J. Differential Geom. 30, 1989, 505-522.

\bibitem{HartWint}
P. Hartman, A. Wintner. \textsl{On the local behavior of solutions of non-parabolic partial differential equations.} Amer. J. Math. 75-3 (1953), 449-476.

\bibitem{viennesi}
B. Helffer, T. Hoffmann-Ostenhof, S. Terracini. \textsl{Nodal domains and spectral minimal partitions.} Ann. Inst. H.
Poincar\'e Anal. Non Lin\'eaire 26-1 (2009), 101-138

\bibitem{JeonVita}
S. Jeon and S. Vita. \textsl{Higher order boundary Harnack principles in Dini type domains.} J. Differential Equations 412 (2024), 808-856. 

\bibitem{JerKen}
D. Jerison and C. Kenig. \textsl{Boundary behavior of harmonic functions in nontangentially accessible domains}. Adv. in Math. 46 (1) (1982), 80-147.

\bibitem{Kem72}
J. Kemper. \textsl{A boundary Harnack principle for Lipschitz domains and the
   principle of positive singularities}. Comm. Pure Appl. Math. 25 (1972), 247-255.


\bibitem{Kuk}
T. Kukuljan. \textsl{Higher order parabolic boundary Harnack inequality in $C^1$ and $C^{k,\alpha}$ domains}. Discrete Contin. Dyn. Syst., (2022), 42 (6), 2667-2698.

\bibitem{Lin}
F. Lin. \textsl{Nodal sets of solutions of elliptic and parabolic equations}. Comm. Pure Appl. Math., 44-3 (1991), 287-308.

\bibitem{LinLin}
F. Lin, Z. Lin. \textsl{Boundary Harnack principle on nodal domains}. Sci. China Math., 63-12 (2022), 2441-2458.

\bibitem{LogMal1}
A. Logunov, E. Malinnikova. \textsl{On ratios of harmonic functions}. Adv. Math. 274 (2015), 241-262.

\bibitem{LogMal2}
A. Logunov, E. Malinnikova. \textsl{Ratios of harmonic functions with the same zero set}. Geom. Funct. Anal., 26-3 (2016), 909-925.


\bibitem{MaiTorVel}
P.F. Maiale, G. Tortone and B. Velichkov. \textsl{The boundary Harnack principle on optimal domains}. Ann. Sc. Norm. Super. Pisa Cl. Sci. (2024), 25(1), 127--149


\bibitem{MazTerVel1} D. Mazzoleni, S. Terracini, B. Velichkov, {\sl Regularity of the optimal sets for spectral functionals}, Geom. Funct. Anal. 27 (2017), no. 2, 373-426

\bibitem{MazTerVel2}D. Mazzoleni, S. Terracini, B. Velichkov,   {\sl Regularity of the free boundary for the vectorial Bernoulli problem}, Anal. PDE 13 (2020), no. 3, 741-764

\bibitem{NabVal}
A. Naber, D. Valtorta. \textsl{Volume estimates on the critical sets of solutions to elliptic PDEs}. Comm. Pure Appl. Math., 70-10 (2017), 1835-1897.


\bibitem{Sim}
L. Simon. \textsl{Schauder estimates by scaling}. Calc. Var. Partial Differential Equations, 5 (1997), 391-407.

\bibitem{SirTerTor}
Y. Sire, S. Terracini, G. Tortone. \textsl{On the nodal set of solutions to degenerate or singular elliptic equations with an application to $s$-harmonic functions}. J. Math. Pures Appl., 143 (2020), 376-441.

\bibitem{SirTerVit1}
Y. Sire, S. Terracini, S. Vita. \textsl{Liouville type theorems and regularity of solutions to degenerate or singular problems part I: even solutions}. Comm. Partial Differential Equations, 46-2 (2021), 310-361.

\bibitem{SirTerVit2}
Y. Sire, S. Terracini, S. Vita. \textsl{Liouville type theorems and regularity of solutions to degenerate or singular problems part II: odd solutions}. Math. Eng., 3-1 (2021), 1-50.

\bibitem{TerTorVit1}
S. Terracini, G. Tortone, S. Vita. \textsl{Higher order boundary Harnack principle via degenerate equations}.  Arch. Ration. Mech. Anal. 248-2  (2024).

\bibitem{TerTorVit2}
S. Terracini, G. Tortone, S. Vita. \textsl{A priori H\"older estimates for equations degenerating on nodal sets}. Preprint

\bibitem{Zhang1}
C. Zhang. \textsl{On higher order boundary Harnack and analyticity of free boundaries.} Nonlinear Anal. 240 (2024) 113471.



\end{thebibliography}
\end{document}